\documentclass[11pt,leqno]{article}

\usepackage{amsfonts,latexsym,amsmath,amssymb,amsthm}
\usepackage{fullpage}
\usepackage{hyperref}
\hypersetup{hidelinks}
\usepackage{setspace}

\newtheorem{theorem}{Theorem}[section]
\newtheorem{lemma}[theorem]{Lemma}
\newtheorem{proposition}[theorem]{Proposition}
\newtheorem{corollary}[theorem]{Corollary}

\numberwithin{equation}{section}

\newcommand{\ls}{\leqslant}
\newcommand{\gr}{\geqslant}
\newcommand{\E}{\mathbb{E}}

\newcommand{\R}{\mathbb{R}}

\newcommand{\conv}{{\rm conv}}

\begin{document}
\small

\title{\bf Moment comparisons, Sudakov inequalities and entropy of centroid bodies}
\author{Antonios Hmadi and Dimitrios-Marios Liakopoulos}
\date{}
\maketitle

\begin{abstract}\footnotesize
Let $X$ be an isotropic log-concave random vector in $\R^n$ and let $G$ be standard Gaussian. 
Starting from the first moment comparison for gauges, we derive corresponding estimates at arbitrary moment orders. 
For every gauge $\phi$ and every $q\gr 1$,
$$ \|\phi(G)\|_q\ls C\sqrt{\ln(en)+q}\,\|\phi(X)\|_q,\qquad \|\phi(X)\|_q\ls C\left(\sqrt{\ln(en)}+\psi(X)\sqrt q\right)\|\phi(G)\|_q. $$
Applied to support functions, this gives the sharp worst case order $C\sqrt{\ln(en)}$ for the $L_2$-Sudakov constant and quantitative $L_p$-Sudakov estimates. 
Combining, at each level of Lata\l a's dyadic chain, the strongest of the three quantitative $L_p$-Sudakov estimates used here yields
$$ \left(\E\|Y\|^p\right)^{1/p}\ls C\left(n^{1/4}\sqrt{\ln(en)}\,\ln(e+\ln(en))\,\E\|X\|+\sigma_p(Y)\right) $$
whenever the weak moments of $Y$ are dominated by those of $X$. 
In a second direction, we study the generalized dual Sudakov problem for the self generated metrics associated with $Z_r(X)^\circ$ and prove dimension free packing estimates for $Z_p(X)$. 
We also obtain mean norm estimates for centroid bodies, factorization through arbitrary symmetric convex bodies and an affine dimensional refinement.
\end{abstract}

%%%%%%%%%%%%%%%%%%%%%%%%%%%%%%%%%%%%%%%%%%%%%%%%%%%%%%%%%%%%%%%%%%%%%%%%%%%%%%%%%%%%%%%%%%%%%%%%%%%%%%%%%%%%%%%%%%%%%%%%%%%%%%%%%%%%%%
\section{Introduction}
%%%%%%%%%%%%%%%%%%%%%%%%%%%%%%%%%%%%%%%%%%%%%%%%%%%%%%%%%%%%%%%%%%%%%%%%%%%%%%%%%%%%%%%%%%%%%%%%%%%%%%%%%%%%%%%%%%%%%%%%%%%%%%%%%%%%%%

Sudakov's minoration~\cite{Sudakov-1969} bounds the expected supremum of a Gaussian process from below in terms of metric entropy.
Its extensions to log-concave vectors are linked to comparison inequalities for gauges, to $L_p$-Sudakov minoration and to the relation between weak and strong moments; see~\cite{Latala-Sudakov-1997,Latala-Sudakov-2014,Latala-Problems-2017}.
We consider three related questions.
The first is whether the comparison between Gaussian and log-concave expectations of gauges extends to arbitrary moments.
The second is what quantitative $L_2$- and $L_p$-Sudakov estimates follow from such a comparison and what they imply for weak and strong moments.
The third concerns the generalized dual Sudakov problem for the self generated metrics associated with polar centroid bodies.

For isotropic log-concave vectors, Eldan and Lehec~\cite{Eldan-2013,Eldan-Lehec-2014} proved the upper comparison of expectations of gauges with Gaussian expectations, while Bizeul and Klartag~\cite{Bizeul-Klartag-2025} proved the reverse comparison.
Together with Letwin's estimate, which makes the constants absolute~\cite{Letwin-2026}, these results give the first moment comparison recorded by Bizeul~\cite[Theorem~1.2]{Bizeul-MMstar-2026}.
Recall that a gauge is a nonnegative convex positively homogeneous function.
A random vector is unconditional if its distribution is invariant under all coordinatewise sign changes.
For an isotropic log-concave vector $X$, let $\psi(X)$ denote its Poincar\'e constant, with the normalization fixed in Section~\ref{section:preliminaries}.

\begin{theorem}\label{th:intro-moment-comparison}
Let $n\gr2$, let $X$ be isotropic and log-concave in $\R^n$, let $G$ be standard Gaussian, and let $\phi:\R^n\to[0,\infty)$ be a gauge.
\begin{enumerate}
\item[{\rm (i)}] For every $q\gr1$,
\begin{equation}\label{eq:reverse-moment-comparison}
\|\phi(G)\|_q\ls C\sqrt{\ln(en)+q}\,\|\phi(X)\|_q.
\end{equation}
\item[{\rm (ii)}] For every $q\gr1$,
\begin{equation}\label{eq:upper-moment-comparison}
\|\phi(X)\|_q \ls C\left(\sqrt{\ln(en)}+\psi(X)\sqrt q\right)\|\phi(G)\|_q.
\end{equation}
\item[{\rm (iii)}] Consequently,
$$ \frac{c}{\sqrt{\ln(en)}}\|\phi(G)\|_q \ls\|\phi(X)\|_q \ls C\sqrt{\ln(en)}\,\|\phi(G)\|_q $$
whenever
\begin{equation}\label{eq:two-sided-moment-range}
1\ls q\ls \frac{c\ln(en)}{\psi(X)^2}.
\end{equation}
In particular, the conclusion holds uniformly over $X$ for $1\ls q\ls c\sqrt{\ln(en)}$.
\end{enumerate}
\end{theorem}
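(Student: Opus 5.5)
The plan is to start from the first moment comparison (Bizeul's Theorem 1.2, combining Eldan--Lehec, Bizeul--Klartag and Letwin):
$$c\,\E\phi(G)/\sqrt{\ln(en)}\ls \E\phi(X)\ls C\sqrt{\ln(en)}\,\E\phi(G)$$
for every gauge $\phi$. We will then pass to $q$-th moments using concentration of $\phi(G)$ and $\phi(X)$ around their means. Throughout, let $L=\mathrm{Lip}(\phi)=\sup_{|\theta|=1}\phi(\theta)$. Since $\phi$ is a gauge, $\phi(x)\ls L|x|$, so $\phi$ is $L$-Lipschitz.

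\textbf{Part (i).} Gaussian concentration gives
$$\|\phi(G)\|_q\ls \E\phi(G)+C\sqrt q\,L.$$
The first term is at most $C\sqrt{\ln(en)}\,\E\phi(X)\ls C\sqrt{\ln(en)}\,\|\phi(X)\|_q$. For the second term it suffices to show $L\ls C\|\phi(X)\|_q$ for all $q\gr1$, and in fact $L\ls C\E\phi(X)$. Pick $\theta$ with $|\theta|=1$ and $\phi(\theta)=L$. By convexity and homogeneity there is a linear functional $\ell$ with $\ell\ls\phi$ and $\ell(\theta)=L$; take $\ell(x)=\langle x,u\rangle$ with $|u|\gr L$. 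Then
$$\E\phi(X)\gr\E\langle X,u\rangle_+ .$$
Since $X$ is isotropic, $\langle X,u\rangle$ is a centered one-dimensional log-concave variable with variance $|u|^2$, so $\E\langle X,u\rangle_+=\tfrac12\E|\langle X,u\rangle|\gr c|u|\gr cL$. Hence
$$\|\phi(G)\|_q\ls C\big(\sqrt{\ln(en)}+\sqrt q\big)\|\phi(X)\|_q\ls C'\sqrt{\ln(en)+q}\,\|\phi(X)\|_q .$$

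\textbf{Part (ii).} Use the concentration that comes from the Poincaré inequality. With the paper's normalization, the Gromov--Milman / Bobkov--Ledoux exponential concentration for Lipschitz functions gives
$$\|\phi(X)-\E\phi(X)\|_q\ls C\,\psi(X)\,q\,L .$$
This is linear in $q$, whereas the statement requires $\sqrt q$. I therefore expect the normalization in Section~2 to be chosen so that the available moment bound is $\|f(X)-\E f(X)\|_q\ls C\psi(X)\sqrt q\,\mathrm{Lip}(f)$, or else that for gauges the bound can be sharpened via Borell's lemma together with the variance bound $\mathrm{Var}\,\phi(X)\ls\psi^2L^2$. This is the main obstacle. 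My first attempt would be to use the paper's stated form of the Poincaré constant directly; if only the linear-in-$q$ bound is available, I would try to absorb the extra factor using $\psi(X)\gr c$ and the range of $q$ that matters. Granting the $\sqrt q$ bound,
$$\|\phi(X)\|_q\ls \E\phi(X)+C\psi(X)\sqrt q\,L\ls C\sqrt{\ln(en)}\,\E\phi(G)+C\psi(X)\sqrt q\,L .$$
Both $\E\phi(G)$ and $L$ are at most $C\|\phi(G)\|_q$; the bound on $L$ is the Gaussian case of the linear-functional argument in part (i). This gives (ii).

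\textbf{Part (iii).} The lower bound follows from (i): when $q\ls c\ln(en)$ we have $\sqrt{\ln(en)+q}\ls C\sqrt{\ln(en)}$, and since $\psi(X)\gr c$ the range condition \eqref{eq:two-sided-moment-range} forces $q\ls c\ln(en)$. The upper bound follows from (ii), because $\psi(X)\sqrt q\ls\sqrt{c\ln(en)}$ on that range. For the uniform statement, use $\psi(X)\ls C\,\ln(en)^{1/4}$, the current bound on the KLS constant (Klartag's $\sqrt{\ln n}$ bound on $\psi^2$). Then $q\ls c\sqrt{\ln(en)}$ implies $q\ls c\ln(en)/\psi(X)^2$, so the two-sided estimate holds uniformly over $X$.
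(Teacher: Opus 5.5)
Your part (i) is the paper's argument (a supporting functional $u$ with $|u|=L$, Borell's comparison to get $L\le C\,\E\phi(X)$, Gaussian concentration, then the reverse first moment comparison). Your part (iii) also follows the paper's reasoning.

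\textbf{The gap is in part (ii).} You correctly observe that the Poincar\'e inequality only yields $\|\phi(X)-\E\phi(X)\|_q\le C\psi(X)\,q\,L$. Neither of your proposed remedies works.
\begin{itemize}
\item The $\sqrt q$ bound you want to grant is false, even for isotropic log-concave vectors. Take $X$ with i.i.d.\ symmetric exponential coordinates of variance one and $\phi(x)=|x_1|$. Then $\psi(X)\simeq1$ and $L=1$, but $\|\phi(X)\|_q\simeq q$, which is not $O(\sqrt q)$.
\item Restricting the range of $q$ cannot help, because (ii) is asserted for every $q\ge1$.
\end{itemize}

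The missing step is on the Gaussian side. You only used $L\le C\|\phi(G)\|_q$. The supporting functional from part (i) gives much more:
$$\|\phi(G)\|_q\ge\|\langle G,u\rangle_+\|_q=2^{-1/q}\|\langle G,u\rangle\|_q\ge c\sqrt q\,L .$$
With this, the linear-in-$q$ Poincar\'e bound is already enough:
$$C\psi(X)\,q\,L=C\psi(X)\sqrt q\cdot\sqrt q\,L\le C'\psi(X)\sqrt q\,\|\phi(G)\|_q .$$
Combined with $\E\phi(X)\le C\sqrt{\ln(en)}\,\E\phi(G)\le C\sqrt{\ln(en)}\,\|\phi(G)\|_q$, this gives (ii). So the ``obstacle'' is resolved by the $\sqrt q$ growth of Gaussian moments of a linear minorant, not by sharper concentration for $X$.

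The paper does exactly this, phrased through the seminorm $N(x)=\phi(x)+\phi(-x)$. It uses $\|N(G)\|_q\ge c\sqrt q\,b_N$ and $\|N(G)\|_q\le 2\|\phi(G)\|_q$ for $q\ge2$, and handles $1\le q\le2$ by monotonicity. Applying the Bobkov--Ledoux moment bound to the Lipschitz function $\phi$ directly, as you set it up, works equally well. For $1\le q\le 2$ one can use $\|\phi(X)-\E\phi(X)\|_q\le\|\phi(X)-\E\phi(X)\|_2\le\psi(X)L$.

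\textbf{A smaller point in (iii).} The bound $\psi(X)\le C\ln^{1/4}(en)$ that you use is Letwin's estimate, as recorded in Section~2. Klartag's theorem gives $\psi(X)^2\le C\ln(en)$, i.e.\ $\psi(X)\le C\sqrt{\ln(en)}$, not a $\sqrt{\ln n}$ bound on $\psi^2$. With only Klartag's bound, the range $q\le c\ln(en)/\psi(X)^2$ would contain only $q$ of bounded size, not $q\le c\sqrt{\ln(en)}$. Your conclusion is right, but the input must be the $\ln^{1/4}$ bound.
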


The proof of Theorem~\ref{th:intro-moment-comparison} is direct once the first moment comparison is available.
Part~(i) controls the Euclidean Lipschitz constant of the gauge by $\E\phi(X)$ and then uses Gaussian concentration.
For $q\gr2$, part~(ii) follows from the Poincar\'e moment inequality applied to the seminorm $\phi(x)+\phi(-x)$, and the remaining orders follow from monotonicity.
The cube shows that the dimension and moment contributions in~\eqref{eq:reverse-moment-comparison} are both necessary.
For unconditional $X$, Theorem~\ref{th:intro-unconditional-moment-comparison} below gives, for every $q\gr1$,
$$ \frac{c}{\sqrt{\ln(en)+q}}\|\phi(G)\|_q \ls \|\phi(X)\|_q \ls C\sqrt{\ln(en)+q}\,\|\phi(G)\|_q. $$
The upper estimate uses Lata\l a's unconditional weak--strong moment theorem and the first moment comparison for the product exponential measure.
The isotropic cube and the isotropic product exponential measure show that the dimension and moment contributions have the correct order.

Theorem~\ref{th:intro-moment-comparison} should be distinguished from the mixed inequality of Bizeul~\cite[Theorem~1.3]{Bizeul-MMstar-2026}.
If $K$ is a centered convex body and $X_K$ is uniformly distributed on $K$, then $\bigl(\E h_K(G)\bigr)^2\ls2n\,\E h_K(X_K)$.
Here the same body determines both the support function and the probability measure.
Combined with the upper first moment comparison, this gives Bizeul's optimal estimate $M^*(K)\ls C\sqrt{n\ln(en)}$ in isotropic position.
In Theorem~\ref{th:intro-moment-comparison}, by contrast, the gauge and the isotropic log-concave vector are unrelated; the price in the upper comparison is the term $\psi(X)\sqrt q$.

\medskip 

Our first application concerns Sudakov minoration.
We write $N(A,B)$ for the least number of translates of $B$ needed to cover $A$.
The classical Gaussian Sudakov inequality states that, for every nonempty bounded $T\subseteq\R^n$ and every $\varepsilon>0$,
$$ \E\sup_{t\in T}\langle t,G\rangle\gr c\varepsilon\sqrt{\ln N(T,\varepsilon B_2^n)}. $$
For an isotropic log-concave vector $X$, let $C_X$ be the least constant such that
$$ \E\sup_{t\in T}\langle t,X\rangle\gr \frac{\varepsilon}{C_X}\sqrt{\ln N(T,\varepsilon B_2^n)} $$
for every such $T$ and $\varepsilon$.
Theorem~\ref{th:intro-moment-comparison}(i), applied at first moment to support functions, gives $C_X\ls C\sqrt{\ln(en)}$.
The isotropic cube gives the reverse lower bound in the worst case, and hence $\sup_X C_X\simeq\sqrt{\ln(en)},$ where the supremum runs over all isotropic log-concave vectors in $\R^n$.
This is an $L_2$-Sudakov statement because isotropy identifies the $L_2$ distance of the linear functionals with the Euclidean distance.
The $L_2$-Sudakov constant and its relation to polar centroid bodies were considered by Lata\l a in~\cite{Latala-Zp-2019}.
In the notation of that paper, $M_p(X)=Z_p(X)^\circ$, and~\cite[Proposition~14]{Latala-Zp-2019} states that
$$ N\left(Z_p(X)^\circ,\frac{eC_X}{\sqrt p}B_2^n\right)\ls e^p,\qquad p\gr2. $$
Thus the mechanism connecting $C_X$ with the entropy of $Z_p(X)^\circ$ is already known, while the estimate above identifies the sharp worst case order of $C_X$ for isotropic log-concave vectors.

\medskip 

We next turn to Lata\l a's $L_p$ formulation.
For a compact convex set $K\subseteq\R^n$, let $h_K(x)=\sup_{y\in K}\langle x,y\rangle$ be its support function.
If $K$ contains the origin in its interior, put $\|x\|_K=\inf\{t>0:x\in tK\}$ and $K^\circ=\{x:h_K(x)\ls1\}$.
We write $\conv A$ for the convex hull of a set $A$.
For $p\gr1$ and a nondegenerate random vector $X$ with finite $p$-th moments, let $Z_p(X)$ be the origin symmetric convex body whose support function is
$$ h_{Z_p(X)}(\theta)=\left(\E|\langle X,\theta\rangle|^p\right)^{1/p}. $$
For a random vector $X$, define $d_{X,p}(s,t)=\|\langle s-t,X\rangle\|_p$.
Following Lata\l a~\cite{Latala-Sudakov-2014}, we say that $X$ satisfies $\mathrm{SMP}_p(\alpha)$ if, whenever $T\subseteq\R^n$ is finite, $|T|\gr e^p$, and $d_{X,p}(s,t)\gr A$ for distinct $s,t\in T$, one has
$$ \E\sup_{s,t\in T}\langle t-s,X\rangle\gr\alpha A. $$
We write $\mathrm{SMP}(\alpha)$ if this holds for every $p\gr1$.
Lata\l a conjectured that every finite dimensional log-concave random vector satisfies $\mathrm{SMP}(c)$ with an absolute constant.
In general he proved $\mathrm{SMP}_p(c/\max\{p,2\})$ and, in dimension $n$, $\mathrm{SMP}(c/n)$; for symmetric vectors he also obtained an absolute constant once $p\gr2n\ln(n+e)$.

\begin{theorem}\label{th:intro-SMP-summary}
Let $X$ be isotropic and log-concave in $\R^n$, where $n\gr2$.
For every $2\ls p\ls n$,
\begin{equation}\label{eq:intro-SMP-summary}
X\text{ satisfies }\mathrm{SMP}_p\left(c\max\left\{\frac1p,\frac1{\sqrt{p\ln(en)}},\frac{\sqrt p}{\sqrt{n\ln(en)\ln(e+p)}}\right\}\right).
\end{equation}
Moreover, every nondegenerate $n$-dimensional log-concave random vector satisfies
\begin{equation}\label{eq:intro-global-SMP-summary}
\mathrm{SMP}\left(\frac{c}{\sqrt{n\ln(en)}}\right).
\end{equation}
\end{theorem}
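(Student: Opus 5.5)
We prove the three entries of the maximum in \eqref{eq:intro-SMP-summary} separately, since $\mathrm{SMP}_p(\alpha)$ for each candidate $\alpha$ gives it for the maximum. The first entry, $c/p$, is Lata\l a's general result~\cite{Latala-Sudakov-2014}, so only the other two need an argument. Throughout, $T$ is finite with $|T|\gr e^p$ and $d_{X,p}(s,t)\gr A$ for distinct $s,t\in T$.

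\emph{Second entry.} We reduce to the Euclidean $L_2$-Sudakov estimate. For isotropic log-concave $X$, Borell's lemma gives $\|\langle\theta,X\rangle\|_p\ls Cp\,|\theta|$, and one has $Z_p(X)\subseteq Cp B_2^n$. This alone would only yield $|s-t|\gr A/(Cp)$, so instead we work through the entropy. Fix $\varepsilon>0$ and set $T_0=T-T$. Then $\E\sup_{s,t}\langle t-s,X\rangle=\E h_{T_0}(X)$. By Theorem~\ref{th:intro-moment-comparison}(i) with $q=1$,
$$ \E h_{T_0}(G)\ls C\sqrt{\ln(en)}\,\E h_{T_0}(X). $$
Gaussian Sudakov then gives
$$ \E h_{T_0}(G)\gr c\varepsilon\sqrt{\ln N(T,\varepsilon B_2^n)}. $$
To obtain a $\sqrt p$ from the cardinality, we need that $T$ is $\varepsilon$-separated in $|\cdot|$ with $\varepsilon\simeq A/\sqrt p$. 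We do not have this directly, so we apply Theorem~\ref{th:intro-moment-comparison}(i) with $q=p$ to the linear gauge $|\langle\cdot,\theta\rangle|$, where $\theta=s-t$:
$$ \sqrt p\,|\theta|\simeq\|\langle G,\theta\rangle\|_p\gr\frac{c\sqrt p}{\sqrt{\ln(en)+p}}\cdots $$
This is the wrong direction. The correct direction comes from Theorem~\ref{th:intro-moment-comparison}(ii): $A\ls\|\langle X,\theta\rangle\|_p\ls C(\sqrt{\ln(en)}+\psi(X)\sqrt p)\sqrt p\,|\theta|$. Using the Poincar\'e bound $\psi(X)\ls C\sqrt{\ln(en)}$ (Klartag), we get $|\theta|\gr cA/(p\sqrt{\ln(en)})$. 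Feeding this into Sudakov yields only $cA/(\sqrt p\,\ln(en))$. To remove one logarithm we instead use Lata\l a's entropy estimate for $Z_p(X)^\circ$ and the known dual relation \cite[Proposition~14]{Latala-Zp-2019} with $C_X\ls C\sqrt{\ln(en)}$. Separation in $d_{X,p}$ means separation in $\|\cdot\|_{Z_p(X)^\circ}$-dual, i.e.\ in the norm $h_{Z_p(X)}$. Combining covering of $Z_p(X)$ by $e^p$ translates of $C\sqrt{p\ln(en)}\,B_2^n$ (the dual Sudakov direction) with Gaussian Sudakov at scale $\varepsilon=A/(C\sqrt{p\ln(en)})$ yields $\E h_{T_0}(X)\gr cA/\sqrt{p\ln(en)}$ after halving $T$ in a cell.

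\emph{Third entry.} Here we use the volumetric/$M^*$ approach. A $d_{X,p}$-separated set of size $e^p$ projects to a separated set in $Z_p(X)$-norm. Via Theorem~\ref{th:intro-moment-comparison}(i), the mean width of $\conv T$ controls $\E h_{T_0}(X)$. We then lower-bound the Gaussian mean width of a set that is $A$-separated in $h_{Z_p}$ by volume/dual Sudakov, using $Z_p(X)\supseteq c\sqrt{p/\ln(e+p)}$-type inclusions in $Z_p$ relative to $B_2^n$ on subspaces. We expect this step, which is where the factor $\sqrt{n\ln(e+p)}$ enters, to be the main obstacle.

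\emph{Global statement.} Given arbitrary $p$, we reduce to isotropic position by an affine map; $\mathrm{SMP}$ is invariant under this. If $p\ls 2$ or $p\gr n$, we use $c/p$ or the bound $\mathrm{SMP}(c/n)$ respectively; otherwise we use the maximum above. Optimizing gives $\min_p\max\{1/\sqrt{p\ln(en)},\sqrt p/\sqrt{n\ln(en)\ln(e+p)}\}\gr c/\sqrt{n\ln(en)}$. For $p>n$ the inequality $1/\sqrt{p\ln n}$ loses, so there we need $|T|\gr e^p$ with $p\gr n$, and apply the $p=n$ bound to a subset of size $e^n$, since $d_{X,n}\gr c(n/p)d_{X,p}$ holds by log-concavity.
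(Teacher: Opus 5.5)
Your argument for the second entry breaks down, and the correct one was already in front of you. You discard the separation $|s-t|\gr cA/p$ given by \eqref{eq:one-dimensional-moment-growth}, on the grounds that Euclidean separation of order $A/\sqrt p$ is needed. But Sudakov's bound is $\varepsilon\sqrt{\ln N}$. With $\varepsilon=cA/p$ and $\ln N(T,\varepsilon B_2^n)\gr\ln|T|\gr p$, it already gives $\E\sup_{t\in T}\langle t-t_0,G\rangle\gr cA/\sqrt p$, and the reverse comparison \eqref{eq:gauge-comparison} then yields exactly $cA/\sqrt{p\ln(en)}$. This is the paper's proof.

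Your replacement needs a covering of $Z_p(X)$ by $e^{p}$ translates of $C\sqrt{p\ln(en)}\,B_2^n$ (dually, of a Euclidean ball by translates of a multiple of $Z_p(X)^\circ$), which you attribute to Lata\l a's Proposition~14. That proposition goes the other way. It covers $Z_p(X)^\circ$ by Euclidean balls of radius $eC_X/\sqrt p$. By duality of entropy, this says that a Euclidean ball of radius of order $\sqrt p/C_X$ is covered by $e^{Cp}$ translates of $Z_p(X)$, i.e.\ that $Z_p(X)$ is large, not small. Nothing in the proposal supplies the covering you use. The bookkeeping of the last step also fails to give the stated bound. A cover with $e^p$ cells leaves no entropy from a set of size $e^p$. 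With entropy of order $p$ at scale $A/(C\sqrt{p\ln(en)})$ you would get $cA/\ln(en)$, far more than the theorem claims. With entropy of order one you would get only $cA/(\sqrt p\,\ln(en))$.

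For the third entry you give no argument, and the ingredient you suggest (Euclidean balls inside $Z_p(X)$ on subspaces) is of the wrong type. The missing idea is to factor through a Euclidean ball and use an upper bound on the mean width of $Z_p(X)$. Put $K=\conv(T-T)$; the set $T-t_0\subseteq K$ is $\frac A2Z_p(X)^\circ$-separated. Hence, by the Sudakov and dual Sudakov inequalities and $\ell(Z_p(X)^\circ)=\ell^*(Z_p(X))$,
$p\ls\ln N(K,\frac A4Z_p(X)^\circ)\ls\ln N(K,rB_2^n)+\ln N(rB_2^n,\frac A4Z_p(X)^\circ)\ls C\bigl(\ell^*(K)^2/r^2+\ell^*(Z_p(X))^2r^2/A^2\bigr)$.
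Optimizing in $r$ gives $p\ls C\ell^*(K)\ell^*(Z_p(X))/A$. Then $\ell^*(K)\ls C\sqrt{\ln(en)}\,\E h_K(X)$, together with $\ell^*(Z_p(X))\ls C\sqrt{np\ln(e+p)}$ from Lemma~\ref{lem:full-range-centroid-width}, yields the third entry.

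Finally, your global argument fails for $p>n$. $\mathrm{SMP}(c/n)$ is too weak, and passing to $d_{X,n}\gr c(n/p)\,d_{X,p}$ loses a factor $n/p$ that is unbounded as $p\to\infty$. The paper instead symmetrizes, $Y=(X-X')/\sqrt2$. It then uses Lata\l a's result that a symmetric $d$-dimensional log-concave vector satisfying $\mathrm{SMP}_d(\alpha)$ satisfies $\mathrm{SMP}_p(\alpha/8)$ for every $p\gr d$ (Lemma~\ref{lem:Latala-tools}(3)), and transfers back with Lemma~\ref{lem:Latala-tools}(1). The same symmetrization, together with linear invariance, reduces an arbitrary nondegenerate log-concave vector to the isotropic symmetric case.
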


The first term in~\eqref{eq:intro-SMP-summary} is the general estimate of Lata\l a.
The second term follows from the Gaussian Sudakov minoration and Theorem~\ref{th:intro-moment-comparison}; it improves $1/p$ when $p\gr C\ln(en)$.
The third term uses the mean width estimate for $Z_p(X)$ of Giannopoulos, Pafis and Tziotziou~\cite{Giannopoulos-Pafis-Tziotziou-2026} and improves the second term when $p^2\gr Cn\ln(e+p)$.
The global estimate~\eqref{eq:intro-global-SMP-summary} improves Lata\l a's general dimensional bound $c/n$ to $c/\sqrt{n\ln(en)}$.
For very large $p$, Lata\l a's exponential in $p/n$ estimate is stronger.
Thus Theorem~\ref{th:intro-SMP-summary} gives quantitative improvements in intermediate ranges but does not resolve the conjectural dimension free $\mathrm{SMP}(c)$.

\medskip 

The scale dependence in Theorem~\ref{th:intro-SMP-summary} is useful for the weak--strong problem.
For a norm $\|\cdot\|$, let $\|\cdot\|_*$ denote its dual norm and, for a random vector $Y$ and $p\gr1$, put $\sigma_p(Y)=\sup_{\|t\|_*\ls1}\|\langle t,Y\rangle\|_p$.
Lata\l a conjectured~\cite{Latala-weak-strong-2011} that, for every log-concave random vector $X$ in $\R^n$, every norm on $\R^n$ and every $p\gr1$,
$$ \left(\E\|X\|^p\right)^{1/p}\ls C\left(\E\|X\|+\sigma_p(X)\right) $$
with an absolute constant $C$.
To the best of our knowledge, the conjecture is known in several special situations, but remains open for arbitrary log-concave vectors and arbitrary norms.
A general dimension dependent estimate follows from the connection with Sudakov minoration established by Lata\l a~\cite[Corollary~6.4]{Latala-Sudakov-2014}.
More precisely, if $X$ satisfies $\mathrm{SMP}(\kappa)$ and the weak moments of $Y$ are dominated by those of $X$, then
$$ \left(\E\|Y\|^p\right)^{1/p}\ls C\left(\frac1\kappa\max\left\{1,\ln\left(\frac{en}{p}\right)\right\}\E\|X\|+\sigma_p(Y)\right). $$
Since every $n$-dimensional log-concave random vector was known to satisfy $\mathrm{SMP}(c/n)$, this gives in general a coefficient of order
$$ n\max\left\{1,\ln\left(\frac{en}{p}\right)\right\} $$
in front of the first moment.
Using only the global estimate $\mathrm{SMP}(c/\sqrt{n\ln(en)})$ from Theorem~\ref{th:intro-SMP-summary} in the same argument would reduce this coefficient to order
$$ \sqrt{n\ln(en)}\max\left\{1,\ln\left(\frac{en}{p}\right)\right\}. $$
Our next result instead retains, at each successive dyadic level, the strongest among the three estimates in~\eqref{eq:intro-SMP-summary} and gives a further improvement.

\begin{theorem}\label{th:intro-weak-strong}
Let $X$ be isotropic and log-concave in $\R^n$, and let $Y$ be a random vector satisfying
\begin{equation}\label{eq:weak-domination}
\|\langle t,Y\rangle\|_q\ls\|\langle t,X\rangle\|_q \qquad(t\in\R^n,\ q\gr1).
\end{equation}
Then, for every norm $\|\cdot\|$ on $\R^n$ and every $p\gr2$,
\begin{equation}\label{eq:weak-strong-scale-dependent}
\left(\E\|Y\|^p\right)^{1/p} \ls C\left(n^{1/4}\sqrt{\ln(en)}\,\ln(e+\ln(en))\,\E\|X\|+\sigma_p(Y)\right).
\end{equation}
In particular,
\begin{equation}\label{eq:weak-strong-X}
\left(\E\|X\|^p\right)^{1/p} \ls C\left(n^{1/4}\sqrt{\ln(en)}\,\ln(e+\ln(en))\,\E\|X\|+\sigma_p(X)\right).
\end{equation}
\end{theorem}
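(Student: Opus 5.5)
We follow Lata\l a's chaining route from Sudakov minoration to weak--strong moments~\cite[Section~6]{Latala-Sudakov-2014}, using the level-dependent constants of Theorem~\ref{th:intro-SMP-summary} in place of a single global $\mathrm{SMP}(\kappa)$.

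First we reduce. Since $\sigma_p(Y)\gr\sigma_2(Y)$ and, by $\E\|X\|\gr c\sigma_1(X)$, the estimate for $p>n$ follows from the case $p=n$ together with Paouris-type bounds for large $p$, it suffices to treat $2\ls p\ls n$. Write $\|Y\|=\sup_{t\in T}\langle t,Y\rangle$ with $T$ the dual unit ball. We then run Lata\l a's dyadic decomposition: for $p_k=2^k p$, with $k=0,1,\dots,K$ and $p_K\simeq n$, choose nets $T_k\subseteq T$ that are $d_{X,p_k}$-separated at the level where $|T_k|\ls e^{p_k}$. Lata\l a's argument controls $(\E\|Y\|^p)^{1/p}$ by $\sigma_p(Y)$ plus a sum over levels. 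The contribution of level $k$ is at most $C\kappa_k^{-1}\E\|X\|$, where $\kappa_k$ is any constant for which $X$ satisfies $\mathrm{SMP}_{p_k}(\kappa_k)$. The hypothesis~\eqref{eq:weak-domination} is used exactly as in Lata\l a's argument: it transfers the $d_{Y,q}$-oscillation bounds (Chebyshev together with a union bound over $e^{p_k}$ points) to $d_{X,q}$ distances. The top level contributes the term with $\ln(en/p)$, which is absorbed once $p_K\simeq n$. The result should therefore be
$$\left(\E\|Y\|^p\right)^{1/p}\ls C\Bigl(\sum_{k}\frac{1}{\kappa_{p_k}}\,\E\|X\|+\sigma_p(Y)\Bigr),$$
and I would re-derive this inequality carefully from~\cite{Latala-Sudakov-2014}, checking that it holds with scale-dependent constants.

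Next we insert~\eqref{eq:intro-SMP-summary}:
$$\frac1{\kappa_q}\ls C\min\Bigl\{q,\ \sqrt{q\ln(en)},\ \sqrt{\frac{n\ln(en)\ln(e+q)}{q}}\Bigr\}.$$
The first term is never better than the other two in the relevant range. The second term is increasing in $q$, the third is decreasing, and they cross at $q_*\simeq\sqrt{n\ln(e+\ln n)}$. Summing over dyadic $q=p_k$:
\begin{itemize}
\item For $q\ls q_*$ the geometric sum is dominated by its last term, $\sqrt{q_*\ln(en)}\simeq n^{1/4}\sqrt{\ln(en)}(\ln(e+\ln n))^{1/4}$.
\item For $q\gr q_*$ the sum is again geometric, dominated by $q=q_*$, and gives $\sqrt{n\ln(en)\ln(e+q_*)/q_*}$.
\end{itemize}
Both are at most $Cn^{1/4}\sqrt{\ln(en)}\ln(e+\ln(en))$, since $\ln(e+q_*)\simeq\ln(en)$.

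This crude sum actually gives a slightly better factor in $\ln\ln$, so the paper's $\ln(e+\ln(en))$ probably arises from a less efficient step. Two candidates: Lata\l a's level bounds carry an extra $\ln(e+p_k)$-type weight, or there are about $\ln\ln n$ nonnegligible levels near the crossover. Either way the stated bound follows.

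The main obstacle is the first step: verifying that Lata\l a's Corollary~6.4 argument localizes, meaning that level $k$ uses $\mathrm{SMP}$ only at the single exponent $p_k$, and that the $\ln(en/p)$ factor attached to a global $\kappa$ really comes from summing $\kappa^{-1}$ over roughly $\ln(en/p)$ levels. If that factor instead enters multiplicatively, for example through a union over scales, I would try to absorb it into the geometric decay of the two summed sequences, accepting the $\ln(e+\ln(en))$ loss. Finally,~\eqref{eq:weak-strong-X} follows by taking $Y=X$.
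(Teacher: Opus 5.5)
Your chaining framework is the paper's, but the summation of the level constants fails, and that step is the whole theorem. With $L=\ln(en)$, the terms $\sqrt{qL}$ and $\sqrt{nL\ln(e+q)/q}$ cross where $q^2=n\ln(e+q)$, i.e.\ at $q_*\simeq\sqrt{nL}$, not at $\sqrt{n\ln(e+\ln n)}$. Your two bullets are in fact inconsistent. At your $q_*$ the third term is about $n^{1/4}L\,(\ln(e+\ln n))^{-1/4}$, not $n^{1/4}\sqrt L\,(\ln(e+\ln n))^{1/4}$, and it is not bounded by $n^{1/4}\sqrt L\ln(e+L)$. At the true crossover both terms are of order $n^{1/4}L^{3/4}$, and the first term $q$ is much larger there. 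So if you use only the three explicit constants of \eqref{eq:intro-SMP-summary}, a single dyadic level near $q_*$ already costs $n^{1/4}\ln^{3/4}(en)$. Since $L^{1/4}/\ln(e+L)\to\infty$, your route proves only the coefficient $Cn^{1/4}\ln^{3/4}(en)$, not the stated one. The ``slightly better $\ln\ln$ factor'' you found was a symptom of this error, not slack in the paper.

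The missing idea is a sharper width bound in the window $\sqrt n<q\ls\sqrt n\,L$. There Lemma~\ref{lem:full-range-centroid-width} loses a factor $\sqrt{\ln(e+q)}\simeq\sqrt L$. The paper instead feeds Proposition~\ref{prop:ellstar-SMP} with Borell's comparison $Z_q(X)\subseteq C(q/\sqrt n)Z_{\sqrt n}(X)$ and Paouris' estimate \eqref{eq:Paouris-small-p-Mstar} at $q=\sqrt n$. These give $\ell^*(Z_q(X))\ls Cqn^{1/4}$, hence $\mathrm{SMP}_q(c/(n^{1/4}\sqrt L))$ uniformly on the $\simeq\ln(e+L)$ dyadic levels of this window. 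That count of levels, not an extra weight in Lata\l a's level bounds, is the origin of $\ln(e+\ln(en))$. The other ranges are easy:
\begin{itemize}
\item below $\sqrt n$, the term $\sqrt{qL}$ sums to $Cn^{1/4}\sqrt L$;
\item above $\sqrt n\,L$, the lemma gives $\sqrt{nL\ln(e+q)/q}\ls C\sqrt n\,L/\sqrt q$, which sums geometrically from $n^{1/4}\sqrt L$.
\end{itemize}

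The point you flagged as the main obstacle is harmless. In Lata\l a's Proposition~6.1 the nets need not be nested. Each $T_k$ is a maximal $C_0b_{2^{k+1}}\E\|X\|$-separated set for $d_{X,2^{k+1}}$, and $\mathrm{SMP}$ enters only through the bound $|T_k|<e^{2^{k+1}}$ at that single exponent. Likewise, the case $p>n$ needs no Paouris-type input, which would not apply to $Y$ anyway. A union bound over a $5^n$-point $1/2$-net of the dual ball gives $(\E\|Y\|^p)^{1/p}\ls 2\cdot5^{n/p}\sigma_p(Y)$.
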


Using a single global Sudakov constant instead gives the weaker coefficient $C\sqrt{n\ln(en)}$.
The proof of Theorem~\ref{th:intro-weak-strong} uses Paouris' estimate at the lower levels, Borell's moment comparison in the intermediate range and the full range mean width estimate of Giannopoulos, Pafis and Tziotziou at the upper levels.
The factor $\ln(e+\ln(en))$ records the number of intermediate dyadic levels and is not claimed to be optimal.

The reverse moment comparison also gives useful information on the dual mean parameter of centroid bodies.
For an origin symmetric convex body $K$, write $M(K)=\int_{S^{n-1}}\|\theta\|_K\,d\sigma(\theta)$ and $M^*(K)=\int_{S^{n-1}}h_K(\theta)\,d\sigma(\theta)$.
Mean norms of centroid bodies were studied earlier in~\cite{Giannopoulos-Stavrakakis-Tsolomitis-Vritsiou-2015,Giannopoulos-Milman-2014}; their estimates are stronger for some small values of $p$.
Here we obtain the following estimate throughout the full range:
$$ M(Z_p(X))\ls C\sqrt{\frac{(n+p)\ln(en)}{np}},\qquad p\gr2, $$
and, together with the known estimates for $M^*(Z_p(X))$, corresponding $MM^*$ bounds.
The earlier estimates and the bound above are complementary.
These results are placed after the weak--strong theorem because they form the transition from support functions and mean widths to polar metrics.

\medskip 

Our next results concern generalized dual Sudakov estimates.
For a random vector $X$ and an origin symmetric convex body $K$, put $I_1(X,K)=\E\|X\|_K$ and $I_1^*(X,K)=\E h_K(X)$ whenever these expectations are finite.
We use the shorthand $\mathcal I_r(X)=I_1(X,Z_r(X)^\circ)=\E h_{Z_r(X)}(X)$.
For a bounded set $A$ and an origin symmetric convex body $B$, let $\mathsf M(A,B)$ be the largest cardinality of a subset $S\subseteq A$ such that $s-t\notin B$ for distinct $s,t\in S$.
Mendelson, Milman and Paouris~\cite{Mendelson-Milman-Paouris} proposed the dimension free estimate
$$ \mathsf M\left(Z_p(\mu),C I_1(\mu,K)K\right)\ls e^{Cp} $$
for every origin symmetric convex body $K$ and every origin symmetric log-concave probability measure $\mu$.
Their program gives a general dimension dependent weak estimate and seeks a separation preserving dimension reduction; they obtain the required reduction for ellipsoids and, up to logarithmic losses, for cubes.
We consider the particular choice $ K=Z_r(X)^\circ.$
Here we use the term self generated to emphasize that the body defining the metric is constructed from the same random vector $X$ as the centroid body which is being packed.
Indeed,
$$ \|s-t\|_{Z_r(X)^\circ}=h_{Z_r(X)}(s-t)=\|\langle s-t,X\rangle\|_r=d_{X,r}(s,t), $$
while
$$ I_1(X,Z_r(X)^\circ)=\E\|X\|_{Z_r(X)^\circ}=\mathcal I_r(X). $$
Thus $\mathsf M(Z_p(X),a\mathcal I_r(X)Z_r(X)^\circ)$ measures the maximal size of a subset of $Z_p(X)$ which is separated in the intrinsic metric $d_{X,r}$ at a fixed multiple of its mean scale.
In this sense the self generated case is the generalized dual Sudakov problem associated with the same moment metrics that occur in the $L_p$-Sudakov estimates above.
It is also sufficiently structured to be compared with the covariance ellipsoid: the inclusion $Z_r(X)\subseteq CrZ_2(X)$ and the lower estimate $\mathcal I_r(X)\gr c\sqrt{nr}$ will allow us to obtain a dimension free bound.

\begin{theorem}\label{th:polar-centroid-entropy}
There is an absolute constant $C\gr1$ such that, for every centered nondegenerate log-concave random vector $X$ in $\R^n$, every $r\gr2$, every $p\gr1$ and every $u\gr1$,
\begin{equation}\label{eq:polar-centroid-entropy}
\ln\mathsf M\left(Z_p(X),Cu\mathcal I_r(X)Z_r(X)^\circ\right) \ls Cp\left[ \left(\frac{r\mathcal I_2(X)}{u\mathcal I_r(X)}\right)^2+ \frac{r\mathcal I_2(X)}{u\mathcal I_r(X)} \right].
\end{equation}
Consequently,
\begin{equation}\label{eq:polar-centroid-entropy-universal}
\ln\mathsf M\left(Z_p(X),Cu\mathcal I_r(X)Z_r(X)^\circ\right) \ls Cp\left[\left(\frac ru\right)^2+\frac ru\right].
\end{equation}
If, in addition, $X$ is isotropic and $2\ls r\ls n$, then
\begin{equation}\label{eq:polar-centroid-entropy-isotropic}
\ln\mathsf M\left(Z_p(X),Cu\mathcal I_r(X)Z_r(X)^\circ\right) \ls Cp\left(\frac r{u^2}+\frac{\sqrt r}{u}\right).
\end{equation}
In particular, taking $u=\sqrt r$ gives
$$\mathsf M\left(Z_p(X),C\sqrt r\,\mathcal I_r(X)Z_r(X)^\circ\right)\ls e^{Cp}, \qquad 2\ls r\ls n.$$
\end{theorem}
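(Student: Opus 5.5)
The plan is to compare the polar centroid metric with the covariance ellipsoid, reduce to the isotropic case by linear invariance, and so reduce \eqref{eq:polar-centroid-entropy} to a single Euclidean packing estimate for $Z_p(X)$ at a dimension-normalized scale. The two consequences then follow from elementary bounds on the ratio $\mathcal I_2(X)/\mathcal I_r(X)$.

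\emph{Step 1 (comparison with the ellipsoid).} The inclusion $Z_r(X)\subseteq C_0rZ_2(X)$ for log-concave $X$ gives $Z_r(X)^\circ\supseteq (C_0r)^{-1}Z_2(X)^\circ$. Hence if $s-t\notin Cu\mathcal I_r(X)Z_r(X)^\circ$, then
$$s-t\notin \frac{Cu\mathcal I_r(X)}{C_0r}\,Z_2(X)^\circ .$$
Therefore
$$\mathsf M\bigl(Z_p(X),Cu\mathcal I_r(X)Z_r(X)^\circ\bigr)\ls \mathsf M\Bigl(Z_p(X),\tfrac{C}{C_0}\,\tfrac{u\mathcal I_r(X)}{r\mathcal I_2(X)}\,\mathcal I_2(X)Z_2(X)^\circ\Bigr).$$
Write $\lambda=r\mathcal I_2(X)/(u\mathcal I_r(X))$. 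The relevant ellipsoidal scale is then $t\,\mathcal I_2(X)Z_2(X)^\circ$ with $t\simeq 1/\lambda$.

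\emph{Step 2 (reduction to the isotropic case).} For invertible $T$ we have $Z_p(TX)=TZ_p(X)$ and $Z_r(TX)^\circ=T^{-*}Z_r(X)^\circ$. Since $h_{Z_r(TX)}(Ty)$ is computed through $\langle y,X\rangle$ up to the corresponding change of variables, the quantities $\mathcal I_r$ and $\mathsf M(Z_p,\cdot\,Z_r^\circ)$ are unchanged under linear maps. (I would recheck this bookkeeping carefully with the dual conventions.) We may therefore assume $X$ isotropic. Then $Z_2(X)^\circ=B_2^n$ and $\mathcal I_2(X)=\E|X|\simeq\sqrt n$. The whole statement reduces to the Euclidean packing estimate
$$\ln\mathsf M\bigl(Z_p(X),t\sqrt n\,B_2^n\bigr)\ls Cp\bigl(t^{-2}+t^{-1}\bigr),\qquad t>0,$$
applied with $t\simeq1/\lambda$. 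This yields $Cp(\lambda^2+\lambda)$, which is \eqref{eq:polar-centroid-entropy}.

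\emph{Step 3 (the Euclidean packing estimate; main obstacle).} This is the ellipsoidal case of the Mendelson--Milman--Paouris program, where the required reduction is available. I would try to prove the rescaled form directly by a probabilistic separation argument. Let $S\subseteq Z_p(X)$ be $t\sqrt n$-separated in $|\cdot|$. For $s\in Z_p(X)$ we have $\|\langle s,X\rangle\|_p\ls h_{Z_p}$-type control, and each $s$ acts as a functional with $\|\langle s,\cdot\rangle\|_{L_p(X)}$ bounded. Testing $S$ against a random Gaussian vector $G$, or against $\langle\cdot,X\rangle$ via Paouris' moment bound $(\E|X|^p)^{1/p}\ls C(\sqrt n+\sqrt p\,\cdot)$ suitably used, should give $|S|\ls e^{Cp(t^{-2}+t^{-1})}$. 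Here the quadratic term $t^{-2}$ is the Gaussian (Sudakov) regime and the linear term $t^{-1}$ comes from the exponential tails of log-concave marginals. I expect this step to be the genuine difficulty. Naive Sudakov combined with the mean width of $Z_p$ loses powers of $p/\sqrt n$. My first attempt is a volumetric-duality argument in the $p$-dimensional range: separate $Z_p$ into its Euclidean part $\sqrt p\,B_2^n$-like core, plus the exponential part controlled by $Z_p\subseteq C pZ_2$-type inclusions at the level of functionals. If that fails, I would fall back on quoting the MMP ellipsoid theorem at scale $t\sqrt n$ and deriving the $t$-dependence by iterating it.

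\emph{Step 4 (consequences).} Since $Z_2(X)\subseteq Z_r(X)$ for $r\gr2$, we have $\mathcal I_2(X)\ls\mathcal I_r(X)$. Hence $\lambda\ls r/u$, which gives \eqref{eq:polar-centroid-entropy-universal}. In the isotropic case, $\mathcal I_2(X)\ls(\E|X|^2)^{1/2}=\sqrt n$, and the stated lower estimate $\mathcal I_r(X)\gr c\sqrt{nr}$ for $2\ls r\ls n$ gives $\lambda\ls C\sqrt r/u$. Thus $\lambda^2+\lambda\ls C(r/u^2+\sqrt r/u)$, which is \eqref{eq:polar-centroid-entropy-isotropic}, and $u=\sqrt r$ gives the final $e^{Cp}$ bound.
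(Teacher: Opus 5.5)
Your Steps 1 and 4 coincide with the paper's argument, but the middle of the proposal has a genuine gap.

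\textbf{Step 2 is false.} Under $X\mapsto TX$ one has $Z_p(TX)=TZ_p(X)$ but $Z_r(TX)^\circ=T^{-*}Z_r(X)^\circ$. Hence the separation of $Ts,Tt$ is $h_{Z_r(TX)}(T(s-t))=\|\langle T^*T(s-t),X\rangle\|_r$, and $\mathcal I_r(TX)=\E h_{Z_r(X)}(T^*TX)$. The self generated problem pairs a primal body with a polar one through the Euclidean structure, so only orthogonal maps preserve it. For instance, take a Gaussian $X$ with covariance $\operatorname{diag}(1,\varepsilon,\dots,\varepsilon)$ and let $\varepsilon\to0$. Then the packing number at $u=1$ grows like $\sqrt p$. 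For its isotropic image, the standard Gaussian, it equals $1$ when $p\ls n$.

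Write $X=\Sigma^{1/2}W$ with $W$ isotropic. What Step 1 actually requires is
\[ \ln\mathsf M\bigl(Z_p(W),t\,I_1(W,D)\,D\bigr)\ls Cp\bigl(t^{-2}+t^{-1}\bigr),\qquad D=\Sigma^{-1}B_2^n, \]
that is, the estimate for an arbitrary ellipsoid, not only for $B_2^n$. Your Euclidean reduction therefore covers only isotropic $X$. But \eqref{eq:polar-centroid-entropy} and \eqref{eq:polar-centroid-entropy-universal} are asserted for every centered nondegenerate $X$.

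\textbf{Step 3 is not proved.} This is the only nontrivial input. The heuristic split into a Gaussian regime and an exponential-tail regime is not an argument. Iterating a single-scale estimate does not produce the $t$-dependence.

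The paper avoids both difficulties by quoting the Mendelson--Milman--Paouris regular entropy theorem for ellipsoids (their Theorem~7.4). It is recorded as Lemma~\ref{lem:ellipsoidal-regular-entropy} after the conversion $m_1(\mu,D)\simeq I_1(\mu,D)$ from their Lemma~2.1. That theorem already gives the bound $Cp(t^{-2}+t^{-1})$ for every origin symmetric ellipsoid $D$ and every $t>0$. It is applied directly with $D=Z_2(X)^\circ$ and $t\simeq u\mathcal I_r(X)/(r\mathcal I_2(X))$, so no isotropic reduction is needed.

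\textbf{Symmetrization is missing.} That theorem concerns symmetric measures, so you also need a symmetrization step, which your proposal omits. Let $Y=X-X'$ with $X'$ an independent copy. Then $Z_q(X)\subseteq Z_q(Y)\subseteq 2Z_q(X)$ and $\mathcal I_q(X)\ls\mathcal I_q(Y)\ls 4\mathcal I_q(X)$; the lower bound follows from Jensen's inequality conditionally on $X$. Hence the target body $\mathcal I_r Z_r^\circ$, the packed body $Z_p$ and the ratio $\mathcal I_2/\mathcal I_r$ all pass from $Y$ to $X$ at the cost of absolute constants.
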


The Mendelson--Milman--Paouris conjecture, specialized to the self generated choice $K=Z_r(X)^\circ$, predicts
$$ \mathsf M\left(Z_p(X),C\mathcal I_r(X)Z_r(X)^\circ\right)\ls e^{Cp}. $$
Thus the conjectural $e^{Cp}$ estimate corresponds precisely to the scale $u=1$ in Theorem~\ref{th:polar-centroid-entropy}.
Our estimate is dimension free in isotropic position, but at this scale it gives only
$$ \ln\mathsf M\left(Z_p(X),C\mathcal I_r(X)Z_r(X)^\circ\right)\ls Cpr. $$
The $e^{Cp}$ packing estimate is obtained after enlarging the separation body by the factor $\sqrt r$, namely
$$ \mathsf M\left(Z_p(X),C\sqrt r\,\mathcal I_r(X)Z_r(X)^\circ\right)\ls e^{Cp}. $$
Consequently, Theorem~\ref{th:polar-centroid-entropy} does not settle the Mendelson--Milman--Paouris conjecture even for the self generated target.
Its contribution is a dimension free estimate for this intrinsic family of metrics, with an explicit description of the remaining loss in the moment parameter $r$.
The detailed comparison with the dimension dependent Mendelson--Milman--Paouris bounds is recorded in Appendix~\ref{appendix:MMP-comparison}.

The entropy theorem yields factorization estimates through arbitrary symmetric convex bodies.
Combining its localization with the projection geometry of the subgaussian body gives a refinement in terms of the affine dimension of a separated set.
A simple consequence is the following: if $T\subseteq Z_p(X)$ is $a\mathcal I_r(X)Z_r(X)^\circ$-separated, $d=\dim\operatorname{aff}T$ and $d\ln(e+C\sqrt r/a)\ls cp$, then $|T|\ls e^{Cp}$.
The full affine dimensional estimate, which contains an additional regime involving projections, is stated in Section~\ref{sec:affine-applications}.
These are consequences of the entropy mechanism rather than additional main problems.

For context, Paouris and Pathak~\cite[Theorem~3.4]{Paouris-Pathak-2026}
recently proved a refined first moment comparison for gauges of conditioned
Gaussian measures in Bobkov position, a more restrictive class of measures
than in Theorem~\ref{th:intro-moment-comparison}.
They also obtained optimal affine mean width and Euclidean metric entropy
estimates for arbitrary convex bodies; these optimize over a linear position
and use Euclidean covering bodies, whereas the entropy estimates here are
formulated in fixed isotropic coordinates and for prescribed target bodies.

Section~\ref{section:preliminaries} fixes the notation and records the external estimates.
Section~\ref{sec:L2-proof} proves Theorem~\ref{th:intro-moment-comparison}, its unconditional extension and the sharp worst case $L_2$-Sudakov estimate.
Section~\ref{sec:Lp-SMP} proves the $L_p$-Sudakov estimates summarized in Theorem~\ref{th:intro-SMP-summary}.
Section~\ref{sec:weak-strong} proves Theorem~\ref{th:intro-weak-strong} and then records the mean norm consequences for centroid bodies.
Section~\ref{sec:generalized} proves Theorem~\ref{th:polar-centroid-entropy} and the factorization results.
Section~\ref{sec:affine-applications} gives the affine dimensional refinement and the convex body application.
Additional quantile estimates, a dimension dependent large family bound and the detailed comparison with the Mendelson--Milman--Paouris estimates are collected in the appendices.

%%%%%%%%%%%%%%%%%%%%%%%%%%%%%%%%%%%%%%%%%%%%%%%%%%%%%%%%%%%%%%%%%%%%%%%%%%%%%%%%%%%%%%%%%%%%%%%%%%%%%%%%%%%%%%%%%%%%%%%%%%%%%%%%%%%%%%
\section{Notation and preliminary results}\label{section:preliminaries}
%%%%%%%%%%%%%%%%%%%%%%%%%%%%%%%%%%%%%%%%%%%%%%%%%%%%%%%%%%%%%%%%%%%%%%%%%%%%%%%%%%%%%%%%%%%%%%%%%%%%%%%%%%%%%%%%%%%%%%%%%%%%%%%%%%%%%%

We use the Euclidean structure of $\R^n$: $\langle\cdot,\cdot\rangle$, $|\cdot|$, $B_2^n$ and $(e_i)_{i=1}^n$ denote the Euclidean inner product, norm, unit ball and canonical basis.
For background on the notation and standard facts from asymptotic geometric analysis used below, we refer to~\cite{Artstein-Avidan-Giannopoulos-Milman-I,Artstein-Avidan-Giannopoulos-Milman-II}.
According to context, $|\cdot|$ also denotes cardinality or Lebesgue measure.
We write $\mathbb P$ and $\E$ for probability and expectation.
For a real random variable $\xi$ and $p>0$, put $\|\xi\|_p=(\E|\xi|^p)^{1/p}$.
The relation $a\simeq b$ means $ca\ls b\ls Ca$ for absolute constants, and $c,C,c_1,C_1,\ldots$ denote positive absolute constants which may change from line to line.

A random vector is centered if $\E X=0$.
A probability measure $\mu$ on $\R^n$ is log-concave if, for all nonempty compact sets $A,B\subseteq\R^n$ and every $0<\lambda<1$,
$$ \mu((1-\lambda)A+\lambda B)\gr\mu(A)^{1-\lambda}\mu(B)^\lambda. $$
A random vector is log-concave if its distribution is log-concave; in the nondegenerate case this is equivalent to having a density of the form $e^{-V}$ with $V:\R^n\to(-\infty,\infty]$ convex.
It is symmetric if $X$ and $-X$ have the same distribution.
Its covariance matrix is $ \operatorname{Cov}(X)=\E[(X-\E X)\otimes(X-\E X)], $ where $(x\otimes y)z=\langle y,z\rangle x$.
The vector $X$ is isotropic if it is centered and $\operatorname{Cov}(X)=I_n$, where $I_n$ is the identity operator, and is nondegenerate if $\operatorname{Cov}(X)$ is invertible.
Linear images preserve log-concavity; in particular, orthogonal marginals of an isotropic log-concave vector are isotropic and log-concave on the corresponding subspace.

A convex body is a compact convex set with nonempty interior; it is origin symmetric if $K=-K$.
We write $\operatorname{span}A$ and $\operatorname{aff}A$ for the linear and affine hulls of a set $A$.
If $K$ is origin symmetric, then the Minkowski functional $\|\cdot\|_K$ defined above is a norm.
For a bounded set $A\subseteq\R^n$ and an origin symmetric convex body $B$, let $N(A,B)$ be the least positive integer $m$ for which
$$ A\subseteq\bigcup_{i=1}^m(x_i+B) $$
for some $x_1,\ldots,x_m\in\R^n$.
Together with the packing number $\mathsf M(A,B)$ defined in the Introduction, we use without further comment
$$ \mathsf M(A,2B)\ls N(A,B)\ls\mathsf M(A,B), \qquad N(A,D)\ls N(A,B)N(B,D), $$
whenever the sets involved make these quantities finite.
For an origin symmetric convex body $K$ put
$$ \ell(K)=\E\|G\|_K,\qquad \ell^*(K)=\E h_K(G), $$
where $G$ is a standard Gaussian vector in $\R^n$.
Then
\begin{equation}\label{eq:ell-M-relations}
\ell(K)\simeq\sqrt n\,M(K),\qquad \ell^*(K)\simeq\sqrt n\,M^*(K).
\end{equation}
For a subspace $E\subseteq\R^n$, we denote by $P_E$ the orthogonal projection onto $E$, by $B_2^E=B_2^n\cap E$ its Euclidean unit ball and by $G_E$ a standard Gaussian vector in $E$.
If $A\subseteq E$ is measurable and has positive volume, we write
$$ \operatorname{vrad}_E(A)=\left(\frac{|A|}{|B_2^E|}\right)^{1/\dim E}. $$
The Gaussian Sudakov minoration~\cite{Sudakov-1969} will be used in the following form: for every nonempty bounded $T\subseteq\R^n$ and every $\varepsilon>0$,
\begin{equation}\label{eq:Gaussian-Sudakov}
\E\sup_{t\in T}\langle t,G\rangle\gr c\varepsilon\sqrt{\ln N(T,\varepsilon B_2^n)}.
\end{equation}
We shall also use the classical Sudakov inequality and the dual Sudakov inequality of Pajor and Tomczak-Jaegermann~\cite{Sudakov-1969,Pajor-Tomczak-1986}: for every origin symmetric convex body $K$ and every $t>0$,
\begin{equation}\label{eq:classical-Sudakov-covering}
\ln N(K,tB_2^n)\ls\frac{C\ell^*(K)^2}{t^2}, \qquad \ln N(B_2^n,tK)\ls\frac{C\ell(K)^2}{t^2}.
\end{equation}
We use the square root convention for the Poincar\'e constant: for an isotropic log-concave $X$, let $\psi(X)$ be the least constant such that $\operatorname{Var}f(X)\ls\psi(X)^2\E|\nabla f(X)|^2$ for every locally Lipschitz function $f$.
We write $\psi_n$ for the supremum of $\psi(X)$ over all isotropic log-concave random vectors in $\R^n$.
The first moment comparison used below is the form recorded by Bizeul~\cite[Theorem~1.2]{Bizeul-MMstar-2026}.
The reverse inequality is due to Bizeul and Klartag~\cite{Bizeul-Klartag-2025}, and the upper inequality to Eldan and Lehec~\cite{Eldan-2013,Eldan-Lehec-2014}.
Letwin's estimate makes the constants absolute~\cite{Letwin-2026}.
Thus, for every gauge $\phi$,
\begin{equation}\label{eq:gauge-comparison}
 \frac{c}{\sqrt{\ln(en)}}\E\phi(G)\ls\E\phi(X)\ls C\sqrt{\ln(en)}\E\phi(G).
\end{equation}
Letwin also proved $\psi_n\ls C\ln^{1/4}(en)$.

We shall use two estimates for mean widths of centroid bodies.
Paouris proved~\cite{Paouris-2006}
\begin{equation}\label{eq:Paouris-small-p-Mstar}
M^*(Z_q(X))\ls C\sqrt q,
\qquad 1\ls q\ls\sqrt n.
\end{equation}
Earlier full range estimates were obtained by E.~Milman~\cite{EMilman-mean-width-2015}.
In the notation of Giannopoulos, Pafis and Tziotziou, the mean width $w$ is our parameter $M^*$.
Using Bizeul's optimal mean width estimate and a projection argument, they proved the following bound~\cite[Equation~(3.5)]{Giannopoulos-Pafis-Tziotziou-2026}.
\begin{lemma}\label{lem:full-range-centroid-width}
Let $X$ be isotropic and log-concave in $\R^n$.
Then, for every $1\ls q\ls n$,
$$ M^*(Z_q(X))\ls C\sqrt{q\ln(e+q)}, \qquad \ell^*(Z_q(X))\ls C\sqrt{nq\ln(e+q)}. $$
\end{lemma}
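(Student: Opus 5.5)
The plan is to reduce to the case $q\simeq\dim$, where Bizeul's optimal mean width estimate for isotropic convex bodies applies, and to pass back to $\R^n$ by averaging over projections. Assume first that $q=k\ls n$ is an integer; the general case follows from $Z_q(X)\subseteq Z_{\lceil q\rceil}(X)$, since $\lceil q\rceil\ls 2q$. The second inequality, for $\ell^*(Z_q(X))$, then follows from the first by~\eqref{eq:ell-M-relations}. So everything reduces to bounding $M^*(Z_k(X))$ by $C\sqrt{k\ln(e+k)}$.

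\emph{Step 1 (projection).} By rotation invariance of $\sigma$, for every convex body $K$ one has $M^*(K)=\E_E\, M^*_E(P_EK)$, where $E$ is a uniformly random $k$-dimensional subspace and $M^*_E$ is the mean width computed in $E$. Since $h_{Z_k(X)}(\theta)=\|\langle X,\theta\rangle\|_k$, we have $P_EZ_k(X)=Z_k(P_EX)$. Here $Y=P_EX$ is isotropic and log-concave in $E\cong\R^k$. It therefore suffices to show
$$M^*(Z_k(Y))\ls C\sqrt{k\ln(ek)}$$
uniformly over isotropic log-concave $Y$ in $\R^k$. The point is that the moment order now equals the dimension.

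\emph{Step 2 (top order centroid body as a convex body).} For $Y$ isotropic log-concave in $\R^k$ with density $f$, I would use Ball's body $K_{k+1}(f)$ together with Paouris' comparison. These give a convex body $T\subseteq\R^k$ (a suitable homothet of $K_{k+1}(f)$, centered after an absolutely bounded translation) with the following two properties:
\begin{itemize}
\item[(a)] $Z_k(Y)\simeq\conv(T\cup -T)$;
\item[(b)] the uniform measure on $T$ has covariance $\simeq I_k$.
\end{itemize}
The standard facts used are that $Z_p(Y)\simeq f(0)^{1/k}Z_p(K_{k+1}(f))$ for $p\ls k$ up to absolute constants, and that $Z_k(L)\simeq\conv(L\cup-L)$ for a $k$-dimensional convex body $L$ with barycenter at the origin. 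After a linear change $T\mapsto A T$ with $\|A\|,\|A^{-1}\|\ls C$, we may assume $T$ is exactly in isotropic position. Bizeul's optimal estimate, recalled in the Introduction, then gives $M^*(T)\ls C\sqrt{k\ln(ek)}$. Since $M^*(-T)=M^*(T)$ and $h_{\conv(T\cup-T)}=\max\{h_T,h_{-T}\}\ls h_T+h_{-T}$, we obtain $M^*(Z_k(Y))\ls C\sqrt{k\ln(ek)}$. Averaging over $E$ in Step 1 finishes the proof.

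\emph{Main obstacle.} I expect the difficulty to lie in Step 2, specifically in the normalization. One must check that the volume scaling $f(0)^{1/k}$ and the covariance of $K_{k+1}(f)$ fit together so that $T$ has covariance comparable to the identity with absolute constants. Any residual factor such as an isotropic constant $L_Y$ must cancel, since $f(0)^{1/k}\simeq L_Y$ for isotropic $Y$. If $L_Y$ enters one of these relations, it should enter the other as well. I would first verify this cancellation via the identity $|K_{k+2}(f)|f(0)=1$ and the covariance formula for $K_{k+2}(f)$, rather than for $K_{k+1}(f)$. The second delicate point is that Bizeul's bound is stated for convex bodies in isotropic position with centered barycenter. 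The translation needed to center $T$ is bounded, so it changes $M^*$ by at most an absolute constant, which is harmless.
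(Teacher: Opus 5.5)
Your proposal is correct and matches the route the paper indicates. The paper gives no proof of its own; it quotes the lemma from Giannopoulos, Pafis and Tziotziou, Equation~(3.5), describing it as Bizeul's optimal mean width estimate combined with a projection argument. Your steps are that argument: average $M^*$ over $k$-dimensional projections using $P_EZ_k(X)=Z_k(P_EX)$, identify $Z_k(Y)$ in dimension $k$ with $\conv(K_{k+1}(f)\cup-K_{k+1}(f))$ up to absolute constants, apply Bizeul's isotropic bound, and obtain $\ell^*$ from~\eqref{eq:ell-M-relations}.

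On the normalization point you flag, three small corrections:
\begin{enumerate}
\item[(1)] The exact identity is $f(0)|K_k(f)|=1$, not $f(0)|K_{k+2}(f)|=1$. What holds for $K_{k+2}(f)$ is $f(0)\int_{K_{k+2}(f)}x\otimes x\,dx=\operatorname{Cov}(Y)=I_k$.
\item[(2)] The bodies $K_k(f)$, $K_{k+1}(f)$ and $K_{k+2}(f)$ are mutually comparable with absolute constants. This gives (b) without any residual isotropic constant.
\item[(3)] $K_{k+1}(f)$ already has barycenter at the origin, and $M^*$ is exactly translation invariant, so no translation step is needed.
\end{enumerate}
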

The second estimate follows from~\eqref{eq:ell-M-relations}.
For $q\ls\sqrt n$, \eqref{eq:Paouris-small-p-Mstar} gives the sharper bound $\ell^*(Z_q(X))\ls C\sqrt{nq}$.

We use the pseudometric $d_{X,p}$ and the properties $\mathrm{SMP}_p(\alpha)$ and $\mathrm{SMP}(\alpha)$ as defined in the Introduction.
We shall use the following consequences of Lata\l a's results~\cite[Lemmas~2.1, 2.2, 2.6, 2.8 and Remarks~2.4, 2.7]{Latala-Sudakov-2014}.

\begin{lemma}\label{lem:Latala-tools}
\begin{enumerate}
\item[{\rm (1)}]  Let $X$ be a random vector, let $X'$ be an independent copy of $X$, and let $p\gr1$.
If $X-X'$ satisfies $\mathrm{SMP}_p(\alpha)$, then $X$ satisfies $\mathrm{SMP}_p(\min\{1/2,\alpha/4\})$.
The property $\mathrm{SMP}_p(\alpha)$ is preserved under linear images.
\item[{\rm (2)}]  If $\xi$ is centered and log-concave on $\R$, then, for every $p\gr2$,
\begin{equation}\label{eq:one-dimensional-moment-growth}
\|\xi\|_p\ls Cp\|\xi\|_2.
\end{equation}
\item[{\rm (3)}]  If $Y$ is symmetric, log-concave and $d$-dimensional and satisfies $\mathrm{SMP}_d(\alpha)$, then it satisfies $\mathrm{SMP}_p(\alpha/8)$ for every $p\gr d$.
\item[{\rm (4)}]  Every log-concave random vector satisfies $\mathrm{SMP}_p(c/\max\{p,2\})$.
Every $d$-dimensional log-concave random vector satisfies $\mathrm{SMP}(c/d)$.
\item[{\rm (5)}]  If $Y$ is symmetric, log-concave and $d$-dimensional, then, for every $p\gr2$,
\begin{equation}\label{eq:Latala-large-p}
Y\text{ satisfies }\mathrm{SMP}_p\left(\frac{e^{p/d}-1}{\sqrt2\,p}\right).
\end{equation}
\end{enumerate}
\end{lemma}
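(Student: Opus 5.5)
Lemma~\ref{lem:Latala-tools} collects statements from~\cite{Latala-Sudakov-2014}, so the plan is to check that each item matches the cited result and its normalization, and to reprove the short parts directly.

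For (1), I would argue directly. Fix $T$ with $|T|\gr e^p$ and $d_{X,p}(s,t)\gr A$ for distinct $s,t\in T$. For $u=t-s$, conditioning on $X$ and applying Jensen in $X'$ gives
$$\|\langle u,X-X'\rangle\|_p\gr\|\langle u,X-\E X\rangle\|_p.$$
By the triangle inequality,
$$A\ls\|\langle u,X\rangle\|_p\ls\|\langle u,X-\E X\rangle\|_p+|\langle u,\E X\rangle|.$$
So for each pair, either the centered part is at least $A/2$ or the mean part is.

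The difficulty is that the two cases may be mixed across pairs. I would handle this with a Ramsey or graph-colouring step, or more simply by Lata\l a's device of a two-term lower bound:
$$2\E\sup_{s,t\in T}\langle t-s,X\rangle\gr\E\sup_{s,t\in T}\langle t-s,X-X'\rangle \quad\text{and}\quad \E\sup_{s,t\in T}\langle t-s,X\rangle\gr\sup_{s,t\in T}\langle t-s,\E X\rangle.$$
Once the separated subfamily is controlled, these two bounds produce the constant $\min\{1/2,\alpha/4\}$. I expect this bookkeeping to be the only delicate point, and I would follow~\cite[Lemma~2.1]{Latala-Sudakov-2014} for it.

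Invariance under a linear map $L$ is immediate. We have $d_{LX,p}(s,t)=d_{X,p}(L^{T}s,L^{T}t)$ and $\langle t-s,LX\rangle=\langle L^{T}t-L^{T}s,X\rangle$. Separation forces $L^{T}$ to be injective on $T$, so $L^{T}T$ is an admissible set for $X$ with the same separation and the same supremum.

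Item (2) is Borell's lemma. The tails of a centered one-dimensional log-concave variable are exponential at scale $\|\xi\|_2$ (equivalently, $\|\xi\|_p\ls \frac{p}{q}C\|\xi\|_q$ for $p\gr q$), which gives~\eqref{eq:one-dimensional-moment-growth}.

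Items (3)--(5) are the deeper parts and I would quote them from~\cite[Lemmas~2.6, 2.8, Remarks~2.4, 2.7]{Latala-Sudakov-2014}. The idea behind (3) and (5) is that for a symmetric $d$-dimensional $Y$ and $p\gr d$, the metric $d_{Y,p}$ is comparable to $d_{Y,d}$ up to factors controlled by $p/d$ and volume ratios of $Z_p(Y)$ and $Z_d(Y)$. A separated set of size $e^p$ then contains a large enough subset to which $\mathrm{SMP}_d$ applies, via a volumetric counting argument. This is the step that produces the factor $e^{p/d}-1$ in~\eqref{eq:Latala-large-p} and the loss $1/8$ in (3). For (4), I would use the bound $\|\cdot\|_p\ls Cp\|\cdot\|_1$ on linear functionals, which follows from (2), together with a Paley--Zygmund argument on $\max_{t\in T}\langle t,X\rangle$. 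The dimensional statement $\mathrm{SMP}(c/d)$ then follows by combining the case $p\ls d$ with (3) for $p\gr d$, after symmetrizing through (1).
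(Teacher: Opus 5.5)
Your plan agrees with the paper. The paper gives no argument of its own here: it quotes Lata\l a's Lemmas~2.1, 2.2, 2.6, 2.8 and Remarks~2.4, 2.7, and obtains (2) by symmetrization from his Lemma~2.3, where you use Borell's lemma directly; both routes are fine. Your checks of linear invariance, of (2), and of the dimensional part of (4) from (3) and (1) are correct.

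The one step you misjudge is in (1). There is no problem with the two cases being mixed across pairs, and you must not pass to a subfamily. A Ramsey or colouring extraction would lose the hypothesis $|T|\gr e^p$ needed to apply $\mathrm{SMP}_p(\alpha)$ to $X-X'$. The case split should be made globally:
\begin{itemize}
\item If $|\langle t_0-s_0,\E X\rangle|\gr A/2$ for some pair, then $\E\sup_{s,t\in T}\langle t-s,X\rangle\gr\sup_{s,t\in T}\langle t-s,\E X\rangle\gr A/2$.
\item Otherwise your triangle inequality gives $\|\langle t-s,X-\E X\rangle\|_p>A/2$, and hence $d_{X-X',p}(s,t)>A/2$, for \emph{all} distinct $s,t\in T$. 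So the whole of $T$ is admissible, $\E\sup_{s,t\in T}\langle t-s,X-X'\rangle\gr\alpha A/2$, and your first displayed bound yields $\alpha A/4$.
\end{itemize}

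Two smaller points. Item (4) needs no Paley--Zygmund argument: a single pair already gives $\E\sup_{s,t\in T}\langle t-s,X\rangle\gr\|\langle t_0-s_0,X\rangle\|_1\gr A/(C\max\{p,2\})$. The last step follows from Borell's lemma for the seminorm $x\mapsto|\langle t_0-s_0,x\rangle|$, which, unlike (2) as stated, needs no centering. Also, (5) is a standalone volumetric bound with no $\mathrm{SMP}_d$ hypothesis, so your ``subset to which $\mathrm{SMP}_d$ applies'' heuristic describes (3) rather than (5). Since you quote both results, this does not affect the proof.
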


The one-dimensional estimate in Lemma~\ref{lem:Latala-tools}(2) follows by symmetrization from~\cite[Lemma~2.3]{Latala-Sudakov-2014}; the remaining assertions are the corresponding forms of the results cited above.

We shall use the following consequence of Lata\l a and Nayar~\cite[Theorem~1]{Latala-Nayar-2020}.

\begin{lemma}
For every nondegenerate $n$-dimensional random vector $Y$ and every $p\gr2$,
\begin{equation}\label{eq:LN-self-gauge}
\left(\E\|Y\|_{Z_p(Y)}^p\right)^{1/p} \ls2\sqrt e\sqrt{\frac{n+p}{p}}.
\end{equation}
\end{lemma}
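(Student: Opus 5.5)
The plan is a duality-and-averaging (Fubini) argument: bound an averaged $p$-th moment of linear functionals of $Y$ from above using only the definition of $Z_p(Y)$, then bound the same average from below pointwise in $Y$ by a power of $\|Y\|_{Z_p(Y)}$. The estimate is essentially the content of Lata\l a and Nayar~\cite[Theorem~1]{Latala-Nayar-2020}, so the first thing I would do is check whether that theorem (a tail bound for $\|Y\|_{Z_p(Y)}$, or a moment bound at a different exponent) already gives~\eqref{eq:LN-self-gauge} after integrating the tail. Failing that, I would reproduce its mechanism as follows.

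Write $K=Z_p(Y)$. Since $Y$ is nondegenerate, $K$ is an origin symmetric convex body and $h_K(\theta)=\|\langle Y,\theta\rangle\|_p$. Let $\nu$ be a probability measure on $\R^n$ with density proportional to $e^{-F(\theta)}$, where $F$ is a function of $h_K$.

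The upper bound is exact. By Fubini and the definition of $Z_p$,
$$ \E\int|\langle Y,\theta\rangle|^p\,d\nu(\theta)=\int h_K(\theta)^p\,d\nu(\theta). $$
This is computable in polar coordinates by homogeneity. For example, with $F=h_K^2$ it equals
$$ \frac{\Gamma((n+p)/2)}{\Gamma(n/2)}\simeq\left(\frac{n+p}{2}\right)^{p/2}, $$
which already has the square-root shape $\bigl(\sqrt{(n+p)}\bigr)^p$ that the lemma requires.

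The lower bound is the main step. For fixed $x$ with $\|x\|_K=s$, choose $u\in K^\circ$ with $\langle x,u\rangle=s$. Then
$$ \int|\langle x,\theta\rangle|^p\,d\nu\gr s^p\int|\langle u,\theta\rangle|^p\,d\nu. $$
So it suffices to show
$$ \int|\langle u,\theta\rangle|^p\,d\nu\gr\left(\frac{c'p}{2}\right)^{p/2} $$
for every $u$ with $h_K(u)=1$. Combining this with the upper bound and taking $p$-th roots gives
$$ \left(\E\|Y\|_K^p\right)^{1/p}\ls C\sqrt{\frac{n+p}{p}}. $$

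I expect this lower bound to be the main obstacle: the constant $2\sqrt e$ must hold for arbitrary, non-log-concave $Y$, so only the geometry of $K$ can be used. My first attempt would be to restrict $\nu$ to the ray or cone through $u$. The function $\theta\mapsto\langle u,\theta\rangle$ is a linear functional bounded by $h_K(\theta)$ on that cone, and $\nu$ places mass at radius $\simeq\sqrt p$ with probability controlled by the one-dimensional $\Gamma$-integrals. If this does not work, I would replace $F$ by $h_K^p$, for which the upper bound becomes $n/p$. The lower bound then follows from Brunn's principle for the marginal of $\nu$ along $u$, giving a Beta-integral $B(p+1,n)$. I would then optimise the exponent in $F$ to recover the square root, tracking the constants to reach $2\sqrt e$.
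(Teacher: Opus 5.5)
Your first step, reading the bound off Lata\l a and Nayar's Theorem~1, is all the paper does: the lemma is imported without proof. Your self-contained fallback, however, has a genuine gap exactly at the step you flag as the main obstacle, and it cannot be closed within that scheme.

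First, the reduction is garbled. For $u\in K^\circ$ with $\langle x,u\rangle=s$, nothing gives $\int|\langle x,\theta\rangle|^p\,d\nu\gr s^p\int|\langle u,\theta\rangle|^p\,d\nu$. Also, $h_K(u)=1$ puts $u$ on $\partial K^\circ$, which is the wrong body. By homogeneity, what you actually need is $\int|\langle x,\theta\rangle|^p\,d\nu\gr(c'p/2)^{p/2}$ for every $x$ with $\|x\|_K=1$.

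Second, the choice of $F$ is irrelevant. If $\nu$ has density $e^{-F(h_K)}$, then $U=\theta/h_K(\theta)$ has the cone measure of $\partial K^\circ$ and is independent of $R=h_K(\theta)$. Since $h_K(U)=1$, both of your bounds carry the same factor $\E R^p$. The scheme therefore yields exactly $\E\|Y\|_K^p\ls\sup_{\|x\|_K=1}\bigl(\E|\langle x,U\rangle|^p\bigr)^{-1}$, whatever $F$ is, so optimising the exponent in $F$ changes nothing. The lemma would require $\|\langle x,U\rangle\|_p\gr c\sqrt{p/(n+p)}$ whenever $\|x\|_K=1$. Brunn's principle gives only $c\,p/(n+p)$, which is sharp for double cones, and hence only $(\E\|Y\|_K^p)^{1/p}\ls C(n+p)/p$, with no square root.

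Worse, the required bound is false for some $L_p$-centroid bodies. Let $Y$ be uniform on $\{\pm c_n(e_1\pm e_i):2\ls i\ls n\}$ with $c_n^p=2(n-1)$. Then $h_K(\theta)^p=\sum_{i\gr2}\bigl(|\theta_1+\theta_i|^p+|\theta_1-\theta_i|^p\bigr)$, and convexity gives $\|e_1\|_K=h_{K^\circ}(e_1)=(2(n-1))^{-1/p}$. By the previous paragraph you may compute $\|\langle x,U\rangle\|_p$ with the radial density proportional to $e^{-h_K(\theta)^p}$, for which $\E R^p=n/p$. Under this density $\theta_2,\dots,\theta_n$ are conditionally i.i.d.\ given $\theta_1$. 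So $\theta_1$ has density proportional to $Z(s)^{n-1}$, where $Z(s)=\int_\R e^{-|s+t|^p-|s-t|^p}\,dt$ is even and log-concave. A second-order expansion in $s$, with Jensen for the lower bound, gives $e^{-Cps^2}\ls Z(s)/Z(0)\ls e^{-cps^2}$ for $|s|\ls1/p$ and $p\gr3$. Together with log-concavity beyond $1/p$, this gives $\|\theta_1\|_p\ls C/\sqrt n$ for $3\ls p\ls c\sqrt n$. Hence $\bigl\|\langle e_1/\|e_1\|_K,U\rangle\bigr\|_p\ls C(2p)^{1/p}/\sqrt n$, a factor of order $\sqrt p$ below what is needed.

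Adding to $Y$ an atom at $\pm Me_1$ of tiny mass barely changes $K$ but puts this bad direction into the support of $Y$. For such $Y$ your scheme cannot give better than order $\sqrt n$, although the lemma holds. The missing ingredient is a genuinely different argument that uses the fact that $K$ is the $L_p$-centroid body of $Y$ itself, and this is what the Lata\l a--Nayar theorem supplies. As a proof of the lemma, simply invoke that theorem, as the paper does, and check that its statement specializes to the constant $2\sqrt e\sqrt{(n+p)/p}$.
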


%%%%%%%%%%%%%%%%%%%%%%%%%%%%%%%%%%%%%%%%%%%%%%%%%%%%%%%%%%%%%%%%%%%%%%%%%%%%%%%%%%%%%%%%%%%%%%%%%%%%%%%%%%%%%%%%%%%%%%%%%%%%%%%%%%%%%%
\section{Moment comparison and \texorpdfstring{$L_2$}{L2}-Sudakov minoration}\label{sec:L2-proof}
%%%%%%%%%%%%%%%%%%%%%%%%%%%%%%%%%%%%%%%%%%%%%%%%%%%%%%%%%%%%%%%%%%%%%%%%%%%%%%%%%%%%%%%%%%%%%%%%%%%%%%%%%%%%%%%%%%%%%%%%%%%%%%%%%%%%%%

We begin with the proof of Theorem~\ref{th:intro-moment-comparison}.
We then record the unconditional extension and derive the sharp worst case $L_2$-Sudakov estimate stated in the Introduction.

\begin{proof}[Proof of Theorem~$\ref{th:intro-moment-comparison}{\rm (i)}$]
Put $L=\ln(en)$.
Since $\phi$ is finite and sublinear, there is a compact convex set
$$ K_\phi=\{y\in\R^n:\langle x,y\rangle\ls\phi(x)\text{ for every }x\in\R^n\} $$
such that $\phi=h_{K_\phi}$.
Set $b=\sup_{|x|\ls1}\phi(x)=\max_{y\in K_\phi}|y|$.
If $b=0$, then $\phi=0$ and there is nothing to prove.
Choose $y_0\in K_\phi$ with $|y_0|=b$.
Since $\phi$ is nonnegative and $\phi(x)\gr\langle x,y_0\rangle$, we have $\phi(x)\gr\bigl(\langle x,y_0\rangle\bigr)_+$.

The random variable $\xi=\langle X,y_0\rangle$ is centered and log-concave, and isotropy gives $\|\xi\|_2=b$.
Borell's regular moment comparison~\cite{Borell-1974} and centeredness therefore yield $b\ls C_1\|\xi\|_1=2C_1\E\xi_+\ls 2C_1\E\phi(X)$.
Moreover, $K_\phi\subseteq bB_2^n$, so the support function $\phi$ is $b$-Lipschitz.
The Gaussian concentration inequality gives, for every $q\gr1$,
$$ \|\phi(G)\|_q\ls\E\phi(G)+C_2\sqrt q\,b. $$
The reverse first moment comparison in~\eqref{eq:gauge-comparison}, together with the preceding estimate and the Gaussian moment estimate, now gives
$$ \|\phi(G)\|_q\ls C_3(\sqrt L+\sqrt q)\E\phi(X) \ls C\sqrt{L+q}\,\|\phi(X)\|_q, $$
which proves~\eqref{eq:reverse-moment-comparison}.
\end{proof}

The cube example below shows that both terms in the factor in~\eqref{eq:reverse-moment-comparison} are necessary.
Indeed, let $X$ be uniform on the isotropic cube $[-\sqrt3,\sqrt3]^n$ and take $\phi(x)=\|x\|_\infty$.
Then $\|\phi(X)\|_q\ls\sqrt3$, whereas
$$ \|\phi(G)\|_q\gr\max\left\{\E\|G\|_\infty,\|\langle G,e_1\rangle\|_q\right\} \gr c_1\max\{\sqrt{\ln(en)},\sqrt q\}\gr c\sqrt{\ln(en)+q}. $$

\begin{proof}[Proof of Theorem~$\ref{th:intro-moment-comparison}{\rm (ii)}$]
Put $L=\ln(en)$ and define $N(x)=\phi(x)+\phi(-x)$.
Then $N$ is a seminorm and $\phi\ls N$.
Let $ b_N=\sup_{|x|\ls1}N(x) $ be its Euclidean Lipschitz constant.
A standard moment consequence of the Poincar\'e inequality, see for example~\cite{Bobkov-Ledoux-1997}, gives
$$ \|N(X)\|_q\ls \E N(X)+C\psi(X)q b_N, \qquad q\gr2. $$
The upper first moment comparison in~\eqref{eq:gauge-comparison} yields
$$ \E N(X)\ls C\sqrt L\,\E N(G) \ls C\sqrt L\,\|N(G)\|_q. $$

It remains to compare $b_N$ with a Gaussian moment.
Since $N$ is a seminorm, there is a symmetric closed convex set $B\subseteq\R^n$ such that $N(x)=\sup_{y\in B}\langle x,y\rangle$ and $b_N=\sup_{y\in B}|y|$.
Choosing $y\in B$ with $|y|$ arbitrarily close to $b_N$, we have $N(G)\gr|\langle G,y\rangle|$.
Hence $\|N(G)\|_q\gr c\sqrt q\,b_N$.
Combining the preceding estimates,
$$ \|\phi(X)\|_q \ls\|N(X)\|_q \ls C\left(\sqrt L+\psi(X)\sqrt q\right)\|N(G)\|_q. $$
Finally, Gaussian symmetry and the triangle inequality give $\|N(G)\|_q\ls\|\phi(G)\|_q+\|\phi(-G)\|_q =2\|\phi(G)\|_q$, which proves~\eqref{eq:upper-moment-comparison} for $q\gr2$.

Let now $1\ls q\ls2$.
Monotonicity and the case $q=2$ give
$$ \|\phi(X)\|_q\ls C(\sqrt L+\psi(X))\|\phi(G)\|_2. $$
Write $\phi=h_K$ for a compact convex set $K$, let $b=\max_{y\in K}|y|$, and choose $y_0\in K$ with $|y_0|=b$.
Since $\phi(G)\gr(\langle G,y_0\rangle)_+$, we have $b\ls C\E\phi(G)$.
Gaussian concentration and monotonicity therefore give $\|\phi(G)\|_2\ls C\E\phi(G)\ls C\|\phi(G)\|_q$.
Since $q\gr1$, this proves~\eqref{eq:upper-moment-comparison} and completes the proof.
\end{proof}

\begin{proof}[Proof of Theorem~$\ref{th:intro-moment-comparison}{\rm (iii)}$]
Put $L=\ln(en)$.
Applying the Poincar\'e inequality to the linear functions $x\mapsto\langle x,\theta\rangle$ and using isotropy shows that $\psi(X)\gr1$.
Thus~\eqref{eq:two-sided-moment-range} implies $q\ls cL$ and $\psi(X)\sqrt q\ls\sqrt{cL}$.
The lower estimate follows from Theorem~\ref{th:intro-moment-comparison}(i), and the upper estimate follows from part~(ii).
Finally, Letwin's bound $\psi(X)\ls\psi_n\ls C L^{1/4}$ shows that the range contains $1\ls q\ls c\sqrt L$ after changing the absolute constant.
\end{proof}

\begin{theorem}\label{th:intro-unconditional-moment-comparison}
Under the assumptions of Theorem~$\ref{th:intro-moment-comparison}$, suppose in addition that the distribution of $X$ is unconditional.
Then, for every $q\gr1$,
$$ \frac{c}{\sqrt{\ln(en)+q}}\|\phi(G)\|_q \ls \|\phi(X)\|_q \ls C\sqrt{\ln(en)+q}\,\|\phi(G)\|_q. $$
\end{theorem}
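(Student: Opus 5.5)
The lower estimate is already contained in Theorem~\ref{th:intro-moment-comparison}(i), since $\sqrt{\ln(en)+q}$ is exactly the factor appearing there, so only the upper estimate needs proof. We follow the route indicated in the Introduction: Lata\l a's weak--strong moment theorem for unconditional log-concave vectors, combined with the first moment comparison~\eqref{eq:gauge-comparison} applied to the isotropic product exponential measure $\mathcal E$ (the vector with i.i.d.\ symmetric exponential coordinates of variance one).

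\emph{Step 1: reduction to a seminorm.} As in the proof of part~(ii), put $N(x)=\phi(x)+\phi(-x)$. Then $\phi\ls N$, and $\|N(G)\|_q\ls2\|\phi(G)\|_q$. It therefore suffices to prove $\|N(X)\|_q\ls C\sqrt{L+q}\,\|N(G)\|_q$ with $L=\ln(en)$. We may assume $N$ is a norm by a limiting argument, or by working on the quotient by its kernel; the weak moments are unaffected.

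\emph{Step 2: weak--strong for unconditional $X$.} Lata\l a's theorem for unconditional log-concave vectors gives
$$\|N(X)\|_q\ls C\bigl(\E N(X)+\sigma_{N,q}(X)\bigr),\qquad \sigma_{N,q}(X)=\sup_{y\in B}\|\langle X,y\rangle\|_q,$$
where $B$ is the symmetric set with $N=h_B$. For the weak term, each $\langle X,y\rangle$ is centered log-concave with $\|\cdot\|_2=|y|$, so Borell's inequality (Lemma~\ref{lem:Latala-tools}(2)) gives $\|\langle X,y\rangle\|_q\ls Cq|y|\ls Cq\,b_N$. On the other hand $\|N(G)\|_q\gr c\sqrt q\,b_N$. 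This yields only a factor $\sqrt q$, not the required $\sqrt{L+q}$ behaviour at the level of weak moments. The fix is to compare with $\mathcal E$ instead. For unconditional isotropic log-concave $X$, the contraction/comparison principle (Bobkov--Nazarov type domination $\|\langle X,y\rangle\|_q\ls C\|\langle\mathcal E,y\rangle\|_q$) bounds the weak moments by those of $\mathcal E$. For the exponential vector, Gluskin--Kwapie\'n gives $\|\langle \mathcal E,y\rangle\|_q\simeq q\|y\|_\infty+\sqrt q\,|y|$. Hence
$$\sigma_{N,q}(X)\ls C\sup_{y\in B}\bigl(q\|y\|_\infty+\sqrt q|y|\bigr).$$
The second part is $\ls C\|N(G)\|_q$. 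For the first part, use $|\langle G,y\rangle|$ again together with $\|y\|_\infty\ls|y|$. This gives $q\|y\|_\infty\ls \sqrt q\cdot\sqrt q|y|\ls C\sqrt q\,\|N(G)\|_q$, which is within the allowed factor $\sqrt{L+q}$.

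\emph{Step 3: first moment term.} Apply~\eqref{eq:gauge-comparison} upper bound: $\E N(X)\ls C\sqrt L\,\E N(G)\ls C\sqrt L\,\|N(G)\|_q$. Alternatively, unconditionality lets one compare $\E N(X)\ls C\E N(\mathcal E)$ and then apply~\eqref{eq:gauge-comparison} to $\mathcal E$, whose Poincar\'e constant is bounded. Either way the contribution is $\sqrt L$. Summing the two steps gives $\|N(X)\|_q\ls C(\sqrt L+\sqrt q)\|N(G)\|_q$.

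The main obstacle is Step~2: controlling the weak moments of $X$ in the direction $y$ by $\sqrt q$ times Gaussian ones. This needs the domination of unconditional log-concave one-dimensional marginals by exponential ones, and the Gluskin--Kwapie\'n formula to handle the $q\|y\|_\infty$ term. If that domination is unavailable in the needed form, I would fall back on the crude bound $q\,b_N$, which still gives $\sqrt q$ relative to $\|N(G)\|_q\gr c\sqrt q\,b_N$. So in fact Borell's inequality alone suffices, and the exponential comparison is needed only to justify that Lata\l a's theorem applies with the stated weak term.
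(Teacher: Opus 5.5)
Your lower bound and the reduction to the seminorm $N(x)=\phi(x)+\phi(-x)$ are fine. The gap is in Step~2: it rests on a misquoted form of Lata\l a's theorem. The unconditional result the paper cites, \cite[Theorem~3.1]{Latala-weak-strong-2011}, gives, for isotropic unconditional log-concave $X$, $\|N(X)\|_q\ls C\bigl(\E N(\mathcal E)+\sup_{N^*(y)\ls1}\|\langle y,X\rangle\|_q\bigr)$. The first-moment term there belongs to the product exponential vector $\mathcal E$, not to $X$. The inequality you write, with $\E N(X)$ and an absolute constant for an arbitrary norm, is not what the cited theorem provides. It is the unconditional case of Lata\l a's weak--strong conjecture itself, which \cite{Latala-weak-strong-2011} establishes only for normed spaces with bounded cotype constant, so you cannot take it as given. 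Nor can the correct form be converted into yours by comparing first moments. That would require $\E N(\mathcal E)\ls C\E N(X)$, which fails for the isotropic cube with $N=\|\cdot\|_\infty$: the left side is of order $\ln(en)$, while $\E N(X)\ls\sqrt3$. Your alternative in Step~3, $\E N(X)\ls C\E N(\mathcal E)$, goes in the opposite direction. So neither branch of Step~3 connects to a theorem you can actually invoke.

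The repair is the one you announce in your opening sentence but never carry out. With the correct form, apply the upper inequality in~\eqref{eq:gauge-comparison} directly to the isotropic log-concave vector $\mathcal E$. This gives $\E N(\mathcal E)\ls C\sqrt L\,\E N(G)\ls C\sqrt L\,\|N(G)\|_q$ with $L=\ln(en)$. No comparison between $X$ and $\mathcal E$ is needed, and neither is any information about the Poincar\'e constant of $\mathcal E$. For the weak term, your closing remark is correct and is exactly the paper's argument. By \eqref{eq:one-dimensional-moment-growth}, isotropy, and monotonicity of moments when $1\ls q\ls2$, one has $\sup_{N^*(y)\ls1}\|\langle y,X\rangle\|_q\ls Cq\,b_N$. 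Since $\|N(G)\|_q\gr c\sqrt q\,b_N$, this term is at most $C\sqrt q\,\|N(G)\|_q$. Your worry that a factor $\sqrt q$ falls short of $\sqrt{L+q}$ is unfounded, and the Bobkov--Nazarov and Gluskin--Kwapie\'n detour is unnecessary. With these two corrections your proof becomes the paper's.
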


\begin{proof}
The lower estimate is Theorem~\ref{th:intro-moment-comparison}(i), so it remains to prove the upper estimate.
Let $\mathcal E=(\mathcal E_1,\ldots,\mathcal E_n)$ have independent symmetric exponential coordinates of variance one and define $N(x)=\phi(x)+\phi(-x)$.
Then $N$ is a seminorm, $\phi\ls N$, and Gaussian symmetry gives $\|N(G)\|_q\ls2\|\phi(G)\|_q$.
Replacing $N$ by $N+\varepsilon|\cdot|$ and then letting $\varepsilon\downarrow0$, it is enough to treat the case where $N$ is a norm.
We write $N^*$ for its dual norm.

Lata\l a's unconditional weak--strong moment theorem~\cite[Theorem~3.1]{Latala-weak-strong-2011} yields, for every $q\gr1$,
\begin{equation}\label{eq:Latala-unconditional-weak-strong}
\|N(X)\|_q\ls C\left(\E N(\mathcal E)
+\sup_{N^*(y)\ls1}\|\langle y,X\rangle\|_q\right).
\end{equation}
The vector $\mathcal E$ is isotropic and log-concave, so the upper first moment comparison~\eqref{eq:gauge-comparison} gives, with $L=\ln(en)$,
$$ \E N(\mathcal E)\ls C\sqrt L\,\E N(G) \ls C\sqrt L\,\|N(G)\|_q. $$

Put $b_N=\sup_{N^*(y)\ls1}|y|$.
The one-dimensional moment estimate~\eqref{eq:one-dimensional-moment-growth}, together with isotropy and monotonicity of moments when $1\ls q\ls2$, implies
$$ \sup_{N^*(y)\ls1}\|\langle y,X\rangle\|_q\ls Cq b_N, \qquad q\gr1. $$
On the other hand, since $N(x)=\sup_{N^*(y)\ls1}\langle x,y\rangle$, the Gaussian moment of a linear functional gives $\|N(G)\|_q\gr c\sqrt q\,b_N, \; q\gr1$.
Consequently, the second term in~\eqref{eq:Latala-unconditional-weak-strong} is at most $C\sqrt q\,\|N(G)\|_q$.
Combining these estimates, we obtain
$$ \|\phi(X)\|_q\ls\|N(X)\|_q \ls C_1(\sqrt L+\sqrt q)\|N(G)\|_q \ls C\sqrt{L+q}\,\|\phi(G)\|_q, $$
which completes the proof.
\end{proof}

The product exponential example below shows that the upper factor also has the correct order.
Indeed, for the isotropic product exponential vector $\mathcal E$, we have $ \E\|\mathcal E\|_\infty\simeq\ln(en)$ and $\E\|G\|_\infty\simeq\sqrt{\ln(en)}, $ while $ \|\mathcal E_1\|_q\simeq q$ and $\|G_1\|_q\simeq\sqrt q$ for $q\gr2. $

Recall from the Introduction that $C_X$ denotes the $L_2$-Sudakov constant of an isotropic log-concave vector $X$.

\begin{theorem}\label{th:intro-Sudakov}
For every isotropic log-concave random vector $X$ in $\R^n$,
$$ C_X\ls C\sqrt{\ln(en)}. $$
\end{theorem}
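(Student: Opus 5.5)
The plan is to apply Theorem~\ref{th:intro-moment-comparison}(i) with $q=1$ to the support function of $T$, and then invoke the Gaussian Sudakov inequality~\eqref{eq:Gaussian-Sudakov}. Fix a nonempty bounded $T\subseteq\R^n$ and $\varepsilon>0$.

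\textbf{Step 1: reduction to a gauge.} The function $x\mapsto\sup_{t\in T}\langle t,x\rangle$ need not be nonnegative, so it is not a gauge in general. To fix this, I will translate $T$. Covering numbers are translation invariant, so $N(T-t_0,\varepsilon B_2^n)=N(T,\varepsilon B_2^n)$ for any $t_0\in T$. Since $X$ and $G$ are centered,
$$ \E\sup_{t\in T}\langle t-t_0,X\rangle=\E\sup_{t\in T}\langle t,X\rangle, $$
and the same identity holds for $G$. We may therefore assume $0\in T$. Then
$$ \phi(x)=\sup_{t\in T}\langle t,x\rangle=h_{\overline{\conv}\,T}(x) $$
is finite because $T$ is bounded, sublinear, and nonnegative because $0\in T$. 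So $\phi$ is a gauge.

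\textbf{Step 2: moment comparison.} Theorem~\ref{th:intro-moment-comparison}(i) with $q=1$ gives
$$ \E\phi(G)\ls C\sqrt{\ln(en)+1}\,\E\phi(X)\ls C'\sqrt{\ln(en)}\,\E\phi(X). $$
Equivalently, one can use the reverse inequality in~\eqref{eq:gauge-comparison} directly. Theorem~\ref{th:intro-moment-comparison} is stated for $n\gr2$; the case $n=1$ is handled by~\eqref{eq:gauge-comparison}, or by the one-dimensional Borell comparison $\E|\xi|\gr c\|\xi\|_2$.

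\textbf{Step 3: Gaussian Sudakov.} Combining Step~2 with~\eqref{eq:Gaussian-Sudakov} gives
$$ \E\sup_{t\in T}\langle t,X\rangle\gr \frac{c}{\sqrt{\ln(en)}}\,\E\sup_{t\in T}\langle t,G\rangle\gr\frac{c'\varepsilon}{\sqrt{\ln(en)}}\sqrt{\ln N(T,\varepsilon B_2^n)}. $$
By the definition of $C_X$, this yields $C_X\ls C\sqrt{\ln(en)}$.

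There is no real obstacle here. The only point that needs care is the centering and nonnegativity reduction in Step~1, which is what makes the gauge comparison applicable.
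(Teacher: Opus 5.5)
Your proposal is correct and matches the paper's proof: translate $T$ by some $t_0\in T$ so that $\sup_{t\in T}\langle t,\cdot\rangle$ becomes a gauge, apply the reverse first moment comparison (Theorem~\ref{th:intro-moment-comparison}(i) at $q=1$ is equivalent to \eqref{eq:gauge-comparison} here), and finish with the Gaussian Sudakov minoration \eqref{eq:Gaussian-Sudakov}. Your remark on the case $n=1$ is a harmless extra detail that the paper does not spell out.
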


\begin{proof}
Fix a nonempty bounded set $T\subseteq\R^n$ and $\varepsilon>0$.
Choose $t_0\in T$, put $T_0=T-t_0$ and define $\phi(x)=\sup_{t\in T_0}\langle t,x\rangle$.
Since $0\in T_0$, $\phi$ is a gauge; since $X$ and $G$ are centered, translation of $T$ does not change either expected supremum.
The reverse first moment comparison~\eqref{eq:gauge-comparison} and~\eqref{eq:Gaussian-Sudakov} give
$$ \E\sup_{t\in T}\langle t,X\rangle =\E\phi(X) \gr\frac{c}{\sqrt{\ln(en)}}\E\phi(G) \gr\frac{c\varepsilon}{\sqrt{\ln(en)}}\sqrt{\ln N(T,\varepsilon B_2^n)}. $$
\end{proof}

The logarithmic loss is necessary.
Indeed, if $X$ has independent coordinates uniform on $[-\sqrt3,\sqrt3]$ and $T=\{e_1,\ldots,e_n\}$, then $X$ is isotropic and log-concave and
$$ N\left(T,\frac12B_2^n\right)=n, \qquad \E\sup_{t\in T}\langle t,X\rangle\ls\sqrt3. $$
Hence $C_X\gr c\sqrt{\ln(en)}$.
Together with Theorem~\ref{th:intro-Sudakov}, this shows that the worst possible $L_2$-Sudakov constant is of order $\sqrt{\ln(en)}$.

The moment and quantile refinements of the preceding argument are recorded in Appendix~\ref{appendix:additional-sudakov}.

%%%%%%%%%%%%%%%%%%%%%%%%%%%%%%%%%%%%%%%%%%%%%%%%%%%%%%%%%%%%%%%%%%%%%%%%%%%%%%%%%%%%%%%%%%%%%%%%%%%%%%%%%%%%%%%%%%%%%%%%%%%%%%%%%%%%%%
\section{\texorpdfstring{$L_p$}{Lp}-Sudakov minoration}\label{sec:Lp-SMP}
%%%%%%%%%%%%%%%%%%%%%%%%%%%%%%%%%%%%%%%%%%%%%%%%%%%%%%%%%%%%%%%%%%%%%%%%%%%%%%%%%%%%%%%%%%%%%%%%%%%%%%%%%%%%%%%%%%%%%%%%%%%%%%%%%%%%%%

The preceding section gives the sharp worst case estimate in the Euclidean, or $L_2$, metric.
We now work with the moment metric $d_{X,p}$.
The purpose of this section is to prove the quantitative estimates summarized in Theorem~\ref{th:intro-SMP-summary} and to identify the ranges in which they improve the general bounds of Lata\l a.

\begin{theorem}\label{th:intro-Lp-SMP}
Let $X$ be isotropic and log-concave in $\R^n$, where $n\gr2$.
Then, for every $p\gr2$,
\begin{equation}\label{eq:intro-Lp-SMP}
X\text{ satisfies }\mathrm{SMP}_p\left(\frac{c}{\sqrt{\min\{p,n\}\ln(en)}}\right).
\end{equation}
Moreover, for every $n\gr1$, every nondegenerate $n$-dimensional log-concave random vector satisfies
\begin{equation}\label{eq:intro-full-SMP}
\mathrm{SMP}\left(\frac{c}{\sqrt{n\ln(en)}}\right).
\end{equation}
\end{theorem}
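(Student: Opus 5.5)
The plan is to reduce the $L_p$ separation hypothesis to a Euclidean one and then apply the reverse first moment comparison together with Gaussian Sudakov, exactly as in the proof of Theorem~\ref{th:intro-Sudakov}. Let $T$ be finite with $|T|\gr e^p$ and $d_{X,p}(s,t)\gr A$ for distinct $s,t\in T$. By Lemma~\ref{lem:Latala-tools}(2) and isotropy, $d_{X,p}(s,t)\ls Cp|s-t|$, so $T$ is $A/(Cp)$-separated in $|\cdot|$. This alone gives only the weaker constant $1/(p\sqrt{\ln(en)})$, so the Euclidean separation must be improved using $p\ls n$ and the centroid body $Z_p(X)$.

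\emph{Reduction for $2\ls p\ls n$.} I would not pass through pointwise Euclidean separation. Instead, I would use $\E\sup_{s,t\in T}\langle t-s,X\rangle=\E\phi(X)$ with $\phi(x)=\sup_{s,t\in T}\langle t-s,x\rangle$, which is a gauge (it is symmetric, nonnegative and sublinear). By \eqref{eq:gauge-comparison},
$$\E\phi(X)\gr c\,\E\phi(G)/\sqrt{\ln(en)}.$$
It then suffices to show that $\E\sup_{s,t}\langle t-s,G\rangle\gr cA/\sqrt p$ whenever $T$ is $A$-separated in $h_{Z_p(X)}$ and $|T|\gr e^p$. This is an $L_p$-Sudakov statement for the Gaussian measure, but with respect to the metric of the body $Z_p(X)$.

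\emph{Gaussian step.} I would handle this via $N(T,\cdot)$. Set $\varepsilon=A/\sqrt{p}$ (up to constants). I would bound the number of points of $T$ that can lie in a Euclidean ball of radius $\varepsilon$: their differences lie in $2\varepsilon B_2^n$ and are $A$-separated in the norm $h_{Z_p(X)}$. By the dual Sudakov inequality \eqref{eq:classical-Sudakov-covering} applied to $K=Z_p(X)^\circ$, the packing of $2\varepsilon B_2^n$ by $\tfrac{A}{2}Z_p(X)^\circ$ has logarithm at most
$$C\varepsilon^2\,\ell(Z_p(X)^\circ)^2/A^2=C\varepsilon^2\,\ell^*(Z_p(X))^2/A^2.$$
Using Lemma~\ref{lem:full-range-centroid-width} this is $\ls C\varepsilon^2 np\ln(e+p)/A^2$, which is too large; it is dimension dependent. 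The naive route therefore fails, and the better estimate has to come from elsewhere. I would instead use Latała's $\mathrm{SMP}_p$ reduction: by Lemma~\ref{lem:Latala-tools}(1) it suffices to treat the symmetric vector $X-X'$. Then, for a set with $|T|\gr e^p$ and $d_{X,p}$-separation $A$, extract (as Latała does) a subset of size $\gr e^{p}$ along which the Euclidean separation is $\gr cA/\sqrt{p}$ rather than $cA/p$. This is plausible because on average the $p$-th moments of marginals of an isotropic log-concave vector grow like $\sqrt p$, since $M^*(Z_p)\lesssim\sqrt{p}$ for $p\ls\sqrt n$. Gaussian Sudakov with $\ln N\gr cp$ at scale $cA/\sqrt p$ would then give $\E\phi(G)\gr cA$, and combined with the previous display this yields the claimed $c/\sqrt{p\ln(en)}$. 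This extraction step is where I expect the main obstacle. I would first try to prove it by an averaging argument over the $e^p$-element set, using the Lata\l a--Nayar bound \eqref{eq:LN-self-gauge} to compare $Z_p(X)$ with $\sqrt p B_2^n$ in volume or mean, since that bound gives exactly a $\sqrt{(n+p)/p}$ normalization.

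\emph{Case $p\gr n$ and the global statement.} For $p\gr n$, apply the $p=n$ case (constant $c/\sqrt{n\ln(en)}$) to the symmetrized vector $X-X'$, which has the same Sudakov properties up to constants and is isotropic after scaling by $\sqrt2$. Lemma~\ref{lem:Latala-tools}(3) with $d=n$ transfers $\mathrm{SMP}_n$ to $\mathrm{SMP}_p$ for all $p\gr n$ with a loss of $8$, and Lemma~\ref{lem:Latala-tools}(1) returns to $X$. For the global statement, I would use linear invariance (Lemma~\ref{lem:Latala-tools}(1)) to put $X$ in isotropic position. Then $\min\{p,n\}\ls n$ covers every $p\gr2$, the range $1\ls p<2$ follows from Lemma~\ref{lem:Latala-tools}(4), and the case $n=1$ is likewise covered by Lemma~\ref{lem:Latala-tools}(4).
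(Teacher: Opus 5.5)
There is a genuine gap in the case $2\ls p\ls n$, and it comes from a miscalculation in your first paragraph. Euclidean separation $A/(Cp)$ does not give only $1/(p\sqrt{\ln(en)})$. Take $\varepsilon=cA/p$ with $c$ small. Then distinct points of $T$ cannot share an $\varepsilon$-ball, so $\ln N(T,\varepsilon B_2^n)\gr\ln|T|\gr p$. Hence \eqref{eq:Gaussian-Sudakov} gives $\E\phi(G)\gr c\varepsilon\sqrt p=cA/\sqrt p$, which is exactly the Gaussian target you formulate in your second paragraph. Then \eqref{eq:gauge-comparison} yields $\E\phi(X)\gr cA/\sqrt{p\ln(en)}$.

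This is the paper's proof. It does not use $p\ls n$ at all; the restriction only reflects that for $p>n$ the route through Lemma~\ref{lem:Latala-tools}(3) gives the better constant.

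Having discarded this argument, you rely instead on extracting $e^p$ points with Euclidean separation $cA/\sqrt p$. You do not prove this, and it is false. Take the isotropic product exponential vector $\mathcal E$ and $T=\{e_1,\dots,e_n\}$ with $n\gr e^p$. Then $d_{\mathcal E,p}(e_i,e_j)\gr\|\mathcal E_i\|_p\gr cp$, so one may take $A\simeq p$. But every Euclidean distance in $T$ equals $\sqrt2$, which is below $cA/\sqrt p\simeq c\sqrt p$ once $p$ is large. Correspondingly, the bound $\E\phi(G)\gr cA$ you would derive also fails here: $\E\phi(G)=\E(\max_i G_i-\min_j G_j)\simeq\sqrt{\ln n}\simeq\sqrt p$ when $n\simeq e^p$. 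That bound would in any case give $\mathrm{SMP}_p(c/\sqrt{\ln(en)})$, not the constant you state. The dual Sudakov and Lata\l a--Nayar detours are unnecessary.

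Your treatment of $p>n$ matches the paper, once the case $p\ls n$ is repaired as above. For the global statement, note that a linear map cannot center a vector. The first assertion needs $\E X=0$, both through \eqref{eq:gauge-comparison} and through Lemma~\ref{lem:Latala-tools}(2). As in the paper, apply the first assertion to the symmetric isotropic vector $\frac1{\sqrt2}\Sigma^{-1/2}(Z-Z')$. Then use linear invariance to pass to $Z-Z'$, and return to $Z$ by Lemma~\ref{lem:Latala-tools}(1).
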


\begin{proof}
Assume first that $2\ls p\ls n$, and let $T\subseteq\R^n$ be finite with $|T|\gr e^p$ and
$$ \|\langle t-s,X\rangle\|_p\gr A \qquad(s,t\in T,\ s\ne t). $$
By~\eqref{eq:one-dimensional-moment-growth} and isotropy, $A\ls Cp\|\langle t-s,X\rangle\|_2=Cp|t-s|$.
Thus the points of $T$ are pairwise separated by at least $cA/p$.
For $\varepsilon=cA/p$, with a sufficiently small absolute constant $c>0$, $N(T,\varepsilon B_2^n)\gr |T|\gr e^p$.
Fix $t_0\in T$.
Since $X$ is centered, $\E\sup_{s,t\in T}\langle t-s,X\rangle \gr \E\sup_{t\in T}\langle t-t_0,X\rangle =\E\sup_{t\in T}\langle t,X\rangle$.
The comparison~\eqref{eq:gauge-comparison}, the Gaussian Sudakov estimate~\eqref{eq:Gaussian-Sudakov} and the preceding packing estimate give
$$ \E\sup_{s,t\in T}\langle t-s,X\rangle \gr\frac{cA}{\sqrt{p\ln(en)}}. $$
This proves~\eqref{eq:intro-Lp-SMP} for $2\ls p\ls n$.

Let $p>n$, let $X'$ be an independent copy of $X$, and put $Y=(X-X')/\sqrt2$.
Then $Y$ is symmetric, isotropic and log-concave.
By the preceding case, $Y$ satisfies $\mathrm{SMP}_n\left(\frac{c}{\sqrt{n\ln(en)}}\right)$.
Lemma~\ref{lem:Latala-tools}(3), followed by invariance under scalar multiplication, shows that $X-X'$ satisfies $\mathrm{SMP}_p\left(\frac{c}{\sqrt{n\ln(en)}}\right)$.
Lemma~\ref{lem:Latala-tools}(1) transfers this estimate to $X$.
This proves the first assertion.

Let now $Z$ be a nondegenerate $n$-dimensional log-concave random vector.
If $n=1$, Lemma~\ref{lem:Latala-tools}(4) gives the asserted global estimate after changing the absolute constant.
Assume $n\gr2$, let $Z'$ be an independent copy of $Z$, and let $\Sigma$ be the covariance matrix of $Z$.
The vector $W=\frac{1}{\sqrt2}\Sigma^{-1/2}(Z-Z')$ is isotropic, symmetric and log-concave.
By the first assertion and linear invariance, $Z-Z'$ satisfies $\mathrm{SMP}_p\left(\frac{c}{\sqrt{n\ln(en)}}\right),\; p\gr2$.
Lemma~\ref{lem:Latala-tools}(1) transfers this estimate to $Z$.
For $1\ls p\ls2$, Lemma~\ref{lem:Latala-tools}(4) gives an absolute constant, which is stronger.
Hence~\eqref{eq:intro-full-SMP} holds.
\end{proof}

\begin{proposition}\label{prop:ellstar-SMP}
Let $X$ be isotropic and log-concave in $\R^n$ and let $p\gr2$.
Then
$$ X\text{ satisfies }\mathrm{SMP}_p\left(\frac{cp}{\sqrt{\ln(en)}\,\ell^*(Z_p(X))}\right). $$
\end{proposition}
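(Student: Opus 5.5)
The approach follows the proof of Theorem~\ref{th:intro-Lp-SMP}, but with the Euclidean separation replaced by separation in the metric $d_{X,p}$ itself. That metric is the norm with unit ball $Z_p(X)^\circ$. The missing ingredient is then a covering estimate of $Z_p(X)^\circ$ by Euclidean balls, which the dual Sudakov inequality~\eqref{eq:classical-Sudakov-covering} supplies in terms of $\ell^*(Z_p(X))$.

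Fix $T$ finite with $|T|\gr e^p$ and $d_{X,p}(s,t)=h_{Z_p(X)}(t-s)\gr A$ for distinct $s,t\in T$. As before, choose $t_0\in T$; centeredness, \eqref{eq:gauge-comparison} applied to the gauge $x\mapsto\sup_{t\in T}\langle t-t_0,x\rangle$, and \eqref{eq:Gaussian-Sudakov} give
$$ \E\sup_{s,t\in T}\langle t-s,X\rangle\gr\frac{c}{\sqrt{\ln(en)}}\,\varepsilon\sqrt{\ln N(T,\varepsilon B_2^n)}\qquad\text{for every }\varepsilon>0. $$
It therefore suffices to find $\varepsilon\simeq pA/\ell^*(Z_p(X))$ with $N(T,\varepsilon B_2^n)\gr e^{cp}$. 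Since $T$ is $A$-separated in the norm $\|\cdot\|_{Z_p(X)^\circ}$, we have $N(T,\tfrac A2 Z_p(X)^\circ)\gr|T|\gr e^p$. Submultiplicativity then gives
$$ e^p\ls N\left(T,\tfrac A2Z_p(X)^\circ\right)\ls N(T,\varepsilon B_2^n)\,N\left(\varepsilon B_2^n,\tfrac A2Z_p(X)^\circ\right). $$
By the dual Sudakov inequality~\eqref{eq:classical-Sudakov-covering}, with $K=Z_p(X)^\circ$ and $\ell(Z_p(X)^\circ)=\E h_{Z_p(X)}(G)=\ell^*(Z_p(X))$,
$$ \ln N\left(\varepsilon B_2^n,\tfrac A2 Z_p(X)^\circ\right)\ls \frac{C\varepsilon^2\ell^*(Z_p(X))^2}{A^2}. $$
Choosing $\varepsilon=c_0\sqrt p\,A/\ell^*(Z_p(X))$ with $c_0$ small makes this at most $p/2$, so $\ln N(T,\varepsilon B_2^n)\gr p/2$. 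Substituting gives a lower bound of order $\frac{pA}{\sqrt{\ln(en)}\,\ell^*(Z_p(X))}$, as claimed.

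The covering step and the resulting $\sqrt p\cdot\sqrt p=p$ gain are routine. The only point needing care is the choice of $\varepsilon$, which must balance the entropy loss $\varepsilon^2\ell^*(Z_p(X))^2/A^2$ against $p$. No Euclidean lower bound on the separation is needed, unlike in Theorem~\ref{th:intro-Lp-SMP}. Combined with Lemma~\ref{lem:full-range-centroid-width}, this recovers the third term of~\eqref{eq:intro-SMP-summary}.
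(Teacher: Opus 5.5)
Your argument is correct and is essentially the paper's. Both factor the covering of $T$ (the paper uses $\conv(T-T)$) by a multiple of $Z_p(X)^\circ$ through a Euclidean ball, bound the dual factor by the dual Sudakov inequality with $\ell(Z_p(X)^\circ)=\ell^*(Z_p(X))$, and handle the Euclidean factor by Sudakov together with the reverse comparison \eqref{eq:gauge-comparison}; the paper optimizes the radius in the sum of the two entropy bounds, whereas you fix $\varepsilon$ so the dual term is at most $p/2$.

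There are two harmless slips. The scale in your reduction should read $\varepsilon\simeq\sqrt p\,A/\ell^*(Z_p(X))$, which is what you actually choose. Also, $N(T,\tfrac A2 Z_p(X)^\circ)\gr|T|$ can fail if $d_{X,p}(s,t)=A$ exactly for some pair, so use $\tfrac A3$ or $\tfrac A4$ instead, as the paper does.
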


\begin{proof}
Let $T\subseteq\R^n$ be finite, $|T|\gr e^p$, and assume that
$$ \|\langle t-s,X\rangle\|_p\gr A \qquad(s,t\in T,\ s\ne t). $$
Put $K=\conv(T-T)$ and $I=\E h_K(X)=\E\sup_{s,t\in T}\langle t-s,X\rangle$.
Fix $t_0\in T$.
Since $\|t-s\|_{Z_p(X)^\circ}=h_{Z_p(X)}(t-s)=\|\langle t-s,X\rangle\|_p$,
the set $T-t_0\subseteq K$ is $(A/2)Z_p(X)^\circ$-separated.
Hence the packing and covering inequalities give
$$ \ln N\left(K,\frac A4 Z_p(X)^\circ\right)\gr\ln|T|\gr p. $$
For every $r>0$, submultiplicativity of covering numbers and \eqref{eq:classical-Sudakov-covering} yield
$$ \ln N\left(K,\frac A4 Z_p(X)^\circ\right) \ls \ln N(K,rB_2^n)+\ln N\left(rB_2^n,\frac A4 Z_p(X)^\circ\right) \ls C\left(\frac{\ell^*(K)^2}{r^2}+\frac{\ell^*(Z_p(X))^2r^2}{A^2}\right),$$
where we used $\ell(Z_p(X)^\circ)=\ell^*(Z_p(X))$.
Choosing $ r^2=\frac{A\ell^*(K)}{\ell^*(Z_p(X))} $ and using the preceding packing estimate, we obtain $ p\ls (C\ell^*(K)\ell^*(Z_p(X)))/A. $
The function $h_K$ is a gauge.
The reverse comparison \eqref{eq:gauge-comparison} therefore gives $\ell^*(K)=\E h_K(G)\ls C\sqrt{\ln(en)}\,I$.
Combining the last two estimates proves
$$ I\gr\frac{cAp}{\sqrt{\ln(en)}\,\ell^*(Z_p(X))}, $$
as required.
\end{proof}

Combining Proposition~\ref{prop:ellstar-SMP} with the preceding estimates, for $2\ls p\ls n$ the vector $X$ satisfies
$$ \mathrm{SMP}_p\left(c\max\left\{\frac1p,\frac{1}{\sqrt{p\ln(en)}},\frac{p}{\sqrt{\ln(en)}\,\ell^*(Z_p(X))}\right\}\right). $$
Lemma~\ref{lem:full-range-centroid-width} therefore gives the explicit estimate
$$ X\text{ satisfies }\mathrm{SMP}_p\left(c\max\left\{\frac1p,\frac1{\sqrt{p\ln(en)}},\frac{\sqrt p}{\sqrt{n\ln(en)\ln(e+p)}}\right\}\right), \qquad 2\ls p\ls n. $$
The third term is stronger than $1/\sqrt{p\ln(en)}$, the term obtained from the Gaussian Sudakov argument, when $p^2\gr n\ln(e+p)$.

\begin{proof}[Proof of Theorem~$\ref{th:intro-SMP-summary}$]
The estimate~\eqref{eq:intro-SMP-summary} is the maximum of the general bound in Lemma~\ref{lem:Latala-tools}(4), the estimate of Theorem~\ref{th:intro-Lp-SMP} and the explicit consequence of Proposition~\ref{prop:ellstar-SMP} obtained from Lemma~\ref{lem:full-range-centroid-width}.
The global estimate~\eqref{eq:intro-global-SMP-summary} is~\eqref{eq:intro-full-SMP}.
\end{proof}

For $p\gr C\ln(en)$, the term $1/\sqrt{p\ln(en)}$ improves the general $1/p$ estimate of Lata\l a.
The global estimate~\eqref{eq:intro-full-SMP} improves the general dimensional bound $c/n$ to $c/\sqrt{n\ln(en)}$.
The centroid body width term gives a further improvement once $p^2\gr Cn\ln(e+p)$.
On the other hand, for symmetric vectors Lata\l a's estimate~\eqref{eq:Latala-large-p} is absolute for $p\gr2n\ln(n+e)$, so the bounds are complementary in the large $p$ range.
None of these estimates gives the conjectural dimension free $\mathrm{SMP}(c)$ for all $p$.

A complementary dimension dependent estimate for very large separated families is given in Proposition~\ref{prop:large-family} in Appendix~\ref{appendix:additional-sudakov}.

%%%%%%%%%%%%%%%%%%%%%%%%%%%%%%%%%%%%%%%%%%%%%%%%%%%%%%%%%%%%%%%%%%%%%%%%%%%%%%%%%%%%%%%%%%%%%%%%%%%%%%%%%%%%%%%%%%%%%%%%%%%%%%%%%%%%%%
\section{Weak--strong moments and mean norms of centroid bodies}\label{sec:weak-strong}
%%%%%%%%%%%%%%%%%%%%%%%%%%%%%%%%%%%%%%%%%%%%%%%%%%%%%%%%%%%%%%%%%%%%%%%%%%%%%%%%%%%%%%%%%%%%%%%%%%%%%%%%%%%%%%%%%%%%%%%%%%%%%%%%%%%%%%

The estimates of the preceding section give different Sudakov constants at different moment scales.
The weak--strong problem asks whether strong moments of a norm of a log-concave vector can be controlled, up to an absolute constant, by its first moment and the corresponding weak moment.
In the proof of Theorem~\ref{th:intro-weak-strong} below, rather than inserting one global Sudakov constant into the chaining argument, we use at each dyadic level the strongest among the three bounds in~\eqref{eq:intro-SMP-summary}.
This is the point at which the estimates at different scales from Section~\ref{sec:Lp-SMP} produce a quantitative gain.

We shall use the following form of Lata\l a's chaining estimate~\cite[Proposition~6.1 and Remark~6.2]{Latala-Sudakov-2014}.
Let $(X_t)_{t\in T}$ be a real process.
Let $T_k\subseteq T$ satisfy $|T_k|\ls e^{2^{k+1}}$, let $\pi_k:T\to T_k$, and assume that $T_{k_1}=T$ and $\pi_{k_1}=\mathrm{id}$.
If $2^{k_0-1}\ls p\ls2^{k_0}$ and $k_0\ls k_1-1$, then
\begin{equation}\label{eq:Latala-chaining}
\left\|\sup_{t\in T}|X_t|\right\|_p \ls C\left(\sup_{t\in T}\sum_{k=k_0+1}^{k_1}\|X_{\pi_k(t)}-X_{\pi_{k-1}(t)}\|_{2^k} +\sup_{t\in T_{k_0}}\|X_t\|_p\right),
\end{equation}
while for $k_0\gr k_1$ the left hand side is at most $C\sup_{t\in T}\|X_t\|_p$.
A standard volumetric argument provides a $1/2$-net $\mathcal N$ of the dual unit ball, in the dual norm, with $|\mathcal N|\ls5^n$ and
\begin{equation}\label{eq:dual-net-norm}
\|y\|\ls2\max_{t\in\mathcal N}|\langle t,y\rangle|.
\end{equation}
Thus only the three scale dependent Sudakov estimates summarized in~\eqref{eq:intro-SMP-summary} enter the proof.

\begin{proof}[Proof of Theorem~$\ref{th:intro-weak-strong}$]
When $n=1$, every norm has the form $\|x\|=a|x|$ for some $a>0$, and hence the left hand side of~\eqref{eq:weak-strong-scale-dependent} is exactly $\sigma_p(Y)$.
We may therefore assume that $n\gr2$ and put $L=\ln(en)$.
We shall use the elementary bound $L\ls C\sqrt n$.
For $q\gr2$ define
\begin{equation}\label{eq:bq-definition}
b_q=\min\left\{ q,\;\sqrt{\min\{q,n\}L},\; \frac{\sqrt L\,\ell^*(Z_q(X))}{q} \right\}.
\end{equation}
Lemma~\ref{lem:Latala-tools}(4), Theorem~\ref{th:intro-Lp-SMP} and Proposition~\ref{prop:ellstar-SMP} imply that $X$ satisfies
\begin{equation}\label{eq:bq-SMP}
\mathrm{SMP}_q(c/b_q),\qquad q\gr2.
\end{equation}

Let $k_1$ be minimal with $2^{k_1+1}\gr n\ln5$.
We claim that
\begin{equation}\label{eq:bq-sum}
\sum_{k=1}^{k_1}b_{2^k}\ls C n^{1/4}\sqrt L\,\ln(e+L).
\end{equation}
For $2\ls q\ls\sqrt n$, the second term in~\eqref{eq:bq-definition} gives $b_q\ls\sqrt{qL}$, and hence
$$ \sum_{2^k\ls\sqrt n}b_{2^k}\ls Cn^{1/4}\sqrt L. $$
For $\sqrt n<q\ls\sqrt n\,L$, Borell's moment comparison and~\eqref{eq:Paouris-small-p-Mstar} give
$$ \ell^*(Z_q(X))\ls C\frac q{\sqrt n}\ell^*(Z_{\sqrt n}(X))\ls Cqn^{1/4}. $$
Therefore the third term in~\eqref{eq:bq-definition} yields $b_q\ls Cn^{1/4}\sqrt L$.
There are at most $C\ln(e+L)$ dyadic levels in this range, and their contribution is at most $Cn^{1/4}\sqrt L\,\ln(e+L)$.
For $\sqrt n\,L<q\ls n$, Lemma~\ref{lem:full-range-centroid-width} gives
$$ b_q\ls C\sqrt{\frac{nL\ln(e+q)}q}\ls C\frac{\sqrt n\,L}{\sqrt q}, $$
and the dyadic sum over this range is at most $Cn^{1/4}\sqrt L$.
Finally, if $n<q\ls2^{k_1}$, Borell's comparison with $Z_n(X)$ and Lemma~\ref{lem:full-range-centroid-width} give $b_q\ls CL$; this concerns only an absolute number of dyadic levels and is absorbed by the preceding bound.
This proves~\eqref{eq:bq-sum}.

Let $T$ be a $1/2$-net of the dual unit ball as in~\eqref{eq:dual-net-norm}; thus $|T|\ls5^n$ and $\|y\|\ls2\max_{t\in T}|\langle t,y\rangle|$.
Fix a sufficiently large absolute constant $C_0$.
For $q\gr2$, put $a_q=C_0b_q\,\E\|X\|$ and let $S_q\subseteq T$ be maximal $a_q$-separated for $d_{X,q}$.
If $|S_q|\gr e^q$, then~\eqref{eq:bq-SMP} gives
$$ 2\E\|X\|\gr\E\sup_{s,t\in S_q}\langle t-s,X\rangle \gr ca_q/b_q=cC_0\E\|X\|, $$
a contradiction for large $C_0$.
Hence $|S_q|<e^q$, and maximality makes $S_q$ an $a_q$-net of $T$ for $d_{X,q}$.

For $0\ls k<k_1$ set $ T_k=S_{2^{k+1}}, $ and set $T_{k_1}=T$.
By the preceding cardinality bound, $|T_k|\ls e^{2^{k+1}}$ for all $0\ls k\ls k_1$.
For $0\ls k<k_1$, choose a map $\pi_k:T\to T_k$ such that
$$ d_{X,2^{k+1}}(t,\pi_k(t))\ls a_{2^{k+1}}, \qquad t\in T, $$
and put $\pi_{k_1}=\mathrm{id}$.
For $1\ls k<k_1$, the triangle inequality, the choice of $\pi_k$ and monotonicity of moments give
\begin{equation}\label{eq:chain-increment-bound}
 d_{X,2^k}(\pi_k(t),\pi_{k-1}(t)) \ls d_{X,2^k}(\pi_k(t),t)+d_{X,2^k}(t,\pi_{k-1}(t)) \ls a_{2^{k+1}}+a_{2^k}.
\end{equation}
At the last level, the choice of $\pi_{k_1-1}$ gives $d_{X,2^{k_1}}(\pi_{k_1}(t),\pi_{k_1-1}(t)) \ls a_{2^{k_1}}$.
By~\eqref{eq:weak-domination}, the same bounds hold with $Y$ in place of $X$.

Let $k_0$ be the smallest positive integer such that $2^{k_0}\gr p$.
If $k_0\ls k_1-1$, apply~\eqref{eq:Latala-chaining} to the process $X_t=\langle t,Y\rangle$.
Using~\eqref{eq:chain-increment-bound}, the estimate at the last level, and the fact that $T_{k_0}$ is contained in the dual unit ball, we obtain
$$ \left\|\max_{t\in T}|\langle t,Y\rangle|\right\|_p \ls C\left(\sum_{k=k_0+1}^{k_1-1}(a_{2^k}+a_{2^{k+1}})+a_{2^{k_1}}+\sigma_p(Y)\right) \ls C\left(\sum_{k=1}^{k_1}a_{2^k}+\sigma_p(Y)\right).$$
By the definition of $a_q$ and~\eqref{eq:bq-sum},
$$ \sum_{k=1}^{k_1}a_{2^k} \ls C n^{1/4}\sqrt L\,\ln(e+L)\,\E\|X\|. $$
If $k_0\gr k_1$, the terminal case of~\eqref{eq:Latala-chaining} gives directly $\left\|\max_{t\in T}|\langle t,Y\rangle|\right\|_p \ls C\sigma_p(Y)$.
Combining the two cases with~\eqref{eq:dual-net-norm} proves~\eqref{eq:weak-strong-scale-dependent}.
Taking $Y=X$ gives~\eqref{eq:weak-strong-X}.
\end{proof}

If one uses only the constants from Theorem~\ref{th:intro-Lp-SMP} at the successive levels of the same chain, one obtains the weaker coefficient $C\sqrt{n\ln(en)}$.
Proposition~\ref{prop:ellstar-SMP}, together with the Paouris and Giannopoulos, Pafis and Tziotziou width estimates and Borell's comparison between consecutive moment scales, reduces this coefficient to $Cn^{1/4}\sqrt{\ln(en)}\,\ln(e+\ln(en))$.
The factor $\ln(e+\ln(en))$ comes from summing the intermediate dyadic range $\sqrt n<q\ls\sqrt n\ln(en)$.
We do not address whether this factor is necessary.
Removing it would require either a sharper full range estimate for $\ell^*(Z_q(X))$ in this range or a chaining argument that does not pay separately at every intermediate scale.
Lata\l a's result~\cite[Corollary~6.4]{Latala-Sudakov-2014} shows that $\mathrm{SMP}(\alpha)$ implies
$$ \left(\E\|X\|^p\right)^{1/p} \ls C\left(\frac1\alpha\max\left\{1,\ln\left(\frac{en}{p}\right)\right\}\E\|X\|+\sigma_p(X)\right). $$
Thus the global estimate~\eqref{eq:intro-full-SMP} alone would introduce an additional logarithmic factor.

\paragraph{Mean norms of \texorpdfstring{$L_p$}{Lp}-centroid bodies.} 
The weak--strong argument uses centroid bodies through their mean widths.
We now turn to the dual mean norm.
Earlier estimates were obtained by Giannopoulos, Stavrakakis, Tsolomitis and Vritsiou, who proved in~\cite[Theorem~7.2]{Giannopoulos-Stavrakakis-Tsolomitis-Vritsiou-2015} that
$$ M(Z_p(X))\ls C\frac{(\ln p)^{5/6}}{p^{1/6}},\qquad 2\ls p\ls n^{3/7}, $$
and by Giannopoulos and Milman, who proved in~\cite[Theorem~14]{Giannopoulos-Milman-2014} that
$$ M(Z_p(X))\ls C\frac{\sqrt{\ln p}}{p^{1/4}},\qquad 2\ls p\ls\bigl(n\ln(e+n)\bigr)^{2/5}. $$
These estimates can be stronger for some small values of $p$, whereas the estimate below holds throughout $2\ls p\ls n$ and reaches the endpoint $p=n$.
The result is a short consequence of the reverse gauge comparison and the theorem of Lata\l a and Nayar, and it serves as a bridge from the estimates for support functions above to the polar metrics used in the next section.

Very recently, Brazitikos and Pandis~\cite{Brazitikos-Pandis-2026} used a quadratic aggregate of dyadic centroid bodies to give a deterministic
geometric proof that $M(K)\ls C \ln(en)/\sqrt n$ for every origin symmetric convex body $K$ whose uniform probability measure is isotropic. 
Their result concerns the mean norm of the body itself and is methodologically distinct from the comparison between Gaussian and log-concave vectors used below.
Its numerical order is weaker than the optimal estimate $M(K)\ls C\sqrt{\ln(en)/n}$.

\begin{theorem}
For every isotropic log-concave random vector $X$ in $\R^n$ and every $p\gr2$,
\begin{equation}\label{eq:main-all-q}
M(Z_p(X))\ls C\sqrt{\frac{(n+p)\ln(en)}{np}}.
\end{equation}
In particular, for $2\ls p\ls n$,
\begin{equation}\label{eq:main-logarithmic}
\sqrt p\,M(Z_p(X))\ls C\sqrt{\ln(en)}.
\end{equation}
\end{theorem}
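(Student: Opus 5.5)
The plan is to apply the reverse first moment comparison \eqref{eq:gauge-comparison} to the gauge $\phi(x)=\|x\|_{Z_p(X)}$, and then to bound $\E\|X\|_{Z_p(X)}$ with the Lata\l a--Nayar estimate \eqref{eq:LN-self-gauge}. Since $X$ is isotropic it is nondegenerate, so $Z_p(X)$ is an origin symmetric convex body. Its Minkowski functional $\|\cdot\|_{Z_p(X)}$ is therefore a norm, and in particular a gauge. The left inequality in \eqref{eq:gauge-comparison} gives
$$ \ell(Z_p(X))=\E\|G\|_{Z_p(X)}\ls C\sqrt{\ln(en)}\,\E\|X\|_{Z_p(X)}. $$

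For the right hand side we apply \eqref{eq:LN-self-gauge} with $Y=X$, together with monotonicity of moments ($p\gr2\gr1$):
$$ \E\|X\|_{Z_p(X)}\ls\left(\E\|X\|_{Z_p(X)}^p\right)^{1/p}\ls2\sqrt e\sqrt{\frac{n+p}{p}}. $$
Combining the two displays yields $\ell(Z_p(X))\ls C\sqrt{\ln(en)(n+p)/p}$.

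It remains to convert to $M$ using \eqref{eq:ell-M-relations}, namely $M(Z_p(X))\simeq\ell(Z_p(X))/\sqrt n$. This gives $M(Z_p(X))\ls C\sqrt{(n+p)\ln(en)/(np)}$, which is \eqref{eq:main-all-q}. For $2\ls p\ls n$ we have $n+p\ls2n$, so \eqref{eq:main-all-q} reduces to $\sqrt p\,M(Z_p(X))\ls C\sqrt{\ln(en)}$, which is \eqref{eq:main-logarithmic}.

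There is no serious obstacle in this argument. The only points to check are that the self-gauge bound \eqref{eq:LN-self-gauge} is applied to the same vector $X$ that defines $Z_p(X)$, and that the comparison \eqref{eq:gauge-comparison} is used in its reverse direction, which bounds the Gaussian mean by the log-concave mean. The case $n=1$ is trivial and is absorbed into the constants.
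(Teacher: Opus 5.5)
Your proposal is correct and is exactly the paper's proof. The paper likewise applies the reverse inequality in~\eqref{eq:gauge-comparison} to the norm $\|\cdot\|_{Z_p(X)}$, bounds $\E\|X\|_{Z_p(X)}$ via the Lata\l a--Nayar estimate~\eqref{eq:LN-self-gauge} with H\"older, converts using $\ell(K)\simeq\sqrt n\,M(K)$, and then uses $n+p\ls 2n$ when $p\ls n$.
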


\begin{proof}
Apply the left hand inequality in~\eqref{eq:gauge-comparison} to the norm $\phi(x)=\|x\|_{Z_p(X)}$.
Then
$$ \E\|G\|_{Z_p(X)}\ls C\sqrt{\ln(en)}\E\|X\|_{Z_p(X)}. $$
By~\eqref{eq:LN-self-gauge} and H\"older's inequality,
$$ \E\|X\|_{Z_p(X)} \ls 2\sqrt e\sqrt{\frac{n+p}{p}}. $$
Moreover,~\eqref{eq:ell-M-relations} gives
$$ \E\|G\|_{Z_p(X)}\simeq\sqrt n\,M(Z_p(X)). $$
Combining the last three estimates proves~\eqref{eq:main-all-q}.
If $p\ls n$, then $(n+p)/n\ls2$, and~\eqref{eq:main-logarithmic} follows.
\end{proof}

\begin{corollary}
Let $X$ be isotropic and log-concave in $\R^n$.
For every $2\ls p\ls n$,
$$ M(Z_p(X))M^*(Z_p(X))\ls C\sqrt{\ln(en)\ln(e+p)}. $$
If $2\ls p\ls\sqrt n$, then
$$ M(Z_p(X))M^*(Z_p(X))\ls C\sqrt{\ln(en)}. $$
\end{corollary}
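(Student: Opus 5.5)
The plan is to multiply the mean norm bound~\eqref{eq:main-logarithmic} by the available mean width estimates for $Z_p(X)$.

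For the first assertion, fix $2\ls p\ls n$. Since $p\ls n$, the preceding theorem gives
$$ M(Z_p(X))\ls C\sqrt{\frac{\ln(en)}{p}}. $$
Lemma~\ref{lem:full-range-centroid-width} gives
$$ M^*(Z_p(X))\ls C\sqrt{p\ln(e+p)}. $$
Multiplying the two bounds, the factors $\sqrt p$ cancel, and we obtain
$$ M(Z_p(X))M^*(Z_p(X))\ls C\sqrt{\ln(en)\ln(e+p)}. $$

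For the second assertion, take $2\ls p\ls\sqrt n$. In this range we replace Lemma~\ref{lem:full-range-centroid-width} by Paouris' estimate~\eqref{eq:Paouris-small-p-Mstar}, which gives $M^*(Z_p(X))\ls C\sqrt p$. Since $p\ls\sqrt n\ls n$, inequality~\eqref{eq:main-logarithmic} still applies. The same cancellation then yields
$$ M(Z_p(X))M^*(Z_p(X))\ls C\sqrt{\ln(en)}. $$

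There is no genuine obstacle here. The only points to check are that both cited estimates are valid on the stated ranges of $p$ and that the normalizations agree. Both estimates are stated in terms of $M^*$, which is the normalization used here: the Giannopoulos--Pafis--Tziotziou width $w$ coincides with $M^*$, and Paouris' bound is likewise stated for $M^*$. Hence~\eqref{eq:ell-M-relations} is not needed.
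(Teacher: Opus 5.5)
Your proposal is correct and is the paper's argument: multiply \eqref{eq:main-logarithmic} by the mean width bound from Lemma~\ref{lem:full-range-centroid-width} for $2\ls p\ls n$, and by Paouris' estimate~\eqref{eq:Paouris-small-p-Mstar} for $2\ls p\ls\sqrt n$. Your check that both width bounds are stated for $M^*$ is also accurate, so \eqref{eq:ell-M-relations} is indeed not needed.
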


\begin{proof}
Multiply~\eqref{eq:main-logarithmic} by the corresponding estimate for $M^*(Z_p(X))$ from Lemma~\ref{lem:full-range-centroid-width}.
In the range $p\ls\sqrt n$, use~\eqref{eq:Paouris-small-p-Mstar} instead.
\end{proof}

The cube shows that the logarithmic factor in \eqref{eq:main-logarithmic} cannot be removed uniformly.
Indeed, if $X$ is uniform on the isotropic cube $Q_n=[-\sqrt3,\sqrt3]^n$, then $Z_n(X)\simeq Q_n$ (see, for example,~\cite{BGVV-book}) and $M(Q_n)\simeq\sqrt{\ln(en)/n}$, so
$$ \sqrt n\,M(Z_n(X))\gr c\sqrt{\ln(en)}. $$
The factor $\sqrt p$ is the natural normalization, since $Z_p(G)\simeq\sqrt p\,B_2^n$ for a standard Gaussian vector $G$.

The first part of the paper uses $M^*(Z_p(X))$, equivalently the Gaussian mean of the support function, to control Sudakov constants.
The preceding estimates treat the dual parameter $M(Z_p(X))$.
We now pass from these average parameters to the polar metrics $\mathcal I_r(X)Z_r(X)^\circ$, which form the second aspect of the same family of centroid bodies.

%%%%%%%%%%%%%%%%%%%%%%%%%%%%%%%%%%%%%%%%%%%%%%%%%%%%%%%%%%%%%%%%%%%%%%%%%%%%%%%%%%%%%%%%%%%%%%%%%%%%%%%%%%%%%%%%%%%%%%%%%%%%%%%%%%%%%%
\section{Entropy of polar centroid bodies}\label{sec:generalized}
%%%%%%%%%%%%%%%%%%%%%%%%%%%%%%%%%%%%%%%%%%%%%%%%%%%%%%%%%%%%%%%%%%%%%%%%%%%%%%%%%%%%%%%%%%%%%%%%%%%%%%%%%%%%%%%%%%%%%%%%%%%%%%%%%%%%%%

We now turn to the second aspect of the framework of centroid bodies in which the polar bodies $Z_r(X)^\circ$ provide the natural self generated metrics.
Mendelson, Milman and Paouris proposed the generalized dual Sudakov estimate
$$ \mathsf M\left(Z_p(\mu),C\left(\int_{\R^n}\|x\|_K\,d\mu(x)\right)K\right)\ls e^{Cp},\qquad p\gr1, $$
for an origin symmetric log-concave probability measure $\mu$ and an origin symmetric convex body $K$.
Their program first proves a dimension dependent weak estimate and then seeks a separation preserving dimension reduction, a one sided small ball analogue of the Johnson--Lindenstrauss lemma.
They establish this reduction for ellipsoids and, up to logarithmic losses, for cubes.
Here the target is the self generated body $Z_r(X)^\circ$.
Rather than constructing a new dimension reduction map, we compare this target with the covariance ellipsoid and apply their regular ellipsoidal entropy theorem.
The resulting Theorem~\ref{th:polar-centroid-entropy} is dimension free, but it retains explicit dependence on $r$.
At the conjectural scale $u=1$ it gives a bound of order $\exp(Cpr)$ in the isotropic case, while the $e^{Cp}$ packing size is obtained at the larger scale $u=\sqrt r$.
Thus the result is a dimension free statement for the self generated target, not a proof of the full Mendelson--Milman--Paouris conjecture.

\paragraph{The self generated polar metric}

Recall that for a random vector $X$ and an origin symmetric convex body $K$, we denote $I_1(X,K)=\E\|X\|_K$ and $I_1^*(X,K)=\E h_K(X)$ whenever these expectations are finite.
We also set $\mathcal I_r(X)=I_1(X,Z_r(X)^\circ)=\E h_{Z_r(X)}(X)$.

\begin{lemma}\label{lem:ellipsoidal-regular-entropy}
There is an absolute constant $C\gr1$ with the following property.
Let $X$ be a symmetric nondegenerate log-concave random vector in $\R^n$ and let $D$ be an origin symmetric ellipsoid.
Then, for every $p\gr1$ and every $t>0$,
\begin{equation}\label{eq:ellipsoidal-regular-entropy}
\ln\mathsf M\left(Z_p(X),CtI_1(X,D)D\right) \ls Cp\left(\frac1{t^2}+\frac1t\right).
\end{equation}
\end{lemma}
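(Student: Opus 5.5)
The plan is to reduce to the Euclidean ball by a linear change of variables and then invoke the regular ellipsoidal entropy theorem of Mendelson, Milman and Paouris~\cite{Mendelson-Milman-Paouris}, which is what the surrounding discussion in this section indicates. Write $D=TB_2^n$ for an invertible linear map $T$ and set $X'=T^{-1}X$. Then $X'$ is symmetric, nondegenerate and log-concave, and $Z_p(X')=T^{-1}Z_p(X)$. Moreover $\|x\|_D=|T^{-1}x|$, so $I_1(X,D)=\E|X'|=I_1(X',B_2^n)$. Packing numbers are invariant under applying the same linear map to both sets, so
$$ \mathsf M\left(Z_p(X),CtI_1(X,D)D\right)=\mathsf M\left(Z_p(X'),Ct\,\E|X'|\,B_2^n\right). $$
Hence it suffices to treat $D=B_2^n$, with the constant $C$ independent of $T$, $n$ and the distribution.

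For $D=B_2^n$, I would quote the Mendelson--Milman--Paouris estimate in its regular form: $\ln\mathsf M(Z_p(\mu),t\int|x|\,d\mu\,B_2^n)\ls Cp(t^{-2}+t^{-1})$. This is exactly the dimension-free statement they prove for ellipsoids, and it is the ingredient called ``their regular ellipsoidal entropy theorem'' above. The only checks are that their normalization of $I_1$ and of the separation body matches ours up to absolute constants, and that the symmetric hypothesis in their theorem is exactly ours.

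If a self-contained argument is preferred, I would organize it as follows. The $t^{-2}$ term should come from the classical Sudakov bound~\eqref{eq:classical-Sudakov-covering}: $\ln N(Z_p(X),sB_2^n)\ls C\ell^*(Z_p(X))^2/s^2$. Here $\ell^*(Z_p(X))=\E_G\|\langle X,G\rangle\|_p\ls C\sqrt p\,(\E|X|^p)^{1/p}$. Paouris' deviation inequality gives $(\E|X|^p)^{1/p}\ls C(\E|X|+\sigma_p(X))$, where $\sigma_p(X)=\sup_{|\theta|\ls1}\|\langle X,\theta\rangle\|_p$. The contribution of $\E|X|$ then gives exactly $Cp/t^2$ at $s=t\E|X|$.

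The main obstacle is the contribution of the weak moment $\sigma_p(X)$. A naive bound $\sigma_p\ls Cp\,\E|X|$ destroys dimension-freeness in $p$. This is where the linear $1/t$ term must come from. I would first try to split $X$ along the directions where $\|\langle X,\theta\rangle\|_p$ is large, that is, the few large axes of $Z_p(X)$ compared with $\E|X|B_2^n$. On the low-dimensional span of those directions, I would apply a volumetric packing bound inside $Z_p$ of the marginal. The number of such directions is controlled by $\E|X|^2\gr\sum_i\|\langle X,\theta_i\rangle\|_2^2$ together with Borell's comparison $\|\cdot\|_p\ls Cp\|\cdot\|_2$, and it should scale like $p/t$. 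On the complement, I would use Sudakov as above. Since this step is precisely the delicate part of~\cite{Mendelson-Milman-Paouris}, the primary route is to cite their theorem after the reduction.
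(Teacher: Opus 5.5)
Your primary route is the paper's: it cites the Mendelson--Milman--Paouris regular entropy theorem for ellipsoids. Your linear reduction to $B_2^n$ is harmless but unnecessary. The normalization check you defer is the only real step: their Theorem~7.4 is stated with the $e^{-1}$-quantile $m_1(\mu,D)=\sup\{a>0:\mu(aD)\ls e^{-1}\}$ rather than with $I_1(X,D)$. The paper converts using their Lemma~2.1, $m_1(\mu,D)\simeq I_1(X,D)$, plus an absolute factor from their disjoint-translates packing convention. The self-contained sketch is not needed, and, as you acknowledge, its handling of the weak-moment term $\sigma_p(X)$ is not yet an argument.
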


\begin{proof}
Using the notation of Mendelson, Milman and Paouris, for an origin symmetric convex body $K$ and a log-concave probability measure $\eta$, put
$$ m_1(\eta,K)=\sup\{a>0:\eta(aK)\ls e^{-1}\},\qquad I_1(\eta,K)=\int_{\R^n}\|x\|_K\,d\eta(x). $$
Thus $m_1(\eta,K)$ is the $e^{-1}$ quantile scale of the gauge $\|\cdot\|_K$, while $I_1(\eta,K)$ is its mean.
Let $\mu$ be the law of $X$.
Mendelson, Milman and Paouris~\cite[Lemma~2.1]{Mendelson-Milman-Paouris} proved that $m_1(\eta,K)\simeq I_1(\eta,K)$.
Their regular entropy estimate~\cite[Theorem~7.4]{Mendelson-Milman-Paouris} gives
$$ \ln\mathsf M\left(Z_p(X),t\,m_1(\mu,D)D\right) \ls Cp\left(\frac1{t^2}+\frac1t\right), $$
up to an absolute factor arising from their packing convention based on disjoint translates.
The comparison of $m_1$ and $I_1$ proves~\eqref{eq:ellipsoidal-regular-entropy}.
\end{proof}

For uniform measures on isotropic convex bodies, Giannopoulos, Paouris and Vritsiou~\cite[Proposition~3.2]{Giannopoulos-Paouris-Vritsiou-2012} proved, in particular, the lower estimate $I_1(K,Z_q(K)^\circ)\gr c\sqrt{nq}$ for $1\ls q\ls n,$ and raised the question of a reverse estimate of the same order.
Skarmogiannis~\cite[Theorem~1.6]{Skarmogiannis-2023} subsequently obtained the desired $\sqrt q$ dependence up to logarithmic factors; more precisely, if $\sigma_n\ls C(\ln(en))^\gamma$, his argument gives an upper bound with a factor $L_K^2(\ln(en))^{\gamma+3}$.
Using the current slicing bound together with the first moment comparison~\eqref{eq:gauge-comparison} and Lemma~\ref{lem:full-range-centroid-width}, this logarithmic loss can be reduced to
$ I_1(K,Z_q(K)^\circ)\ls C\sqrt{nq\ln(en)\ln(e+q)}$ for $1\ls q\ls n. $
Indeed, if $Y$ is uniformly distributed on $K$ and $X=Y/L_K$, then $ I_1(K,Z_q(K)^\circ)=L_K^2\mathcal I_q(X), $ while $ \mathcal I_q(X)\ls C\sqrt{\ln(en)}\,\ell^*(Z_q(X))\ls C\sqrt{nq\ln(en)\ln(e+q)}. $
For $q\ls\sqrt n$, Paouris' estimate~\eqref{eq:Paouris-small-p-Mstar} gives the sharper bound $ I_1(K,Z_q(K)^\circ)\ls C\sqrt{nq\ln(en)}. $
Lemma~\ref{lem:Iq-lower-bound} records the lower estimate in the more general setting of arbitrary isotropic log-concave vectors.

\begin{lemma}\label{lem:Iq-lower-bound}
Let $X$ be isotropic and log-concave in $\R^n$.
Then
\begin{equation}\label{eq:Iq-lower-bound}
\mathcal I_q(X)\gr c\sqrt{nq},\qquad 2\ls q\ls n.
\end{equation}
Consequently,
\begin{equation}\label{eq:Iq-ratio}
\frac{\mathcal I_2(X)}{\mathcal I_q(X)}\ls\frac{C}{\sqrt q},\qquad 2\ls q\ls n.
\end{equation}
\end{lemma}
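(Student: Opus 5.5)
The plan is to prove the lower estimate \eqref{eq:Iq-lower-bound} by a small ball argument for the gauge $h_{Z_q(X)}$, using volume estimates rather than the gauge comparison. The gauge comparison \eqref{eq:gauge-comparison} would cost a factor $\sqrt{\ln(en)}$, which the statement does not allow. The ratio estimate \eqref{eq:Iq-ratio} will then be immediate. Since $X$ is isotropic, $Z_2(X)=B_2^n$, so $\mathcal I_2(X)=\E|X|\ls(\E|X|^2)^{1/2}=\sqrt n$. Dividing by $\mathcal I_q(X)\gr c\sqrt{nq}$ gives $\mathcal I_2(X)/\mathcal I_q(X)\ls C/\sqrt q$.

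For \eqref{eq:Iq-lower-bound}, I first note that $h_{Z_q(X)}(x)\ls s$ exactly when $x\in sZ_q(X)^\circ$. Let $f$ be the density of $X$. For every $s>0$,
$$ \mathbb P\left(h_{Z_q(X)}(X)\ls s\right)=\mathbb P\left(X\in sZ_q(X)^\circ\right)\ls\|f\|_\infty\,s^n\,|Z_q(X)^\circ|. $$
Three ingredients control the right hand side.
\begin{enumerate}
\item[(a)] For isotropic log-concave $X$, $\|f\|_\infty^{1/n}=L_X$ up to an absolute factor, and $L_X$ is bounded by an absolute constant by the current resolution of the slicing problem (Klartag--Lehec, Bizeul). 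Hence $\|f\|_\infty^{1/n}\ls C$.
\item[(b)] The Blaschke--Santal\'o inequality for the origin symmetric body $Z_q(X)$ gives $|Z_q(X)^\circ|^{1/n}\ls|B_2^n|^{2/n}/|Z_q(X)|^{1/n}\ls (C/n)\,|Z_q(X)|^{-1/n}$.
\item[(c)] The volume lower bound of Paouris and Klartag--E.~Milman gives $|Z_q(X)|^{1/n}\gr c\sqrt{q/n}$ for $2\ls q\ls n$; in its general form this carries a factor $1/L_X$, which is harmless by (a).
\end{enumerate}
Combining (a)--(c), $|Z_q(X)^\circ|^{1/n}\ls C/\sqrt{nq}$, and therefore
$$ \mathbb P\left(h_{Z_q(X)}(X)\ls s\right)\ls\left(\frac{C s}{\sqrt{nq}}\right)^n. $$

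Now choose $s=c_0\sqrt{nq}$ with $c_0$ small enough that this probability is at most $1/2$. Since $h_{Z_q(X)}$ is nonnegative,
$$ \mathcal I_q(X)=\E h_{Z_q(X)}(X)\gr s\,\mathbb P\left(h_{Z_q(X)}(X)>s\right)\gr \tfrac12 c_0\sqrt{nq}, $$
which is \eqref{eq:Iq-lower-bound}.

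I expect step (c) to be the main obstacle, because the proof depends on quoting it with an absolute constant in the full range $2\ls q\ls n$. If only the form with $L_X$ is available, the slicing bound from (a) removes it. A cheaper alternative is worth recording. Minkowski's integral inequality gives
$$ \E_X\|\langle X,X'\rangle\|_{L_q(X')}\gr\left\|\E_X|\langle X,X'\rangle|\right\|_{L_q(X')}\gr c\,\|\,|X'|\,\|_q\gr c\sqrt n, $$
but this yields only order $\sqrt n$ and misses the factor $\sqrt q$. That is why I would use the volumetric route.
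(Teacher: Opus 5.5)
Your proof is correct and is essentially the paper's argument. The paper applies Markov's inequality to get $\mathbb P\{X\in 2\mathcal I_q(X)Z_q(X)^\circ\}\gr\frac12$, and hence a lower bound on $|Z_q(X)^\circ|^{1/n}$; this is just the contrapositive of your small ball bound. It then uses the same three inputs (the density bound $\|f\|_\infty^{1/n}=L_X\ls C$ from slicing, Blaschke--Santal\'o, and $|Z_q(X)|^{1/n}\gr\frac{c}{L_X}\sqrt{q/n}$), and obtains the ratio from $\mathcal I_2(X)=\E|X|\ls\sqrt n$ exactly as you do.
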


\begin{proof}
Let $f$ be the density of $X$, put $L_X=\|f\|_\infty^{1/n}$ and set $K=Z_q(X)$.
By Markov's inequality and the definition of $\mathcal I_q(X)$, 
$$\mathbb P\{X\in 2\mathcal I_q(X)K^\circ\}\gr\frac12.$$
It follows that $|K^\circ|^{1/n}\gr\frac{1}{4\mathcal I_q(X)L_X}$.
The Blaschke--Santal\'o inequality and $|B_2^n|^{2/n}\ls C/n$ give $|K^\circ|^{1/n}\ls\frac{C}{n|K|^{1/n}}$, and hence
$$ \mathcal I_q(X)\gr\frac{cn}{L_X}|Z_q(X)|^{1/n}. $$
The standard lower volume estimate for centroid bodies,
$$ |Z_q(X)|^{1/n}\gr\frac{c}{L_X}\sqrt{\frac qn}, \qquad 1\ls q\ls n, $$
may be found, for example, in~\cite{BGVV-book}.
The affirmative solution of the slicing problem gives $L_X\ls C$ uniformly~\cite{Klartag-Lehec-2025}.
Substitution in the preceding estimate proves~\eqref{eq:Iq-lower-bound}.
Finally, $Z_2(X)=B_2^n$ and therefore $\mathcal I_2(X)=\E|X|\ls\sqrt n$; this proves~\eqref{eq:Iq-ratio}.
\end{proof}

\begin{proof}[Proof of Theorem~$\ref{th:polar-centroid-entropy}$]
Suppose first that $X$ is origin symmetric.
Put $D=Z_2(X)^\circ$.
The one-dimensional moment estimate~\eqref{eq:one-dimensional-moment-growth} is equivalent to $Z_r(X)\subseteq CrZ_2(X)$, and hence $Z_r(X)^\circ\supseteq\frac{c}{r}D$.
Apply Lemma~\ref{lem:ellipsoidal-regular-entropy} with $t=\frac{cu\mathcal I_r(X)}{r\mathcal I_2(X)}$.
After adjusting the absolute constant in the packing scale, the preceding inclusion and monotonicity of packing numbers give
$$ \ln\mathsf M\left(Z_p(X),Cu\mathcal I_r(X)Z_r(X)^\circ\right) \ls \ln\mathsf M\left(Z_p(X),ct\mathcal I_2(X)D\right) \ls Cp\left[ \left(\frac{r\mathcal I_2(X)}{u\mathcal I_r(X)}\right)^2+ \frac{r\mathcal I_2(X)}{u\mathcal I_r(X)} \right].$$

We pass to a centered, not necessarily symmetric, vector.
Let $X'$ be an independent copy of $X$ and put $Y=X-X'$.
For every $q\gr1$,
\begin{equation}\label{eq:symmetrization-centroid-relations}
Z_q(X)\subseteq Z_q(Y)\subseteq2Z_q(X).
\end{equation}
Moreover, conditional Jensen's inequality and the triangle inequality imply
$$ \mathcal I_q(X)\ls \mathcal I_q(Y)\ls4\mathcal I_q(X), $$
while $Z_2(Y)=\sqrt2 Z_2(X)$ gives the sharper upper estimate
$$ \mathcal I_2(Y)\ls2\sqrt2 \mathcal I_2(X). $$
Indeed,
$$ \mathcal I_q(Y)\gr\E h_{Z_q(X)}(X-X')\gr\E h_{Z_q(X)}(X)=\mathcal I_q(X), $$
and the two upper estimates follow directly from~\eqref{eq:symmetrization-centroid-relations} and subadditivity of support functions.

Apply the symmetric estimate to $Y$.
The preceding symmetrization relations show that
$$ Cu\mathcal I_r(Y)Z_r(Y)^\circ\subseteq C'u\mathcal I_r(X)Z_r(X)^\circ $$
and
$$ \frac{\mathcal I_2(Y)}{\mathcal I_r(Y)}\ls C\frac{\mathcal I_2(X)}{\mathcal I_r(X)}. $$
Since $Z_p(X)\subseteq Z_p(Y)$, monotonicity of packing numbers proves \eqref{eq:polar-centroid-entropy} after changing the absolute constant.
Because $Z_2(X)\subseteq Z_r(X)$, one has $\mathcal I_2(X)\ls \mathcal I_r(X)$, and \eqref{eq:polar-centroid-entropy-universal} follows.
If $X$ is isotropic and $r\ls n$, Lemma~\ref{lem:Iq-lower-bound} gives $\mathcal I_2(X)/\mathcal I_r(X)\ls C/\sqrt r$ and yields \eqref{eq:polar-centroid-entropy-isotropic}.
\end{proof}

For comparison, Proposition~\ref{prop:MMP-entropy} in Appendix~\ref{appendix:MMP-comparison} records the dimension dependent bounds obtained directly from the Mendelson--Milman--Paouris program.
In the isotropic range $2\ls r\ls n$, our estimate is dimension free and gives
$$ \ln\mathsf M\left(Z_p(X),Cu\mathcal I_r(X)Z_r(X)^\circ\right)\ls Cp\left(\frac r{u^2}+\frac{\sqrt r}{u}\right). $$
The Mendelson--Milman--Paouris estimate has no explicit dependence on $r$ beyond the normalization of the target, but retains the ambient dimension.
Neither estimate dominates the other for all values of $p,r$ and $u$.
The complete formulas and the comparison of the available parameter ranges are given in Appendix~\ref{appendix:MMP-comparison}.

\paragraph{Factorization consequences.}
The remaining results of this section are factorization consequences.
They are obtained by passing through the Euclidean ball and applying the classical Sudakov and dual Sudakov inequalities.
The point is not the optimization itself, but that the resulting bounds hold for an arbitrary symmetric convex body $K$ and can then be specialized back to polar centroid bodies.

The comparison \eqref{eq:gauge-comparison} and \eqref{eq:ell-M-relations} imply
\begin{equation}\label{eq:gauge-I-to-ell}
\ell(K)\ls C\sqrt{\ln(en)}I_1(X,K),\qquad \ell^*(K)\ls C\sqrt{\ln(en)}I_1^*(X,K).
\end{equation}

\begin{proposition}\label{prop:body-dependent-factorization}
Let $X$ be isotropic and log-concave in $\R^n$, let $K$ be an origin symmetric convex body and let $p\gr2$.
Then, for every $t>0$,
\begin{align}
 \ln N(Z_p(X),tI_1(X,K)K) &\ls \frac{C\ell^*(Z_p(X))}{t}\, \frac{\ell(K)}{I_1(X,K)},\label{eq:generalized-dual-body-dependent}\\
 \ln N(K,tI_1^*(X,K)Z_p(X)^\circ) &\ls \frac{C\ell^*(Z_p(X))}{t}\, \frac{\ell^*(K)}{I_1^*(X,K)}.\label{eq:generalized-primal-body-dependent}
\end{align}
Consequently,
\begin{equation}\label{eq:generalized-universal-Mstar}
\max\left\{ \ln N(Z_p(X),tI_1(X,K)K), \ln N(K,tI_1^*(X,K)Z_p(X)^\circ) \right\} \ls \frac{C\sqrt{n\ln(en)}\,M^*(Z_p(X))}{t}.
\end{equation}
\end{proposition}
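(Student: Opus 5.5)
The plan is to factor through a Euclidean ball, as in the proof of Proposition~\ref{prop:ellstar-SMP}. For the dual estimate~\eqref{eq:generalized-dual-body-dependent}, fix $s>0$ and write $\lambda=tI_1(X,K)$. Submultiplicativity of covering numbers gives
$$ \ln N(Z_p(X),\lambda K)\ls\ln N(Z_p(X),sB_2^n)+\ln N(sB_2^n,\lambda K). $$
The first term is bounded by the classical Sudakov inequality in~\eqref{eq:classical-Sudakov-covering}, which gives at most $C\ell^*(Z_p(X))^2/s^2$. For the second term we rescale, $N(sB_2^n,\lambda K)=N(B_2^n,(\lambda/s)K)$, and apply the dual Sudakov inequality to get at most $C\ell(K)^2s^2/\lambda^2$. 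Optimizing with $s^2=\lambda\,\ell^*(Z_p(X))/\ell(K)$ balances the two terms and yields
$$ \ln N(Z_p(X),\lambda K)\ls C\,\frac{\ell^*(Z_p(X))\,\ell(K)}{\lambda}, $$
which is exactly~\eqref{eq:generalized-dual-body-dependent}.

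The primal estimate~\eqref{eq:generalized-primal-body-dependent} follows the same pattern with $\mu=tI_1^*(X,K)$:
$$ \ln N(K,\mu Z_p(X)^\circ)\ls\ln N(K,sB_2^n)+\ln N(sB_2^n,\mu Z_p(X)^\circ). $$
Sudakov bounds the first term by $C\ell^*(K)^2/s^2$. Dual Sudakov bounds the second by $C\ell(Z_p(X)^\circ)^2s^2/\mu^2$, and $\ell(Z_p(X)^\circ)=\ell^*(Z_p(X))$ because $\|\cdot\|_{Z_p(X)^\circ}=h_{Z_p(X)}$. The same optimization in $s$ gives the product $C\ell^*(K)\ell^*(Z_p(X))/\mu$. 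One point needs care: $Z_p(X)^\circ$ must be a body, so that dual Sudakov applies. This holds because $Z_p(X)\supseteq Z_2(X)=B_2^n$ by isotropy and monotonicity of moments, and $Z_p(X)$ is bounded.

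For the universal bound~\eqref{eq:generalized-universal-Mstar}, we insert~\eqref{eq:gauge-I-to-ell}, which gives $\ell(K)/I_1(X,K)\ls C\sqrt{\ln(en)}$ and $\ell^*(K)/I_1^*(X,K)\ls C\sqrt{\ln(en)}$, both coming from the reverse comparison~\eqref{eq:gauge-comparison} applied to the gauges $\|\cdot\|_K$ and $h_K$. We then use $\ell^*(Z_p(X))\simeq\sqrt n\,M^*(Z_p(X))$ from~\eqref{eq:ell-M-relations}. Substituting into both body dependent bounds gives $C\sqrt{n\ln(en)}\,M^*(Z_p(X))/t$ for each, and hence for their maximum.

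The argument is essentially routine. The only step needing attention is the correct direction of the gauge comparison in~\eqref{eq:gauge-I-to-ell}: we need the reverse inequality (Gaussian mean bounded by $\sqrt{\ln(en)}$ times the log-concave mean). This is the left inequality in~\eqref{eq:gauge-comparison}, so it causes no difficulty.
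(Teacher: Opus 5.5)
Your proposal is correct and follows the paper's proof essentially step by step: you factor through a Euclidean ball, apply Sudakov and dual Sudakov, optimize the radius, and conclude with \eqref{eq:gauge-I-to-ell} and \eqref{eq:ell-M-relations}. Taking radius $s$ in the primal case instead of the paper's $sI_1^*(X,K)$ is only a reparametrization, and your check that $Z_p(X)^\circ$ is a genuine convex body is a point the paper leaves implicit.
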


\begin{proof}
Fix $s>0$.
By submultiplicativity of covering numbers,
$$ N(Z_p(X),tI_1(X,K)K) \ls N(Z_p(X),sB_2^n)\, N(sB_2^n,tI_1(X,K)K). $$
By \eqref{eq:classical-Sudakov-covering},
$$ \ln N(Z_p(X),sB_2^n) \ls \frac{C\ell^*(Z_p(X))^2}{s^2}, $$
whereas
$$ \ln N(sB_2^n,tI_1(X,K)K) =\ln N\left(B_2^n,\frac{tI_1(X,K)}sK\right) \ls \frac{C\ell(K)^2s^2}{t^2I_1(X,K)^2}. $$
Hence
$$ \ln N(Z_p(X),tI_1(X,K)K) \ls C\left(\frac{\ell^*(Z_p(X))^2}{s^2} +\frac{\ell(K)^2s^2}{t^2I_1(X,K)^2}\right). $$
Choosing $s^2=\frac{tI_1(X,K)\ell^*(Z_p(X))}{\ell(K)}$ gives \eqref{eq:generalized-dual-body-dependent}.

For the primal estimate, again by submultiplicativity,
$$ N(K,tI_1^*(X,K)Z_p(X)^\circ) \ls N(K,sI_1^*(X,K)B_2^n)\, N(sI_1^*(X,K)B_2^n,tI_1^*(X,K)Z_p(X)^\circ). $$
Again by \eqref{eq:classical-Sudakov-covering},
$$ \ln N(K,sI_1^*(X,K)B_2^n) \ls \frac{C\ell^*(K)^2}{s^2I_1^*(X,K)^2}. $$
Moreover,
$$ \ln N(sI_1^*(X,K)B_2^n,tI_1^*(X,K)Z_p(X)^\circ)=\ln N\left(B_2^n,\frac ts Z_p(X)^\circ\right). $$
Since
$$ \ell(Z_p(X)^\circ)=\E\|G\|_{Z_p(X)^\circ}=\E h_{Z_p(X)}(G)=\ell^*(Z_p(X)), $$
\eqref{eq:classical-Sudakov-covering} yields
$$ \ln N\left(B_2^n,\frac ts Z_p(X)^\circ\right) \ls \frac{C\ell^*(Z_p(X))^2s^2}{t^2}. $$
Optimizing in $s$ gives \eqref{eq:generalized-primal-body-dependent}.
Finally, \eqref{eq:generalized-universal-Mstar} follows from \eqref{eq:gauge-I-to-ell} and \eqref{eq:ell-M-relations}.
\end{proof}

\begin{corollary}\label{cor:centroid-width-entropy}
Let $X$ be isotropic and log-concave in $\R^n$ and let $2\ls p,r\ls n$.
Then, for every $u>0$,
\begin{equation}\label{eq:centroid-width-covering}
\ln N\left(Z_p(X),u\mathcal I_r(X)Z_r(X)^\circ\right) \ls \frac{C\sqrt{np\ln(e+p)\ln(e+r)}}{u}.
\end{equation}
Consequently,
$$ \ln\mathsf M\left(Z_p(X),Cu\mathcal I_r(X)Z_r(X)^\circ\right) \ls \frac{C\sqrt{np\ln(e+p)\ln(e+r)}}{u}. $$
If $p\ls\sqrt n$, the factor $\ln(e+p)$ may be omitted; if $r\ls\sqrt n$, the factor $\ln(e+r)$ may be omitted.
In particular, both factors may be omitted when $p,r\ls\sqrt n$.
\end{corollary}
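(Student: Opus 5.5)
The plan is to specialize Proposition~\ref{prop:body-dependent-factorization} to the self generated body $K=Z_r(X)^\circ$ and then control the two Gaussian parameters with the width estimates of Section~\ref{section:preliminaries}. By \eqref{eq:generalized-dual-body-dependent} with $I_1(X,K)=\mathcal I_r(X)$ we get
$$ \ln N\left(Z_p(X),u\mathcal I_r(X)Z_r(X)^\circ\right)\ls\frac{C\ell^*(Z_p(X))}{u}\,\frac{\ell(Z_r(X)^\circ)}{\mathcal I_r(X)}. $$
Since $\ell(Z_r(X)^\circ)=\E h_{Z_r(X)}(G)=\ell^*(Z_r(X))$, the problem reduces to bounding $\ell^*(Z_p(X))$ from above and the ratio $\ell^*(Z_r(X))/\mathcal I_r(X)$ from above.

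For the first factor, I would invoke Lemma~\ref{lem:full-range-centroid-width}, which gives $\ell^*(Z_p(X))\ls C\sqrt{np\ln(e+p)}$ for $2\ls p\ls n$. When $p\ls\sqrt n$, I would use \eqref{eq:Paouris-small-p-Mstar} instead, which gives $\ell^*(Z_p(X))\ls C\sqrt{np}$. For the ratio, the numerator is bounded the same way: $\ell^*(Z_r(X))\ls C\sqrt{nr\ln(e+r)}$, or $C\sqrt{nr}$ when $r\ls\sqrt n$. The denominator is handled by Lemma~\ref{lem:Iq-lower-bound}, which gives $\mathcal I_r(X)\gr c\sqrt{nr}$ for $2\ls r\ls n$. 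Hence
$$ \frac{\ell^*(Z_r(X))}{\mathcal I_r(X)}\ls C\sqrt{\ln(e+r)}, $$
and the $\ln(e+r)$ factor disappears when $r\ls\sqrt n$.

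Multiplying the two bounds yields \eqref{eq:centroid-width-covering} together with the stated omissions of logarithmic factors. The packing statement then follows from $\mathsf M(A,2B)\ls N(A,B)$: one applies the covering bound with $u$ replaced by $u$ and the body enlarged by $C=2$, so that
$$ \mathsf M\left(Z_p(X),2u\mathcal I_r(X)Z_r(X)^\circ\right)\ls N\left(Z_p(X),u\mathcal I_r(X)Z_r(X)^\circ\right). $$

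The only point requiring care is the ratio $\ell^*(Z_r(X))/\mathcal I_r(X)$. One might be tempted to route it through the comparison \eqref{eq:gauge-I-to-ell}, but that gives $\ell(K)\ls C\sqrt{\ln(en)}\,I_1(X,K)$ and so loses a factor $\sqrt{\ln(en)}$ that is absent from the statement. To avoid this loss, I use the volumetric lower bound of Lemma~\ref{lem:Iq-lower-bound} together with the explicit width bound, rather than the general gauge comparison. The remainder is bookkeeping of constants.
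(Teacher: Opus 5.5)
Your proposal is correct and follows the paper's proof essentially verbatim. You specialize \eqref{eq:generalized-dual-body-dependent} to $K=Z_r(X)^\circ$, bound the ratio $\ell^*(Z_r(X))/\mathcal I_r(X)$ using Lemma~\ref{lem:full-range-centroid-width} together with the lower bound of Lemma~\ref{lem:Iq-lower-bound}, bound $\ell^*(Z_p(X))$ by the same width lemma, use Paouris' estimate to drop the logarithms when $p\ls\sqrt n$ or $r\ls\sqrt n$, and pass to packing via $\mathsf M(A,2B)\ls N(A,B)$, exactly as the paper does.
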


\begin{proof}
Apply Proposition~\ref{prop:body-dependent-factorization} with $K=Z_r(X)^\circ$, using~\eqref{eq:generalized-dual-body-dependent}.
Since $I_1(X,K)=\mathcal I_r(X)$ and $\ell(K)=\ell^*(Z_r(X))$,
Lemma~\ref{lem:Iq-lower-bound} and Lemma~\ref{lem:full-range-centroid-width} give
$$ \frac{\ell(K)}{I_1(X,K)} =\frac{\ell^*(Z_r(X))}{\mathcal I_r(X)} \ls C\sqrt{\ln(e+r)}. $$
A second application of Lemma~\ref{lem:full-range-centroid-width}, now with $q=p$, proves~\eqref{eq:centroid-width-covering}.
The packing estimate follows from the packing and covering inequalities.
Finally, Paouris' estimate~\eqref{eq:Paouris-small-p-Mstar} removes the corresponding logarithmic factor whenever $p\ls\sqrt n$ or $r\ls\sqrt n$.
\end{proof}

The three available estimates and the regimes in which they are effective are compared in Appendix~\ref{appendix:MMP-comparison}.

\begin{corollary}\label{cor:full-range-body-factorization}
Let $X$ be isotropic and log-concave in $\R^n$, let $K$ be origin symmetric and let $2\ls p\ls n$.
Then, for every $t>0$,
$$ \max\left\{ \ln N(Z_p(X),tI_1(X,K)K), \ln N(K,tI_1^*(X,K)Z_p(X)^\circ) \right\}  \ls \frac{C\sqrt{np\ln(en)\ln(e+p)}}{t}.$$
Consequently,
\begin{align*}
 \mathsf M\left(Z_p(X),C\sqrt{\frac{n\ln(en)\ln(e+p)}p}\,I_1(X,K)K\right)&\ls e^{Cp},\\
 \mathsf M\left(K,C\sqrt{\frac{n\ln(en)\ln(e+p)}p}\,I_1^*(X,K)Z_p(X)^\circ\right)&\ls e^{Cp}.
\end{align*}
If $p\ls\sqrt n$, the factor $\ln(e+p)$ may be omitted in all three estimates.
\end{corollary}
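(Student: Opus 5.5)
The plan is to obtain Corollary~\ref{cor:full-range-body-factorization} directly from the universal bound \eqref{eq:generalized-universal-Mstar} of Proposition~\ref{prop:body-dependent-factorization}. That estimate already controls both covering numbers by
$$ \frac{C\sqrt{n\ln(en)}\,M^*(Z_p(X))}{t}. $$
Lemma~\ref{lem:full-range-centroid-width} gives $M^*(Z_p(X))\ls C\sqrt{p\ln(e+p)}$ for $2\ls p\ls n$. Substituting yields the first displayed estimate
$$ \max\{\cdots\}\ls\frac{C\sqrt{np\ln(en)\ln(e+p)}}{t}. $$
When $p\ls\sqrt n$, Paouris' estimate~\eqref{eq:Paouris-small-p-Mstar} gives $M^*(Z_p(X))\ls C\sqrt p$, and this removes the factor $\ln(e+p)$.

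For the packing statements, the next step is to choose
$$ t=C_1\sqrt{\frac{n\ln(en)\ln(e+p)}{p}}, $$
with $C_1$ an absolute constant. Then
$$ \frac{\sqrt{np\ln(en)\ln(e+p)}}{t}=\frac{p}{C_1}, $$
so both covering numbers at scale $t$ are at most $e^{Cp}$. To pass to packing numbers, I would use $\mathsf M(A,2B)\ls N(A,B)$ with $B=tI_1(X,K)K$, and respectively $B=tI_1^*(X,K)Z_p(X)^\circ$. This gives $\mathsf M(A,2tI_1(X,K)K)\ls e^{Cp}$, and the same bound in the primal case. Absorbing the factor $2C_1$ into the constant $C$ in front of the square root then gives the two displayed packing bounds. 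The same substitution with Paouris' bound handles the case $p\ls\sqrt n$.

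The only points needing care are checking that $K$ and $Z_p(X)^\circ$ are origin symmetric convex bodies, so that the packing–covering comparison applies, and keeping the two uses of the letter $C$ separate in the scale and in the exponent. I expect no genuine obstacle: the corollary is bookkeeping on top of the proposition and the width lemma.
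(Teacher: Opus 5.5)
Your proposal is correct and is essentially the paper's proof. The paper also combines the universal bound \eqref{eq:generalized-universal-Mstar} with Lemma~\ref{lem:full-range-centroid-width}, or with Paouris' estimate \eqref{eq:Paouris-small-p-Mstar} when $p\ls\sqrt n$, and then passes to packing numbers via $\mathsf M(A,2B)\ls N(A,B)$, which is exactly what your choice of $t$ and constant bookkeeping carry out.
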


\begin{proof}
Combine Proposition~\ref{prop:body-dependent-factorization}, in the form~\eqref{eq:generalized-universal-Mstar}, with Lemma~\ref{lem:full-range-centroid-width}.
For $p\ls\sqrt n$, use~\eqref{eq:Paouris-small-p-Mstar}.
The packing estimates follow from the packing and covering inequalities.
\end{proof}

%%%%%%%%%%%%%%%%%%%%%%%%%%%%%%%%%%%%%%%%%%%%%%%%%%%%%%%%%%%%%%%%%%%%%%%%%%%%%%%%%%%%%%%%%%%%%%%%%%%%%%%%%%%%%%%%%%%%%%%%%%%%%%%%%%%%%%
\section{Affine dimensional entropy and applications to convex bodies}\label{sec:affine-applications}
%%%%%%%%%%%%%%%%%%%%%%%%%%%%%%%%%%%%%%%%%%%%%%%%%%%%%%%%%%%%%%%%%%%%%%%%%%%%%%%%%%%%%%%%%%%%%%%%%%%%%%%%%%%%%%%%%%%%%%%%%%%%%%%%%%%%%%

The preceding entropy theorem controls the size of a separated family but does not use the dimension of its affine hull.
The main point of this section is to recover such a dependence.
There are two inputs.
The projection geometry of the subgaussian body supplies a large section of the polar target inside the affine hull, while Theorem~\ref{th:polar-centroid-entropy} localizes a large part of the separated family to bounded $L_2$ diameter.
Their combination leads to Theorem~\ref{th:affine-dimension-entropy}.
The propositions before that theorem isolate these two ingredients; the final convex body calculation is only an application.

\paragraph{Projection geometry and localization.}
Following Giannopoulos, Pafis and Tziotziou~\cite{Giannopoulos-Pafis-Tziotziou-2026}, we call the body below the subgaussian body of $X$.
The normalization by $\sqrt q$ is the Gaussian normalization, since $Z_q(G)\simeq\sqrt q B_2^n$.
The next proposition follows from their volume theorem and the argument used in~\cite[Theorem~1.3]{Giannopoulos-Pafis-Tziotziou-2026}.

\begin{proposition}\label{prop:Psi-projection}
Let $X$ be isotropic and log-concave in $\R^n$, and put
$$ \Psi_{2,n}(X)=\conv\left\{\frac{Z_q(X)}{\sqrt q}:1\ls q\ls n\right\}. $$
If $E\subseteq\R^n$ has dimension $m$, then
\begin{equation}\label{eq:Psi-projection-inclusion}
P_E\Psi_{2,n}(X)
\subseteq C\sqrt{\frac nm}\,\Psi_{2,m}(P_EX)
\end{equation}
and
\begin{equation}\label{eq:Psi-projection-volume}
\operatorname{vrad}_E\left(P_E\Psi_{2,n}(X)\right) \ls C\sqrt{\frac nm}.
\end{equation}
\end{proposition}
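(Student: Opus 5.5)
The plan has two parts: the inclusion \eqref{eq:Psi-projection-inclusion} comes from comparing projected centroid bodies with centroid bodies of the marginal, and the volume bound \eqref{eq:Psi-projection-volume} then follows from the inclusion plus a volume estimate for $\Psi_{2,m}$.

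\emph{Step 1: projection commutes with centroid bodies.} For every $q\gr1$ and every $\theta\in E$,
$$ h_{P_EZ_q(X)}(\theta)=h_{Z_q(X)}(\theta)=\|\langle X,\theta\rangle\|_q=\|\langle P_EX,\theta\rangle\|_q. $$
Hence $P_EZ_q(X)=Z_q(P_EX)$, and $P_EX$ is isotropic log-concave in $E$. Since projection commutes with convex hulls,
$$ P_E\Psi_{2,n}(X)=\conv\left\{\frac{Z_q(P_EX)}{\sqrt q}:1\ls q\ls n\right\}. $$

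\emph{Step 2: the inclusion.} The terms with $q\ls m$ already lie in $\Psi_{2,m}(P_EX)$. For $m<q\ls n$, write $Y=P_EX$, which is $m$-dimensional. I would use the standard fact that for $q\gr m$ one has $Z_q(Y)\subseteq CZ_m(Y)$ up to a constant. This follows from the Paouris/Lata\l a comparison $Z_q(Y)\subseteq C\,\frac qm Z_m(Y)$, but the $\frac qm$ factor is too lossy. The sharper input is that $Z_q(Y)\simeq Z_m(Y)$ for $q\gr m$ up to an absolute constant (e.g. $Z_\infty(Y)\subseteq C Z_m(Y)$ for centered log-concave $Y$ in dimension $m$, with the support of $Y$ replaced by a level set if needed). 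Granting this,
$$ \frac{Z_q(Y)}{\sqrt q}\subseteq C\frac{Z_m(Y)}{\sqrt q}\subseteq C\frac{Z_m(Y)}{\sqrt m}\subseteq C\,\Psi_{2,m}(Y), $$
using $q>m$. This gives the inclusion, and in fact with constant $C$ rather than $C\sqrt{n/m}$. If the exact equivalence $Z_q\simeq Z_m$ is not available for unbounded $Y$, I would instead use $Z_q(Y)\subseteq C\frac qm Z_m(Y)$ together with $q\ls n$. This gives $\frac{Z_q(Y)}{\sqrt q}\subseteq C\frac{\sqrt q}{\sqrt m}\cdot\frac{Z_m(Y)}{\sqrt m}\subseteq C\sqrt{\frac nm}\,\Psi_{2,m}(Y)$, which is exactly \eqref{eq:Psi-projection-inclusion}. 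This is the robust route, so I would present it.

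\emph{Step 3: volume.} By \eqref{eq:Psi-projection-inclusion} it suffices to show $\operatorname{vrad}_E(\Psi_{2,m}(Y))\ls C$ for an isotropic log-concave $Y$ in dimension $m$. This is the volume theorem of Giannopoulos, Pafis and Tziotziou for the subgaussian body in full dimension, and I would quote it. This step is the main dependency. Its proof relies on the slicing bound and volume estimates $|Z_q(Y)|^{1/m}\simeq\sqrt{q/m}$ combined over dyadic $q$, and the convex hull requires their covering argument rather than a simple union bound. If one tries to prove it directly, the obstacle is controlling the volume of a convex hull of roughly $\log m$ bodies. I would handle it by a Sudakov/covering estimate as in their Theorem~1.3, rather than reprove it here.
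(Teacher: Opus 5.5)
Your route is exactly the paper's proof, and it is correct: the identity $P_EZ_q(X)=Z_q(P_EX)$, then Borell's comparison $Z_q(Y)\subseteq C\frac qm Z_m(Y)$ for $m<q\ls n$, then the Giannopoulos--Pafis--Tziotziou volume theorem applied to $\Psi_{2,m}(P_EX)$ in $E$. Do discard the speculative ``sharper input'' in Step~2: $Z_q(Y)\simeq Z_m(Y)$ for $q\gr m$ fails for log-concave vectors with unbounded support, and the factor $\sqrt{n/m}$ cannot be replaced by a constant. For example, take $X$ with i.i.d.\ symmetric exponential coordinates and $E=\operatorname{span}\{e_1\}$; then $P_E\Psi_{2,n}(X)\supseteq Z_n(P_EX)/\sqrt n$ has half-width of order $\sqrt n$, while $\Psi_{2,1}(P_EX)=Z_1(P_EX)$ has half-width of order $1$.
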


\begin{proof}
Write $Y=P_EX$.
The projection identity for centroid bodies gives $P_EZ_q(X)=Z_q(Y)$.
For $q\ls m$, the body $Z_q(Y)/\sqrt q$ is contained in $\Psi_{2,m}(Y)$.
For $m<q\ls n$, Borell's moment comparison gives
$$ \frac{Z_q(Y)}{\sqrt q} \subseteq C\sqrt{\frac qm}\,\frac{Z_m(Y)}{\sqrt m} \subseteq C\sqrt{\frac nm}\,\Psi_{2,m}(Y). $$
Taking convex hulls proves~\eqref{eq:Psi-projection-inclusion}.

The volume theorem of Giannopoulos, Pafis and Tziotziou~\cite[Theorem~3.2]{Giannopoulos-Pafis-Tziotziou-2026} states that the subgaussian body of a centered log-concave measure has bounded volume ratio with respect to its covariance ellipsoid.
Since $Y$ is isotropic in $E$, this gives $\operatorname{vrad}_E\left(\Psi_{2,m}(Y)\right)\ls C$,
and~\eqref{eq:Psi-projection-volume} follows.
\end{proof}

The polar consequences needed below are the following.

\begin{corollary}\label{cor:Psi-polar}
Under the assumptions of Proposition~$\ref{prop:Psi-projection}$,
\begin{equation}\label{eq:Psi-polar-section}
\operatorname{vrad}_E\left(\Psi_{2,n}(X)^\circ\cap E\right) \gr c\sqrt{\frac mn}.
\end{equation}
Moreover, for every $2\ls r\ls n$,
\begin{equation}\label{eq:Psi-target-inclusion}
\frac{\mathcal I_r(X)}{\sqrt r}\,\Psi_{2,n}(X)^\circ \subseteq \mathcal I_r(X)Z_r(X)^\circ,
\end{equation}
and
\begin{equation}\label{eq:Psi-sections}
\operatorname{vrad}_E\left( \frac{\mathcal I_r(X)}{\sqrt r}\bigl(\Psi_{2,n}(X)^\circ\cap E\bigr) \right) \gr c\sqrt m.
\end{equation}
\end{corollary}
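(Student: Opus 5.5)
The proof splits into three parts, following the three displays, and each part leans on Proposition~\ref{prop:Psi-projection} together with standard polarity facts.

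\emph{Section estimate \eqref{eq:Psi-polar-section}.} The key identity is that sections of the polar are polars of projections: inside $E$ we have $\Psi_{2,n}(X)^\circ\cap E=(P_E\Psi_{2,n}(X))^{\circ_E}$, where $\circ_E$ denotes polarity in $E$. The body $P_E\Psi_{2,n}(X)$ is origin symmetric, because every $Z_q(X)$ is origin symmetric. Its interior is nonempty in $E$, because it contains $P_E(Z_2(X)/\sqrt2)=B_2^E/\sqrt2$ by isotropy. Santaló's inequality cannot be used in this direction, so I would apply the Bourgain--Milman inequality in $E$ instead:
$$\operatorname{vrad}_E(A)\,\operatorname{vrad}_E(A^{\circ_E})\gr c.$$
Combining this with \eqref{eq:Psi-projection-volume} gives $\operatorname{vrad}_E(\Psi_{2,n}(X)^\circ\cap E)\gr c\sqrt{m/n}$.

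\emph{Target inclusion \eqref{eq:Psi-target-inclusion}.} For $2\ls r\ls n$ we have $Z_r(X)/\sqrt r\subseteq\Psi_{2,n}(X)$ by definition of $\Psi_{2,n}$. Taking polars reverses the inclusion, so $\Psi_{2,n}(X)^\circ\subseteq(Z_r(X)/\sqrt r)^\circ=\sqrt r\,Z_r(X)^\circ$. Multiplying by $\mathcal I_r(X)/\sqrt r$ gives \eqref{eq:Psi-target-inclusion}.

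\emph{Scaled section estimate \eqref{eq:Psi-sections}.} The volume radius is homogeneous of degree one, so
$$\operatorname{vrad}_E\Bigl(\tfrac{\mathcal I_r(X)}{\sqrt r}\bigl(\Psi_{2,n}(X)^\circ\cap E\bigr)\Bigr)=\tfrac{\mathcal I_r(X)}{\sqrt r}\operatorname{vrad}_E\bigl(\Psi_{2,n}(X)^\circ\cap E\bigr).$$
Lemma~\ref{lem:Iq-lower-bound} gives $\mathcal I_r(X)\gr c\sqrt{nr}$, so the prefactor is at least $c\sqrt n$. Together with \eqref{eq:Psi-polar-section} the product is at least $c\sqrt n\cdot c\sqrt{m/n}=c\sqrt m$.

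The main obstacle is the first part. It needs the reverse Santaló (Bourgain--Milman) inequality, which is not stated earlier in the paper and has to be cited as a standard fact, and it needs a careful check that the section--projection duality holds in the symmetric setting used here. The remaining two steps are routine.
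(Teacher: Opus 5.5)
Your proposal is correct and follows the paper's proof essentially step for step. You use the section--projection duality $\Psi_{2,n}(X)^\circ\cap E=(P_E\Psi_{2,n}(X))^\circ$ (polar taken in $E$) together with the reverse Santal\'o (Bourgain--Milman) inequality and \eqref{eq:Psi-projection-volume}; then polarity applied to $Z_r(X)/\sqrt r\subseteq\Psi_{2,n}(X)$; and finally homogeneity of $\operatorname{vrad}_E$ with $\mathcal I_r(X)/\sqrt r\gr c\sqrt n$ from Lemma~\ref{lem:Iq-lower-bound}. The paper also simply cites the reverse Santal\'o inequality as a standard fact, so the step you flagged as the main obstacle poses no additional difficulty.
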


\begin{proof}
We have $\left(P_E\Psi_{2,n}(X)\right)^\circ =\Psi_{2,n}(X)^\circ\cap E$, where the polar on the left is taken in $E$.
The reverse Santal\'o inequality and~\eqref{eq:Psi-projection-volume} prove~\eqref{eq:Psi-polar-section}.

Finally, $Z_r(X)/\sqrt r\subseteq\Psi_{2,n}(X)$, and polarity gives~\eqref{eq:Psi-target-inclusion}.
Combining~\eqref{eq:Psi-polar-section} with $\mathcal I_r(X)/\sqrt r\gr c\sqrt n$, which follows from Lemma~\ref{lem:Iq-lower-bound}, proves~\eqref{eq:Psi-sections}.
\end{proof}

The following proposition localizes a separated subset of $Z_p(X)$ in the metric $d_{X,2}(u,v)=\|\langle u-v,X\rangle\|_2$ determined by the covariance of $X$.

\begin{proposition}\label{prop:packing-localization}
Let $X$ be centered, nondegenerate and log-concave in $\R^n$, let $p\gr1$, $r\gr2$ and $a>0$, and let $T\subseteq Z_p(X)$ be finite.
Assume that
\begin{equation}\label{eq:packing-separation}
\|\langle u-v,X\rangle\|_r\gr a\mathcal I_r(X) \qquad(u,v\in T,\ u\ne v).
\end{equation}
Then there is $T'\subseteq T$ such that
\begin{equation}\label{eq:packing-cardinality}
|T'|\gr |T|\exp(-Cp)
\end{equation}
and
\begin{equation}\label{eq:packing-L2-diameter}
\|\langle u-v,X\rangle\|_2\ls C\mathcal I_2(X) \qquad(u,v\in T').
\end{equation}
In particular, all distinct $u,v\in T'$ satisfy
\begin{equation}\label{eq:packing-moment-ratio}
\frac{\|\langle u-v,X\rangle\|_r} {\|\langle u-v,X\rangle\|_2} \gr ca\,\frac{\mathcal I_r(X)}{\mathcal I_2(X)}.
\end{equation}
If $X$ is isotropic and $2\ls r\ls n$, the right hand side of \eqref{eq:packing-moment-ratio} is at least $ca\sqrt r$.
\end{proposition}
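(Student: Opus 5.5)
The plan is a pigeonhole argument: cover $Z_p(X)$ by few translates of a multiple of the covariance body $Z_2(X)^\circ$, and keep the fullest piece. We use Lemma~\ref{lem:ellipsoidal-regular-entropy}, or its covering counterpart, with the ellipsoid $D=Z_2(X)^\circ$ at the scale $t=1$. Note that $\|x\|_D=h_{Z_2(X)}(x)=\|\langle x,X\rangle\|_2$, so $I_1(X,D)=\E\|X\|_D=\mathcal I_2(X)$, and $D$ is an ellipsoid because $X$ is nondegenerate.

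First reduce to the symmetric case. As in the proof of Theorem~\ref{th:polar-centroid-entropy}, put $Y=X-X'$. Then $Z_p(X)\subseteq Z_p(Y)$, $Z_2(Y)^\circ=2^{-1/2}Z_2(X)^\circ$, and $\mathcal I_2(Y)\ls 2\sqrt2\,\mathcal I_2(X)$. It therefore suffices to bound packing or covering numbers of $Z_p(Y)$ by translates of $C\mathcal I_2(Y)Z_2(Y)^\circ$, and this body is contained in $C'\mathcal I_2(X)Z_2(X)^\circ$.

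Next, Lemma~\ref{lem:ellipsoidal-regular-entropy} with $t=1$ gives
$$\ln\mathsf M\bigl(Z_p(X),C\mathcal I_2(X)Z_2(X)^\circ\bigr)\ls Cp.$$
Since $N(A,B)\ls\mathsf M(A,B)$, there are points $x_1,\dots,x_m$ with $m\ls e^{Cp}$ such that
$$T\subseteq Z_p(X)\subseteq\bigcup_{i\ls m}\bigl(x_i+C\mathcal I_2(X)Z_2(X)^\circ\bigr).$$
By pigeonhole, some translate contains a subset $T'$ with $|T'|\gr|T|/m\gr|T|e^{-Cp}$, which is \eqref{eq:packing-cardinality}. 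For $u,v\in T'$ we have $u-v\in 2C\mathcal I_2(X)Z_2(X)^\circ$ by symmetry and convexity. This means $\|\langle u-v,X\rangle\|_2=\|u-v\|_{Z_2(X)^\circ}\ls 2C\mathcal I_2(X)$, which is \eqref{eq:packing-L2-diameter}. The separation hypothesis \eqref{eq:packing-separation} was not used so far.

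Dividing \eqref{eq:packing-separation} by \eqref{eq:packing-L2-diameter} gives \eqref{eq:packing-moment-ratio} for distinct $u,v\in T'$. In the isotropic case with $2\ls r\ls n$, \eqref{eq:Iq-ratio} gives $\mathcal I_r(X)/\mathcal I_2(X)\gr c\sqrt r$.

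The main point to check is the first step: Lemma~\ref{lem:ellipsoidal-regular-entropy} is stated only for symmetric $X$, and the constants must be tracked through the symmetrization and the passage from packing to covering. Both are routine given the relations \eqref{eq:symmetrization-centroid-relations} and $\mathcal I_2(Y)\ls2\sqrt2\,\mathcal I_2(X)$ already established in the proof of Theorem~\ref{th:polar-centroid-entropy}.
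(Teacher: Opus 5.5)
Your proof is correct and is essentially the paper's argument. The paper applies Theorem~\ref{th:polar-centroid-entropy} with $r=2$ and $u=1$, which in that proof is exactly Lemma~\ref{lem:ellipsoidal-regular-entropy} with $D=Z_2(X)^\circ$ after the same symmetrization $Y=X-X'$ that you carry out by hand. This gives $N\bigl(Z_p(X),C\mathcal I_2(X)Z_2(X)^\circ\bigr)\ls e^{Cp}$, and the paper then keeps the fullest cell of the cover just as you do.
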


\begin{proof}
Theorem~\ref{th:polar-centroid-entropy}, applied with $r=2$ and $u=1$, and the packing and covering inequalities give
$$ N\left(Z_p(X),C\mathcal I_2(X)Z_2(X)^\circ\right)\ls e^{Cp}. $$
Partition $T$ according to such a cover and choose a cell of maximal cardinality.
Its intersection $T'$ with $T$ satisfies~\eqref{eq:packing-cardinality}, and two points in the same cell satisfy~\eqref{eq:packing-L2-diameter}, after adjusting $C$.
Combining this with~\eqref{eq:packing-separation} proves~\eqref{eq:packing-moment-ratio}.
The isotropic assertion follows from Lemma~\ref{lem:Iq-lower-bound}.
\end{proof}

\paragraph{The affine dimensional bound.}
We now combine the projection input with the localization proposition.
The first term in the resulting estimate comes from elementary Euclidean packing, while the second uses the large section supplied by the subgaussian body.
This distinction is useful when comparing the two parameter regimes.

\begin{theorem}\label{th:affine-dimension-entropy}
There is an absolute constant $C\gr1$ with the following property.
Let $X$ be isotropic and log-concave in $\R^n$, let $p\gr1$, $2\ls r\ls n$, $a>0$, and let $T\subseteq Z_p(X)$ be nonempty and finite.
Assume that
\begin{equation}\label{eq:affine-dimension-separation}
 \|\langle u-v,X\rangle\|_r\gr a\mathcal I_r(X)  \qquad(u,v\in T,\ u\ne v).
\end{equation}
Put $d=\dim\operatorname{aff}T$.
If $d=0$, then $|T|=1$.
If $d\gr1$, then
\begin{equation}\label{eq:affine-dimension-entropy}
 \ln|T| \ls Cp+Cd\ln\left[ e+C\min\left\{ \frac{r\mathcal I_2(X)}{a\mathcal I_r(X)}, \frac{\mathcal I_2(X)}{a\sqrt d} +\frac{\mathcal I_r(X)}{\sqrt{rd}} \right\} \right].
\end{equation}
Moreover,
\begin{equation}\label{eq:affine-dimension-entropy-isotropic}
 \ln|T| \ls Cp+Cd\ln\left[ e+C\min\left\{ \frac{\sqrt r}{a}, \left(\frac1a+\min\left\{\sqrt r,\sqrt{\ln(en)\ln(e+r)}\right\}\right)\sqrt{\frac nd} \right\} \right]
\end{equation}
for every $d\gr1$.
If $r\ls\sqrt n$, the term $\sqrt{\ln(en)\ln(e+r)}$ in~\eqref{eq:affine-dimension-entropy-isotropic} may be replaced by $\sqrt{\ln(en)}$.
\end{theorem}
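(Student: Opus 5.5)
The argument first localizes $T$ and then packs inside the $d$-dimensional affine hull, using two different comparison bodies. By Proposition~\ref{prop:packing-localization} there is $T'\subseteq T$ with $|T'|\gr|T|e^{-Cp}$ and $\|\langle u-v,X\rangle\|_2\ls C\mathcal I_2(X)$ on $T'$. In isotropic position this says $|u-v|\ls C\mathcal I_2(X)\ls C\sqrt n$. It therefore suffices to bound $\ln|T'|$ by the $d$-term in \eqref{eq:affine-dimension-entropy}. Fix $u_0\in T'$ and let $E$ be the $d$-dimensional linear subspace parallel to $\operatorname{aff}T$. Then $T'-u_0\subseteq E\cap C\mathcal I_2(X)B_2^n$, and this set is separated with respect to the symmetric convex body $a\mathcal I_r(X)Z_r(X)^\circ\cap E$. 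The standard volumetric bound gives
$$|T'|\ls \frac{|C\mathcal I_2(X)B_2^E+\tfrac a2\mathcal I_r(X)(Z_r(X)^\circ\cap E)|}{|\tfrac a2\mathcal I_r(X)(Z_r(X)^\circ\cap E)|}.$$
I will estimate this ratio in two ways, one for each entry of the minimum.

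\emph{First entry.} By \eqref{eq:one-dimensional-moment-growth}, $Z_r(X)^\circ\supseteq \frac cr B_2^n$, so the separation body contains $\frac{ca\mathcal I_r(X)}{r}B_2^E$. The ratio is then at most $\bigl(1+C r\mathcal I_2(X)/(a\mathcal I_r(X))\bigr)^d$, which is the first term inside the logarithm.

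\emph{Second entry.} Use the smaller body $W=\frac{\mathcal I_r(X)}{\sqrt r}(\Psi_{2,n}(X)^\circ\cap E)$. By \eqref{eq:Psi-target-inclusion}, $aW$ sits inside the separation body intersected with $E$, so the packing is also $aW$-separated. The ratio becomes $|C\mathcal I_2 B_2^E+\frac a2W|/|\frac a2W|$. To control the numerator, observe that $W$ is contained in a Euclidean ball: $\Psi_{2,n}(X)\supseteq Z_2(X)/\sqrt2=B_2^n/\sqrt2$, so $\Psi_{2,n}(X)^\circ\subseteq\sqrt2 B_2^n$ and $W\subseteq \sqrt2\frac{\mathcal I_r(X)}{\sqrt r}B_2^E$. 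The numerator is then at most $\bigl(C\mathcal I_2+a\mathcal I_r/\sqrt r\bigr)^d|B_2^E|$. For the denominator, \eqref{eq:Psi-polar-section} together with $\mathcal I_r\gr c\sqrt{nr}$ gives $\operatorname{vrad}_E(aW)\gr c a\frac{\mathcal I_r(X)}{\sqrt r}\sqrt{\frac dn}$.

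Dividing, $|T'|^{1/d}\ls C\bigl(\mathcal I_2+a\mathcal I_r/\sqrt r\bigr)\sqrt{n/d}\,\frac{\sqrt r}{a\mathcal I_r}$. In isotropic position $\mathcal I_2\ls\sqrt n$ and $\mathcal I_r\gr c\sqrt{nr}$, so this simplifies to $C(1/a+1)\sqrt{n/d}$, which is already of the form \eqref{eq:affine-dimension-entropy-isotropic} since $\min\{\cdot\}\gr1$. To obtain the exact general form $\frac{\mathcal I_2}{a\sqrt d}+\frac{\mathcal I_r}{\sqrt{rd}}$ I would instead normalize $\sqrt{n}$ against $\mathcal I_r/\sqrt r$ and $\mathcal I_2$ directly. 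This bookkeeping of normalizations is where I expect friction, and I would accept losing absolute constants there.

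\emph{Deriving \eqref{eq:affine-dimension-entropy-isotropic}.} I substitute $\mathcal I_2/\mathcal I_r\ls C/\sqrt r$ (Lemma~\ref{lem:Iq-lower-bound}) into the first entry and $\mathcal I_2\ls\sqrt n$ into the second. For the upper bound on $\mathcal I_r/\sqrt r$ I use two estimates and keep the better one:
\begin{itemize}
\item $\mathcal I_r\ls\sqrt r\,\mathcal I_2\cdot C$ fails in general, so instead I use $\mathcal I_r(X)=\E h_{Z_r}(X)\ls Cr\E|X|$, and alternatively $\mathcal I_r(X)\ls \E h_{Z_r}(X)\ls C\sqrt{\ln(en)}\,\ell^*(Z_r(X))\ls C\sqrt{nr\ln(en)\ln(e+r)}$ by \eqref{eq:gauge-comparison} and Lemma~\ref{lem:full-range-centroid-width}. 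The second gives the $\sqrt{\ln(en)\ln(e+r)}$ option, and Paouris' estimate \eqref{eq:Paouris-small-p-Mstar} removes $\ln(e+r)$ when $r\ls\sqrt n$.
\end{itemize}
For the $\sqrt r$ option I would use $\mathcal I_r\ls\E h_{CrB_2}(X)$, which yields $r\sqrt n$ and hence $\mathcal I_r/\sqrt{rd}\ls\sqrt r\sqrt{n/d}$.

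The main obstacle is the second volumetric estimate, specifically matching the sum body $C\mathcal I_2B_2^E+aW$ to the stated form with the correct $a$-dependence. The key points there are $\Psi^\circ\subseteq\sqrt2B_2^n$ and the section lower bound \eqref{eq:Psi-sections}.
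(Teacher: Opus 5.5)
Your proposal is correct and follows the paper's proof: you localize with Proposition~\ref{prop:packing-localization}, then count separated points by volume twice, once with the Euclidean ball $\frac{ca\mathcal I_r(X)}{r}B_2^E$ and once with the section $\frac{a\mathcal I_r(X)}{\sqrt r}(\Psi_{2,n}(X)^\circ\cap E)$, using $\Psi_{2,n}(X)^\circ\subseteq\sqrt2B_2^n$ and \eqref{eq:Psi-polar-section}. The paper works in $E=\operatorname{span}(T'-t_0)$ of dimension $m\ls d$ and then needs monotonicity of $x\mapsto x\ln(e+A/\sqrt x)$; taking the full $d$-dimensional direction space of $\operatorname{aff}T$, as you do, avoids that step.

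The bookkeeping you worry about is immediate. Since $\mathcal I_r(X)\gr c\sqrt{nr}$, your bound $C\frac{\sqrt{rn}\,\mathcal I_2(X)}{a\sqrt d\,\mathcal I_r(X)}+C\sqrt{n/d}$ is at most $C\frac{\mathcal I_2(X)}{a\sqrt d}+C\frac{\mathcal I_r(X)}{\sqrt{rd}}$. In fact your route is slightly sharper than the paper's. You keep the factor $\mathcal I_r(X)/\sqrt r$ in the volume radius instead of bounding it below by $c\sqrt n$, so it cancels against the outer radius. This yields \eqref{eq:affine-dimension-entropy-isotropic} with $1$ in place of $\min\{\sqrt r,\sqrt{\ln(en)\ln(e+r)}\}$, and the upper estimates for $\mathcal I_r(X)$ in your last paragraph are not needed.
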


\begin{proof}
If $d=0$, then $T$ is a singleton.
Assume that $d\gr1$ and apply Proposition~\ref{prop:packing-localization}.
There is $T'\subseteq T$ such that
$$ |T'|\gr|T|\exp(-Cp), \qquad \|\langle u-v,X\rangle\|_2\ls C\mathcal I_2(X) \quad(u,v\in T'). $$
If $|T'|=1$, then $|T|\ls e^{Cp}$ and the result follows.
Otherwise fix $t_0\in T'$, put $S=T'-t_0$, and let $E=\operatorname{span}S,\; m=\dim E$.
Then $1\ls m\ls d$, and isotropy gives $S\subseteq C\mathcal I_2(X)B_2^E$.

We first use only Euclidean separation.
By~\eqref{eq:one-dimensional-moment-growth} and~\eqref{eq:affine-dimension-separation}, distinct $u,v\in T'$ satisfy
$$ |u-v|=\|\langle u-v,X\rangle\|_2 \gr \frac{ca\mathcal I_r(X)}{r}. $$
Comparing the volumes of disjoint Euclidean balls centered at the points of $S$ and using this containment, we obtain
$$ |T'|\ls \left[e+C\frac{r\mathcal I_2(X)}{a\mathcal I_r(X)}\right]^m. $$
Consequently,
\begin{equation}\label{eq:affine-dimension-elementary}
 \ln|T|\ls Cp+Cd\ln\left[e+C\frac{r\mathcal I_2(X)}{a\mathcal I_r(X)}\right].
\end{equation}

We next use the projection estimate.
Set
$$ D_E=\frac{a\mathcal I_r(X)}{2\sqrt r} \bigl(\Psi_{2,n}(X)^\circ\cap E\bigr). $$
Since $Z_r(X)/\sqrt r\subseteq\Psi_{2,n}(X)$, polarity gives $D_E\subseteq\frac a2\mathcal I_r(X)Z_r(X)^\circ$.
Thus~\eqref{eq:affine-dimension-separation} implies that the points of $S$ are $D_E$-separated.
Corollary~\ref{cor:Psi-polar} gives $\operatorname{vrad}_E(D_E)\gr ca\sqrt m$.
Moreover, $Z_2(X)/\sqrt2\subseteq\Psi_{2,n}(X)$ and $Z_2(X)=B_2^n$, hence $D_E\subseteq C\frac{a\mathcal I_r(X)}{\sqrt r}B_2^E$.
The translates $x+\frac12D_E$, $x\in S$, have pairwise disjoint interiors and are contained in
$$ \left(C\mathcal I_2(X)+C\frac{a\mathcal I_r(X)}{\sqrt r}\right)B_2^E. $$
Using the preceding lower bound for $\operatorname{vrad}_E(D_E)$ and comparing volumes in $E$, we obtain
$$ |T'|\ls \left[ C\frac{\mathcal I_2(X)}{a\sqrt m} +C\frac{\mathcal I_r(X)}{\sqrt{rm}} \right]^m. $$
Therefore
$$ \ln|T|\ls Cp+Cm\ln\left[ e+C\frac{\mathcal I_2(X)}{a\sqrt m} +C\frac{\mathcal I_r(X)}{\sqrt{rm}} \right]. $$
For $A\gr0$, the function $x\mapsto x\ln(e+A/\sqrt x)$ is increasing on $(0,\infty)$, since, with $t=A/\sqrt x$,
$$ \frac{d}{dx}\left[x\ln\left(e+\frac{A}{\sqrt x}\right)\right] =\ln(e+t)-\frac{t}{2(e+t)}>0. $$
Since $m\ls d$, the last estimate and~\eqref{eq:affine-dimension-elementary} give~\eqref{eq:affine-dimension-entropy}.

Finally, Lemma~\ref{lem:Iq-lower-bound} gives $\frac{r\mathcal I_2(X)}{a\mathcal I_r(X)}\ls C\frac{\sqrt r}{a}$.
Also, $\mathcal I_2(X)\ls\sqrt n$.
Borell's moment comparison gives $\mathcal I_r(X)\ls Cr\sqrt n$, while the upper first moment comparison~\eqref{eq:gauge-comparison} and Lemma~\ref{lem:full-range-centroid-width} give
$$ \mathcal I_r(X)=\E h_{Z_r(X)}(X) \ls C\sqrt{\ln(en)}\,\ell^*(Z_r(X)) \ls C\sqrt{nr\ln(en)\ln(e+r)}. $$
Using the better of these two estimates in~\eqref{eq:affine-dimension-entropy} proves~\eqref{eq:affine-dimension-entropy-isotropic}.
If $r\ls\sqrt n$, Paouris' estimate~\eqref{eq:Paouris-small-p-Mstar} allows $\sqrt{\ln(en)\ln(e+r)}$ to be replaced by $\sqrt{\ln(en)}$.
\end{proof}

\begin{corollary}
Under the assumptions of Theorem~$\ref{th:affine-dimension-entropy}$, let $d=\dim\operatorname{aff}T$.
If $d\ln\left(e+C\sqrt r/a\right)\ls cp,$ then $|T|\ls\exp(Cp)$.
More generally, if $d\gr1$, the same conclusion holds if
$$ d\ln\left[ e+C\min\left\{ \frac{\sqrt r}{a}, \left(\frac1a+\min\left\{\sqrt r,\sqrt{\ln(en)\ln(e+r)}\right\}\right)\sqrt{\frac nd} \right\} \right]\ls cp. $$
If $r\ls\sqrt n$, the second condition remains valid with $\sqrt{\ln(en)\ln(e+r)}$ replaced by $\sqrt{\ln(en)}$.
\end{corollary}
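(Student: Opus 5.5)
This corollary is a direct reading of Theorem~\ref{th:affine-dimension-entropy}, and the plan is simply to plug the hypothesis into the estimate~\eqref{eq:affine-dimension-entropy-isotropic}. If $d=0$, then $|T|=1$ and there is nothing to prove. Assume $d\gr1$, and let $C_1$ be the absolute constant appearing in~\eqref{eq:affine-dimension-entropy-isotropic}. We take the constant $C$ inside the logarithm of the corollary equal to $C_1$ (or larger). Then monotonicity of the logarithm lets us replace the inner constant by the one appearing in the hypothesis.

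For the general condition, write $\Lambda$ for the bracket appearing there. The estimate~\eqref{eq:affine-dimension-entropy-isotropic} then reads
$$ \ln|T|\ls C_1p+C_1\, d\ln\Lambda. $$
The hypothesis $d\ln\Lambda\ls cp$ therefore gives $\ln|T|\ls(C_1+C_1c)p$, which is the claim. The first, simpler condition $d\ln(e+C\sqrt r/a)\ls cp$ is handled in exactly the same way. Since the minimum inside $\Lambda$ is at most its first entry $\sqrt r/a$, we have $d\ln\Lambda\ls d\ln(e+C\sqrt r/a)\ls cp$. This case is also contained in the elementary bound~\eqref{eq:affine-dimension-elementary} combined with Lemma~\ref{lem:Iq-lower-bound}.

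For $r\ls\sqrt n$, we use the final sentence of Theorem~\ref{th:affine-dimension-entropy}: in that range one may replace $\sqrt{\ln(en)\ln(e+r)}$ by $\sqrt{\ln(en)}$ in~\eqref{eq:affine-dimension-entropy-isotropic}. The same computation then applies verbatim.

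There is no real obstacle here. The only care needed is in the bookkeeping of constants: the inner constant $C$ must be chosen at least as large as the one in the theorem, while $c$ and the outer $C$ are arbitrary absolute constants, and with that choice the implication is immediate.
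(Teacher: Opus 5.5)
Your proposal is correct and matches the paper, which states this corollary without a separate proof as an immediate reading of \eqref{eq:affine-dimension-entropy-isotropic}. You treat $d=0$ separately, obtain the first condition from the fact that the minimum is at most $\sqrt r/a$, take the inner constant at least as large as the theorem's, and handle $r\ls\sqrt n$ via the last sentence of Theorem~\ref{th:affine-dimension-entropy}; this is exactly the bookkeeping required.
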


Let $K\subseteq\R^n$ be origin symmetric, isotropic and of volume one, let $Y$ be uniformly distributed on $K$, and put $X=Y/L_K$ and $\widetilde K=K/L_K$.
Then $X$ is isotropic, $\widetilde K$ is isotropic in the probabilistic normalization, and $c\ls L_K\ls C$, where the upper bound follows from the slicing theorem~\cite{Klartag-Lehec-2025}.
Tziotziou~\cite[Proposition~4.2]{Tziotziou-2026} proved that, with $r_n=|B_2^n|^{-1/n}\simeq\sqrt n$, one may choose $\delta_n\ls C\ln(en)$ so that
$$N(r_nB_2^n,tK)\ls\exp\left(\frac{\delta_n^2n}{t^2}\right),\qquad t>0.$$
Her proof gives more precisely $\delta_n\ls Cr_nM(K)\simeq C\sqrt n\,M(K).$
Since $\widetilde K$ is isotropic in the probabilistic normalization, Bizeul~\cite[Theorem~1.4]{Bizeul-MMstar-2026} gives $M(\widetilde K)\ls C\sqrt{\ln(en)/n}.$
Since $M(\widetilde K)=L_KM(K)$ and $L_K\gr c$, it follows that $M(K)\ls C\sqrt{\ln(en)/n}.$
Using this sharper input in Tziotziou's proof improves the parameter to $\delta_n\ls C\sqrt{\ln(en)}.$
We use this updated form of Tziotziou's covering argument below.
After normalization,
\begin{equation}\label{eq:Tziotziou-cover-normalized}
N(\sqrt nB_2^n,t\widetilde K)\ls\exp\left(\frac{C\delta_n^2n}{t^2}\right).
\end{equation}
For $2\ls p\ls\sqrt n$, $t>0$ and $s>0$, submultiplicativity, Paouris' estimate and~\eqref{eq:Tziotziou-cover-normalized} give
$$\ln N\left(Z_p(X),t\sqrt{\frac pn}\,\widetilde K\right) \ls \ln N\left(Z_p(X),\frac ts\sqrt p B_2^n\right)    +\ln N(\sqrt nB_2^n,s\widetilde K) \ls Cn\left(\frac{s^2}{t^2}+\frac{\delta_n^2}{s^2}\right).$$
Choosing $s^2=\delta_n t$ yields
\begin{equation}\label{eq:Tziotziou-factorization-result}
\ln N\left(Z_p(X),t\sqrt{\frac pn}\,\widetilde K\right) \ls \frac{C\delta_n n}{t}.
\end{equation}
In particular, taking $t=\sqrt{n/p}$ in~\eqref{eq:Tziotziou-factorization-result},
\begin{equation}\label{eq:Zp-in-K-cover}
\ln N(Z_p(X),\widetilde K)\ls C\sqrt{np\ln(en)}.
\end{equation}
Since $I_1(X,\widetilde K)=\E\|Y\|_K=\frac{n}{n+1}\simeq1$,
Corollary~\ref{cor:full-range-body-factorization} gives the same order.
Thus~\eqref{eq:Zp-in-K-cover} gives, in this special case, a separate derivation of the same order using Tziotziou's covering argument with the updated mean norm input, rather than a stronger endpoint of the general factorization.
\appendix

%%%%%%%%%%%%%%%%%%%%%%%%%%%%%%%%%%%%%%%%%%%%%%%%%%%%%%%%%%%%%%%%%%%%%%%%%%%%%%%%%%%%%%%%%%%%%%%%%%%%%%%%%%%%%%%%%%%%%%%%%%%%%%%%%%%%%%
\section{Additional consequences of the moment comparison}\label{appendix:additional-sudakov}
%%%%%%%%%%%%%%%%%%%%%%%%%%%%%%%%%%%%%%%%%%%%%%%%%%%%%%%%%%%%%%%%%%%%%%%%%%%%%%%%%%%%%%%%%%%%%%%%%%%%%%%%%%%%%%%%%%%%%%%%%%%%%%%%%%%%%%

The following results are consequences of the moment comparison and are not needed in the proof of the two main theorems.
They are included here in their full form.

\begin{corollary}
Let $n\gr2$, let $X$ be isotropic and log-concave in $\R^n$, and let $T\subseteq\R^n$ be finite with $|T|\gr2$.
Put $\operatorname{diam}(T)=\max_{s,t\in T}|t-s|$.
For every $\varepsilon>0$ and every $q\gr1$,
\begin{equation}\label{eq:moment-Sudakov}
\left\|\sup_{s,t\in T}\langle t-s,X\rangle\right\|_q \gr \frac{c}{\sqrt{\ln(en)+q}} \left(\varepsilon\sqrt{\ln N(T,\varepsilon B_2^n)}+\sqrt q\,\operatorname{diam}(T)\right),
\end{equation}
and
\begin{equation}\label{eq:quantile-Sudakov}
\mathbb P\left\{\sup_{s,t\in T}\langle t-s,X\rangle \gr \frac{c}{\sqrt{\ln(en)+q}} \left(\varepsilon\sqrt{\ln N(T,\varepsilon B_2^n)}+\sqrt q\,\operatorname{diam}(T)\right)\right\} \gr e^{-Cq}.
\end{equation}
For every $1\ls q\ls c\ln(en)/\psi(X)^2$,
\begin{equation}\label{eq:moment-Sudakov-upper}
\left\|\sup_{s,t\in T}\langle t-s,X\rangle\right\|_q \ls C\sqrt{\ln(en)} \left(\E\sup_{s,t\in T}\langle t-s,G\rangle +\sqrt q\,\operatorname{diam}(T)\right),
\end{equation}
and
\begin{equation}\label{eq:quantile-Sudakov-upper}
\mathbb P\left\{\sup_{s,t\in T}\langle t-s,X\rangle >C\sqrt{\ln(en)} \left(\E\sup_{s,t\in T}\langle t-s,G\rangle +\sqrt q\,\operatorname{diam}(T)\right)\right\} \ls e^{-q}.
\end{equation}
In particular, the two upper estimates hold uniformly for $1\ls q\ls c\sqrt{\ln(en)}$.
\end{corollary}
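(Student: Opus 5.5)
The plan is to apply Theorem~\ref{th:intro-moment-comparison} to the gauge
$$\phi(x)=\sup_{s,t\in T}\langle t-s,x\rangle=h_{\conv(T-T)}(x),$$
which is nonnegative, convex and positively homogeneous, since $0\in T-T$. Every quantity in the corollary is a moment or tail of $\phi(X)$, so it suffices to control the Gaussian side $\phi(G)$ from both directions and then transfer.

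For the lower estimate~\eqref{eq:moment-Sudakov}, Theorem~\ref{th:intro-moment-comparison}(i) gives $\|\phi(X)\|_q\gr c(\ln(en)+q)^{-1/2}\|\phi(G)\|_q$. It then remains to show
$$\|\phi(G)\|_q\gr c\left(\varepsilon\sqrt{\ln N(T,\varepsilon B_2^n)}+\sqrt q\,\operatorname{diam}(T)\right).$$
\begin{itemize}
\item The first term follows from $\|\phi(G)\|_q\gr\E\phi(G)\gr\E\sup_{t\in T}\langle t-t_0,G\rangle$ together with~\eqref{eq:Gaussian-Sudakov}.
\item For the second term, choose $s,t$ realizing the diameter. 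Then $\phi(G)\gr|\langle t-s,G\rangle|$ (use both orders of the pair), so $\|\phi(G)\|_q\gr c\sqrt q\,|t-s|$.
\end{itemize}
Adding the two bounds with a factor $1/2$ completes this step.

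For the quantile bound~\eqref{eq:quantile-Sudakov}, I will use Paley--Zygmund at the level of moments. Log-concavity of $X$ and Borell's lemma applied to the seminorm-type gauge $\phi$ (or to $\phi(x)+\phi(-x)$, which here equals $2\phi$ since $T-T$ is symmetric) give $\|\phi(X)\|_{2q}\ls C\|\phi(X)\|_q$. Paley--Zygmund then yields
$$\mathbb P\{\phi(X)\gr\tfrac12\|\phi(X)\|_q\}\gr\left(1-2^{-q}\right)^2\left(\|\phi(X)\|_q/\|\phi(X)\|_{2q}\right)^{2q}\gr e^{-Cq}.$$
Combined with~\eqref{eq:moment-Sudakov}, this gives~\eqref{eq:quantile-Sudakov}. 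The case $q<2$ I would handle by monotonicity, adjusting constants.

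For the upper estimates, Theorem~\ref{th:intro-moment-comparison}(iii) in the range $q\ls c\ln(en)/\psi(X)^2$ gives $\|\phi(X)\|_q\ls C\sqrt{\ln(en)}\|\phi(G)\|_q$. Gaussian concentration, since $\phi$ is $\operatorname{diam}(T)$-Lipschitz, gives
$$\|\phi(G)\|_q\ls\E\phi(G)+C\sqrt q\,\operatorname{diam}(T),$$
and this proves~\eqref{eq:moment-Sudakov-upper}. The tail bound~\eqref{eq:quantile-Sudakov-upper} follows from Markov's inequality: $\mathbb P\{\phi(X)>e\|\phi(X)\|_q\}\ls e^{-q}$, with the factor $e$ absorbed into $C$. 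The uniform range $q\ls c\sqrt{\ln(en)}$ comes from Letwin's bound, as in part (iii).

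The main obstacle is the reverse Hölder step $\|\phi(X)\|_{2q}\ls C\|\phi(X)\|_q$ used for the lower quantile, since $X$ is only log-concave. Here $\phi$ is a genuine seminorm (the support function of a symmetric set), so Borell's lemma applies directly.
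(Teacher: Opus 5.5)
Your proposal is correct and follows essentially the same route as the paper: Theorem~\ref{th:intro-moment-comparison}(i) applied to the seminorm $h_{\conv(T-T)}$, Gaussian Sudakov plus a diameter-realizing linear functional for $\|\phi(G)\|_q$, Borell's lemma with Paley--Zygmund for the lower quantile, and part~(iii), Gaussian concentration and Markov for the upper estimates. The only cosmetic difference is that the separate treatment of $q<2$ is unnecessary, since Borell's reverse H\"older inequality $\|\phi(X)\|_{2q}\ls C\|\phi(X)\|_q$ already holds for every $q\gr1$.
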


\begin{proof}
Let $K=\conv(T-T)$.
Then $K$ is origin symmetric, and $h_K(x)=\sup_{s,t\in T}\langle t-s,x\rangle$ is a seminorm.
Theorem~\ref{th:intro-moment-comparison}(i) gives
$$ \|h_K(X)\|_q\gr\frac{c}{\sqrt{\ln(en)+q}}\|h_K(G)\|_q. $$
Fix $t_0\in T$ and choose $s_0,t_1\in T$ with $|t_1-s_0|=\operatorname{diam}(T)$.
The Gaussian Sudakov minoration and the inclusions $t-t_0,t_1-s_0,s_0-t_1\in K$ give
\begin{align*}
 \|h_K(G)\|_q&\gr\E h_K(G) \gr c\varepsilon\sqrt{\ln N(T,\varepsilon B_2^n)},\\
 \|h_K(G)\|_q&\gr\|\langle t_1-s_0,G\rangle\|_q \gr c\sqrt q\,\operatorname{diam}(T).
\end{align*}
Since $\max\{u,v\}\gr(u+v)/2$, the preceding estimate gives \eqref{eq:moment-Sudakov}.

To obtain the lower quantile estimate, Borell's lemma~\cite{Borell-1974} gives regular growth of moments of seminorms under log-concave measures; in particular,
$$ \|h_K(X)\|_{2q}\ls C_0\|h_K(X)\|_q, \qquad q\gr1. $$
Applying the Paley--Zygmund inequality to $h_K(X)^q$ and using the preceding estimate,
$$ \mathbb P\left\{h_K(X)\gr\frac12\|h_K(X)\|_q\right\} \gr \frac14\left(\frac{\|h_K(X)\|_q}{\|h_K(X)\|_{2q}}\right)^{2q} \gr e^{-Cq}.$$
Together with~\eqref{eq:moment-Sudakov}, this proves~\eqref{eq:quantile-Sudakov}.

For the upper estimates, assume that $1\ls q\ls c\ln(en)/\psi(X)^2$.
Theorem~\ref{th:intro-moment-comparison}(iii) applied to the seminorm $h_K$ gives
$$ \|h_K(X)\|_q\ls C\sqrt{\ln(en)}\,\|h_K(G)\|_q. $$
The Gaussian concentration inequality, or equivalently the standard moment estimate for Lipschitz functions of a Gaussian vector, gives
$$ \|h_K(G)\|_q \ls \E h_K(G)+C\sqrt q\,\operatorname{diam}(T), $$
because the Euclidean Lipschitz constant of $h_K$ is $\max_{u\in K}|u|=\operatorname{diam}(T)$.
This proves~\eqref{eq:moment-Sudakov-upper}.
Finally, Markov's inequality applied to $h_K(X)^q$ at the level $e\|h_K(X)\|_q$ proves~\eqref{eq:quantile-Sudakov-upper} after changing the absolute constant.
\end{proof}

\begin{corollary}
Let $n\gr2$, let $X$ be isotropic and log-concave in $\R^n$, let $p\gr2$, and let $T\subseteq\R^n$ be finite with $|T|\gr e^p$ and
$$ \|\langle t-s,X\rangle\|_p\gr A\qquad(s,t\in T,\ s\ne t). $$
Then, for every $q\gr1$,
\begin{equation}\label{eq:quantile-Lp-Sudakov}
\mathbb P\left\{\sup_{s,t\in T}\langle t-s,X\rangle \gr \frac{cA}{\sqrt{\ln(en)+q}} \left(\frac1{\sqrt p}+\frac{\sqrt q}{p}\right)\right\} \gr e^{-Cq}.
\end{equation}
In particular,
\begin{equation}\label{eq:quantile-Lp-natural-scale}
\mathbb P\left\{\sup_{s,t\in T}\langle t-s,X\rangle \gr \frac{cA}{\sqrt{p\ln(en)}}\right\} \gr \exp\bigl(-C\min\{p,\ln(en)\}\bigr).
\end{equation}
\end{corollary}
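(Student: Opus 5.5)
The plan is to reduce to the previous corollary, i.e. the moment and quantile Sudakov estimates \eqref{eq:moment-Sudakov} and \eqref{eq:quantile-Sudakov}. We apply them to the set $T$ at a Euclidean scale determined by the $L_p$ separation.

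First, convert the $d_{X,p}$ separation into Euclidean separation. By \eqref{eq:one-dimensional-moment-growth} and isotropy, for distinct $s,t\in T$,
$$ A\ls\|\langle t-s,X\rangle\|_p\ls C_1p\,\|\langle t-s,X\rangle\|_2=C_1p\,|t-s|. $$
Hence $T$ is $A/(C_1p)$-separated in $|\cdot|$. Taking $\varepsilon=A/(2C_1p)$, each ball $x+\varepsilon B_2^n$ contains at most one point of $T$. This gives $N(T,\varepsilon B_2^n)\gr|T|\gr e^p$, and therefore
$$ \varepsilon\sqrt{\ln N(T,\varepsilon B_2^n)}\gr \frac{cA}{\sqrt p}. $$
The same separation gives $\operatorname{diam}(T)\gr A/(C_1p)$, since $|T|\gr e^p\gr2$. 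Thus $\sqrt q\,\operatorname{diam}(T)\gr cA\sqrt q/p$.

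Next, substitute both lower bounds into \eqref{eq:quantile-Sudakov}. Its threshold
$$ \frac{c}{\sqrt{\ln(en)+q}}\left(\varepsilon\sqrt{\ln N}+\sqrt q\operatorname{diam}(T)\right) $$
is then at least $\frac{cA}{\sqrt{\ln(en)+q}}\bigl(\frac1{\sqrt p}+\frac{\sqrt q}{p}\bigr)$. Since the event in \eqref{eq:quantile-Sudakov} is contained in the event in \eqref{eq:quantile-Lp-Sudakov}, this proves \eqref{eq:quantile-Lp-Sudakov} for every $q\gr1$.

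For \eqref{eq:quantile-Lp-natural-scale}, choose $q=\min\{p,\ln(en)\}$, which satisfies $q\gr1$ because $p\gr2$ and $n\gr2$. We then need to check that the threshold is at least $cA/\sqrt{p\ln(en)}$. If $p\ls\ln(en)$, then $q=p$, so $\sqrt{\ln(en)+q}\ls\sqrt{2\ln(en)}$ and the term $A/\sqrt p$ gives the bound. If $p>\ln(en)$, then $q=\ln(en)$, so $\sqrt{\ln(en)+q}=\sqrt{2\ln(en)}$ and again the term $A/\sqrt p$ suffices. The probability lower bound is $e^{-Cq}=\exp(-C\min\{p,\ln(en)\})$.

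I expect no real obstacle. The only care needed is the constant bookkeeping when converting separation into covering numbers, and noting that $|T|\gr2$ so that the previous corollary applies.
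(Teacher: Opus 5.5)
Your proposal is correct and is essentially the paper's own proof. The steps match: Euclidean separation $cA/p$ from the one-dimensional moment growth; then $N(T,\varepsilon B_2^n)\gr e^p$ and $\operatorname{diam}(T)\gr cA/p$ inserted into the quantile Sudakov estimate; then $q\simeq\min\{p,\ln(en)\}$ for the second claim. Your check that $\min\{p,\ln(en)\}\gr1$ when $n,p\gr2$ makes the paper's $\max\{1,\cdot\}$ unnecessary.

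One minor point: with $\varepsilon$ exactly half the separation, two points of $T$ can lie on the boundary of the same closed ball. Take $\varepsilon=A/(3C_1p)$ instead.
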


\begin{proof}
By~\eqref{eq:one-dimensional-moment-growth} and isotropy, $|t-s|\gr cA/p$ for distinct $s,t\in T$.
Thus, for $\varepsilon=cA/p$ with a sufficiently small absolute constant,
$$ N(T,\varepsilon B_2^n)\gr|T|\gr e^p, \qquad \operatorname{diam}(T)\gr\frac{cA}{p}. $$
\eqref{eq:quantile-Lp-Sudakov} follows from~\eqref{eq:quantile-Sudakov}.
For~\eqref{eq:quantile-Lp-natural-scale}, take $q=\max\{1,\min\{p,\ln(en)\}\}$ and absorb the bounded case into the absolute constants.
\end{proof}

\begin{proposition}\label{prop:large-family}
Let $X$ be isotropic and log-concave in $\R^n$, let $p\gr2$ and let $T\subseteq\R^n$ be finite with $|T|\gr2$.
Put $d=\dim\operatorname{span}(T-T)$.
If
$$ \|\langle t-s,X\rangle\|_p\gr A\qquad(s,t\in T,\ s\ne t), $$
then
\begin{equation}\label{eq:large-family-bound}
\E\sup_{s,t\in T}\langle t-s,X\rangle \gr \frac{cA}{p\sqrt{\ln(ed)}} \max\left\{\sqrt{\ln |T|},\sqrt d\bigl(|T|^{1/d}-1\bigr)\right\}.
\end{equation}
In particular, the left hand side is at least $cA$ provided that
$$ \ln |T|\gr C\min\left\{ p^2\ln(ed), d\ln\left(1+p\sqrt{\frac{\ln(ed)}d}\right) \right\}. $$
\end{proposition}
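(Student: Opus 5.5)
The plan is to reduce to a Euclidean packing problem inside the $d$-dimensional subspace $E=\operatorname{span}(T-T)$, and then apply the reverse first moment comparison \eqref{eq:gauge-comparison} in dimension $d$ rather than $n$.

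First, by \eqref{eq:one-dimensional-moment-growth} and isotropy, distinct $s,t\in T$ satisfy $|t-s|=\|\langle t-s,X\rangle\|_2\gr cA/p$. Fix $t_0\in T$ and put $S=T-t_0\subseteq E$. Since $X$ is centered,
$$ \E\sup_{s,t\in T}\langle t-s,X\rangle\gr\E\sup_{t\in S}\langle t,X\rangle=\E\sup_{t\in S}\langle t,P_EX\rangle. $$
The vector $P_EX$ is isotropic and log-concave in $E$, which has dimension $d$. The function $x\mapsto\sup_{t\in S}\langle t,x\rangle$ on $E$ is a gauge, because $0\in S$. Applying \eqref{eq:gauge-comparison} in $E$ therefore gives
$$ \E\sup_{t\in S}\langle t,P_EX\rangle\gr\frac{c}{\sqrt{\ln(ed)}}\,\E\sup_{t\in S}\langle t,G_E\rangle. $$
When $d=1$ we use \eqref{eq:gauge-comparison} with $\ln(e)=1$, or Borell's comparison directly. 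This leaves two lower bounds to prove for the Gaussian supremum $\E\sup_{t\in S}\langle t,G_E\rangle$.

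The first term comes from Sudakov. By \eqref{eq:Gaussian-Sudakov} with $\varepsilon=cA/p$ small enough that $N(S,\varepsilon B_2^E)\gr|T|$, we get
$$ \E\sup_{t\in S}\langle t,G_E\rangle\gr c\frac Ap\sqrt{\ln|T|}. $$

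The second term comes from volume, and I expect it to be the main step. Set $W=\conv(S\cup(-S))\subseteq E$. Since $G_E$ is symmetric,
$$ \E\sup_{s,t\in T}\langle t-s,G_E\rangle\gr\E h_{W}(G_E)=\ell^*(W)\gr c\sqrt d\,\operatorname{vrad}_E(W), $$
by Urysohn's inequality and \eqref{eq:ell-M-relations} in $E$. (We apply the comparison to the symmetric gauge $h_{\conv(T-T)}$ from the start, which also gives the right left-hand side.) The balls $x+\frac{\varepsilon}{2}B_2^E$, $x\in S$, are disjoint and contained in $W+\frac\varepsilon2B_2^E$. Brunn--Minkowski in $E$ then gives
$$ |T|^{1/d}\,\frac\varepsilon2\ls\operatorname{vrad}_E(W)+\frac\varepsilon2, $$
that is, $\operatorname{vrad}_E(W)\gr\frac\varepsilon2(|T|^{1/d}-1)$. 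Consequently $\ell^*(W)\gr c\sqrt d\,\frac Ap(|T|^{1/d}-1)$. A small technical point is that $W$ must have nonempty interior in $E$; this holds because $S$ spans $E$.

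Combining the two terms with the factor $1/\sqrt{\ln(ed)}$ yields \eqref{eq:large-family-bound}. For the final claim, take whichever term is larger. If $\ln|T|\gr Cp^2\ln(ed)$, the first term is at least $cA$. If $\ln|T|\gr Cd\ln(1+p\sqrt{\ln(ed)/d})$, then $|T|^{1/d}-1\gr p\sqrt{\ln(ed)/d}$ after adjusting constants, and the second term is at least $cA$.
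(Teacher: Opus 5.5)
Your reduction to $E=\operatorname{span}(T-T)$, the comparison applied in dimension $d$, the Sudakov term and the derivation of the ``in particular'' clause are fine and follow the paper. There is, however, a genuine gap in the volumetric step, which is the one that produces the second term.

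Brunn--Minkowski gives $\operatorname{vrad}_E(W+\frac\varepsilon2B_2^E)\gr\operatorname{vrad}_E(W)+\frac\varepsilon2$, which is the opposite of what you need. So the packing bound $\operatorname{vrad}_E(W+\frac\varepsilon2B_2^E)\gr\frac\varepsilon2|T|^{1/d}$ yields no lower bound on $\operatorname{vrad}_E(W)$. In fact your intermediate claim $\operatorname{vrad}_E(W)\gr\frac\varepsilon2(|T|^{1/d}-1)$ is false. Take $E=\operatorname{span}\{e_1,\dots,e_d\}$ and $S=\{0,\varepsilon e_1,2\varepsilon e_1,\dots,(N-1)\varepsilon e_1,\varepsilon e_2,\dots,\varepsilon e_d\}$, which is $\varepsilon$-separated and spans $E$. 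Then $W$ is the cross-polytope with semi-axes $(N-1)\varepsilon,\varepsilon,\dots,\varepsilon$, so $\operatorname{vrad}_E(W)=(N-1)^{1/d}\varepsilon\operatorname{vrad}(B_1^d)\simeq\varepsilon N^{1/d}/\sqrt d$. With $N=4^d$ this is of order $\varepsilon/\sqrt d$, while $\frac\varepsilon2(|T|^{1/d}-1)\gr\frac{3\varepsilon}{2}$. Elongated configurations lose exactly the factor $\sqrt d$ that the second term is supposed to gain, so no argument through $\operatorname{vrad}_E(W)$ alone can work.

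The repair is what the paper does: apply Urysohn's inequality to the Minkowski sum itself and use that support functions are additive. Let $\sigma_E$ be the rotation invariant probability on the unit sphere of $E$. For every convex body $L\subseteq E$, one has $\E h_L(G_E)=\E|G_E|\int h_L\,d\sigma_E\gr\operatorname{vrad}_E(L)\,\E|G_E|$. Hence
\[ \E h_W(G_E)+\frac\varepsilon2\,\E|G_E| =\E h_{W+\frac\varepsilon2B_2^E}(G_E) \gr\operatorname{vrad}_E\Bigl(W+\frac\varepsilon2B_2^E\Bigr)\E|G_E| \gr\frac\varepsilon2\,|T|^{1/d}\,\E|G_E|, \]
and $\E|G_E|\simeq\sqrt d$ gives $\E h_W(G_E)\gr c\varepsilon\sqrt d\,(|T|^{1/d}-1)$. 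The paper runs this with $K=\conv(T)$ after translating so that $0\in T$; your symmetric $W$ works equally well. The nonempty-interior issue also disappears, because $W+\frac\varepsilon2B_2^E$ always has interior in $E$. With this one change the rest of your proof goes through and matches the paper's.
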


\begin{proof}
Let $E=\operatorname{span}(T-T)$ and let $Y=P_EX$.
Then $Y$ is isotropic and log-concave in the $d$-dimensional space $E$.
Fix $t_0\in T$ and replace $T$ by $T-t_0$, so that $0\in T\subseteq E$.
Put $K=\conv(T)$.
By \eqref{eq:one-dimensional-moment-growth}, $|t-s|\gr\frac{cA}{p}\qquad(s\ne t)$.
Since $0\in K$, the function $h_K$ is a gauge and $\sup_{s,t\in T}\langle t-s,X\rangle=\sup_{s,t\in T}\langle t-s,Y\rangle\gr h_K(Y)$ pointwise.
The comparison of Bizeul and Klartag in $E$, followed by the Gaussian Sudakov minoration, gives
$$ \E\sup_{s,t\in T}\langle t-s,X\rangle \gr \frac{cA}{p\sqrt{\ln(ed)}}\sqrt{\ln |T|}. $$

For the second estimate, put $r=cA/p$.
The sets $ t+(r/2)B_2^E$, for $t\in T, $ have disjoint interiors and are contained in $K+(r/2)B_2^E$.
Hence $\operatorname{vrad}_E\left(K+\frac r2B_2^E\right)\gr\frac r2|T|^{1/d}$.
By Urysohn's inequality, if $G_E$ is standard Gaussian in $E$, then
$$ \E h_K(G_E)+\frac r2\E|G_E| =\E h_{K+(r/2)B_2^E}(G_E) \gr\frac r2|T|^{1/d}\E|G_E|. $$
Since $\E|G_E|\simeq\sqrt d$, it follows that
$$ \E h_K(G_E)\gr cr\sqrt d\bigl(|T|^{1/d}-1\bigr). $$
Applying the comparison of Bizeul and Klartag in $E$ once more gives
$$ \E\sup_{s,t\in T}\langle t-s,X\rangle \gr \frac{cA}{p\sqrt{\ln(ed)}}\sqrt d\bigl(|T|^{1/d}-1\bigr). $$
Together with the first estimate and $\max\{u,v\}\gr(u+v)/2$, this proves~\eqref{eq:large-family-bound}.
The last assertion follows by solving each of the two sufficient inequalities for $\ln|T|$.
\end{proof}

Lata\l a's estimate~\cite[Lemma~2.8]{Latala-Sudakov-2014} already gives the following lower bound involving the dimension:
$$ \E\sup_{s,t\in T}\langle t-s,X\rangle\gr \frac{cA}{p}\bigl(|T|^{1/d}-1\bigr). $$
The second term in~\eqref{eq:large-family-bound} is obtained from the same volumetric mechanism, followed by Urysohn's inequality and the reverse gauge comparison, and gains the factor $\sqrt{d/\ln(ed)}$.
Lata\l a's theorem~\cite[Theorem~28]{Latala-Problems-2017} gives an absolute lower bound under the assumption $|T|\gr\exp(e^p)$, which does not involve the dimension.
Proposition~\ref{prop:large-family} gives a complementary condition that does involve it.

%%%%%%%%%%%%%%%%%%%%%%%%%%%%%%%%%%%%%%%%%%%%%%%%%%%%%%%%%%%%%%%%%%%%%%%%%%%%%%%%%%%%%%%%%%%%%%%%%%%%%%%%%%%%%%%%%%%%%%%%%%%%%%%%%%%%%%
\section{Comparison with the Mendelson--Milman--Paouris estimates}\label{appendix:MMP-comparison}
%%%%%%%%%%%%%%%%%%%%%%%%%%%%%%%%%%%%%%%%%%%%%%%%%%%%%%%%%%%%%%%%%%%%%%%%%%%%%%%%%%%%%%%%%%%%%%%%%%%%%%%%%%%%%%%%%%%%%%%%%%%%%%%%%%%%%%

For comparison, the weak part of the Mendelson--Milman--Paouris program becomes unconditional after the resolution of the slicing problem.
Applied to the same self generated target, it gives the following dimension dependent estimate.

\begin{proposition}\label{prop:MMP-entropy}
Let $X$ be centered, nondegenerate and log-concave in $\R^n$, let $r\gr1$ and let $u\gr1$.
If $1\ls p\ls n$, then
\begin{equation}\label{eq:MMP-entropy}
\ln\mathsf M\left(Z_p(X),Cu\mathcal I_r(X)Z_r(X)^\circ\right)
\ls C\left[1+n\left(\frac{\ln(e+u\sqrt{n/p})}{u\sqrt{n/p}}\right)^{1/3}\right].
\end{equation}
If $p\gr n$, then
\begin{equation}\label{eq:large-p-natural-entropy}
\ln\mathsf M\left(Z_p(X),Cu\mathcal I_r(X)Z_r(X)^\circ\right) \ls C\left(1+\frac pu\right).
\end{equation}
\end{proposition}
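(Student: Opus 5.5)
We reduce the statement to the weak Mendelson--Milman--Paouris entropy estimate for general bodies, combined with the lower bound on the mean of the gauge of the target. First we reduce to the symmetric case, exactly as in the proof of Theorem~\ref{th:polar-centroid-entropy}. Put $Y=X-X'$. Then \eqref{eq:symmetrization-centroid-relations}, the comparison $\mathcal I_r(X)\ls\mathcal I_r(Y)\ls4\mathcal I_r(X)$ and the monotonicity of packing numbers give
$$Cu\mathcal I_r(Y)Z_r(Y)^\circ\subseteq C'u\mathcal I_r(X)Z_r(X)^\circ,\qquad Z_p(X)\subseteq Z_p(Y).$$
So it suffices to bound $\mathsf M(Z_p(Y),Cu\,I_1(Y,K)K)$ with $K=Z_r(Y)^\circ$ and $I_1(Y,K)=\mathcal I_r(Y)$. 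The problem therefore becomes a generalized dual Sudakov estimate at scale $u\,I_1(\mu,K)$ for the symmetric log-concave law $\mu$ of $Y$ and an arbitrary symmetric body $K$. From this point on the self generated structure plays no role.

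For $1\ls p\ls n$ we invoke the weak, dimension dependent part of the Mendelson--Milman--Paouris program. Their theorem bounds $\ln\mathsf M(Z_p(\mu),t\,m_1(\mu,K)K)$ by $C[1+n(\ln(e+t\sqrt{n/p})/(t\sqrt{n/p}))^{1/3}]$, with constants depending on the isotropic constant $L_\mu$. Since the slicing problem is resolved \cite{Klartag-Lehec-2025}, $L_\mu\ls C$ and this dependence disappears. We then replace $m_1$ by $I_1$ using \cite[Lemma~2.1]{Mendelson-Milman-Paouris}, as in the proof of Lemma~\ref{lem:ellipsoidal-regular-entropy}, and set $t=u$. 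This gives \eqref{eq:MMP-entropy}. The main obstacle is checking that the exponent $1/3$, the logarithm and the normalization $\sqrt{n/p}$ match their statement after translating their packing convention (disjoint translates) and their affine normalization into ours. Affine invariance lets us assume $Y$ isotropic, since both sides transform compatibly under $T\in GL_n$: $Z_p(TY)=TZ_p(Y)$ and $I_1(TY,TK)=I_1(Y,K)$.

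For $p\gr n$ we argue directly by volume rather than through the weak estimate. Let $S\subseteq Z_p(Y)$ be $Cu\mathcal I_r(Y)K$-separated with $K=Z_r(Y)^\circ$. The translates $s+\tfrac{C}{2}u\mathcal I_rK$ are disjoint and lie in $Z_p(Y)+\tfrac{C}{2}u\mathcal I_rK$. The $p=n$ case already gives $e^{Cn(1+1/u)}$, which suffices when $p\simeq n$. In general, we use that $Z_p(Y)\subseteq\frac{Cp}{n}Z_n(Y)$ by Borell's comparison. Covering $Z_p(Y)$ by $\left(1+\frac{Cp}{n}\right)^{n}$-type translates of $Z_n(Y)$, one could bound $\ln\mathsf M(Z_p,\cdot)\ls n\ln(Cp/n)+\ln\mathsf M(Z_n,\cdot)$. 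This is not of the form $C(1+p/u)$, so instead we would use the volumetric bound
$$|S|\ls \frac{|Z_p(Y)+\frac C2u\mathcal I_rK|}{|\frac C2u\mathcal I_rK|}\ls\left(1+\frac{2\,\mathrm{vrad}(Z_p)}{Cu\,\mathcal I_r\,\mathrm{vrad}(K)}\right)^{n}$$
in the appropriate sense, via Rogers--Shephard or mixed volumes. We would combine it with $\mathcal I_r|K|^{1/n}\gr c$ from the Markov argument of Lemma~\ref{lem:Iq-lower-bound} and with $|Z_p(Y)|^{1/n}\ls Cp/n$. This yields $\ln|S|\ls n\ln(1+Cp/(nu))\ls Cp/u$. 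This last step, controlling the Minkowski sum volume without losing more than a constant, is the delicate point. If it fails, we would fall back on the MMP large-$p$ estimate, which we expect to be stated in exactly the form \eqref{eq:large-p-natural-entropy}.
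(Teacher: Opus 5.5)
Your symmetrization and your treatment of $1\ls p\ls n$ match the paper's argument. The paper applies \cite[Theorem~9.1]{Mendelson-Milman-Paouris} with $q=1$ and $L=Z_r(X)^\circ$. The hypothesis there is $1$-purity, i.e.\ bounded isotropic constants of all marginals, which the slicing theorem provides. It then replaces $m_1(\mu,L)$ by $I_1(X,L)=\mathcal I_r(X)$ and takes $t=u\sqrt{n/p}$ in their parametrization, which is your choice $t=u$.

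The gap is in the case $p\gr n$, where your main argument does not work.

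First, the claim that the case $p=n$ ``already gives $e^{Cn(1+1/u)}$, which suffices when $p\simeq n$'' is false. At $p\simeq n$ the target is $C(1+n/u)$, which is far below $Cn$ once $u$ is large. Moreover, \eqref{eq:MMP-entropy} at $p=n$ only gives $C\bigl[1+n(\ln(e+u)/u)^{1/3}\bigr]$, still much larger than $n/u$.

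Second, your volumetric bound $|Z_p+B|/|B|\ls\bigl(1+C\operatorname{vrad}(Z_p)/\operatorname{vrad}(B)\bigr)^n$ amounts to a reverse Brunn--Minkowski inequality $|A+B|^{1/n}\ls|B|^{1/n}+C|A|^{1/n}$ with no multiplicative loss on $|B|^{1/n}$. This is false in general. Take the cube $B=[-1,1]^n$, whose surface area is $n2^n$, and $A=\delta B_2^n$ with $n\delta\ls1$. The Steiner formula gives $|A+B|\gr 2^n(1+n\delta)$, so $|A+B|^{1/n}-|B|^{1/n}\gr\delta$, while $|A|^{1/n}\simeq\delta/\sqrt n$. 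Rogers--Shephard holds only up to a factor $C^n$, and Milman's reverse Brunn--Minkowski inequality likewise, and only after a change of position. Any volume comparison with such a loss yields at best $\ln|S|\ls Cn+\cdots$. That cannot give $C(1+p/u)$ when $p/u\ll n$, for instance $p=n$ and $u=\sqrt n$. Nothing in your sketch uses structure specific to $Z_p(Y)$ and $Z_r(Y)^\circ$ that would avoid this loss.

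So the large-$p$ case genuinely needs the result you list only as a fallback. The paper applies the large-$p$ estimate \cite[Theorem~4.1]{Mendelson-Milman-Paouris} with $q=1$, which gives $\ln\mathsf M\bigl(Z_p(X),Cu\,m_1(\mu,L)L\bigr)\ls 2+p/u$ for $p\gr n$. It converts $m_1(\mu,L)$ to $I_1(X,L)=\mathcal I_r(X)$ via \cite[Lemma~2.1]{Mendelson-Milman-Paouris}, then symmetrizes exactly as you do. If you make that estimate the main argument and drop the volumetric paragraph, your proof coincides with the paper's.
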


\begin{proof}
Suppose first that $X$ is origin symmetric, and let $\mu$ be its law.
The resolution of the slicing problem implies that every marginal of $\mu$ has isotropic constant bounded by an absolute constant.
In the terminology of Mendelson, Milman and Paouris, this means that $\mu$ is $1$-pure.
Their estimate~\cite[Theorem~9.1]{Mendelson-Milman-Paouris}, applied with $q=1$ and $L=Z_r(X)^\circ$, gives
$$ \ln\mathsf M\left(Z_p(X),t\sqrt{\frac pn}\,m_1(\mu,L)L\right) \ls 2+Cn\left(\frac{\ln(e+t)}t\right)^{1/3}, \qquad 1\ls p\ls n. $$
Since $m_1(\mu,L)\simeq I_1(X,L)=\mathcal I_r(X)$, taking $t=u\sqrt{n/p}$ proves~\eqref{eq:MMP-entropy}.
For $p\gr n$, their large $p$ estimate~\cite[Theorem~4.1]{Mendelson-Milman-Paouris}, again with $q=1$, gives
$$ \ln\mathsf M\left(Z_p(X),Cu\,m_1(\mu,L)L\right) \ls 2+\frac pu, $$
and proves~\eqref{eq:large-p-natural-entropy}.

For a centered vector $X$, let $Y=X-X'$, where $X'$ is an independent copy.
The vector $Y$ is symmetric and log-concave, while $Z_q(X)\subseteq Z_q(Y)\subseteq2Z_q(X)$ and $\mathcal I_q(X)\ls \mathcal I_q(Y)\ls4\mathcal I_q(X)$.
In particular, $\mathcal I_r(Y)Z_r(Y)^\circ\subseteq4\mathcal I_r(X)Z_r(X)^\circ$, because $Z_r(X)\subseteq Z_r(Y)$.
Since also $Z_p(X)\subseteq Z_p(Y)$, the symmetric estimates for $Y$ and monotonicity of packing numbers prove both assertions after changing the absolute constant in the packing scale.
\end{proof}

Theorem~\ref{th:polar-centroid-entropy} and Proposition~\ref{prop:MMP-entropy} are complementary.
In the isotropic range $2\ls r\ls n$, our estimate is dimension free and gives
$$ \ln\mathsf M\left(Z_p(X),Cu\mathcal I_r(X)Z_r(X)^\circ\right)\ls Cp\left(\frac r{u^2}+\frac{\sqrt r}{u}\right). $$
The Mendelson--Milman--Paouris estimate has no explicit dependence on $r$ beyond the normalization of the target, but retains the ambient dimension:
$$ C\left[1+n\left(\frac{\ln(e+u\sqrt{n/p})}{u\sqrt{n/p}}\right)^{1/3}\right] $$
when $p\ls n$, and $C(1+p/u)$ when $p\gr n$.
Neither bound dominates the other for all values of $p,r$ and $u$; the first is suited to dimension free control with explicit dependence on the moment scale $r$, while the second can be preferable at large separation scales.

Combining Theorem~\ref{th:polar-centroid-entropy}, Corollary~\ref{cor:centroid-width-entropy} and Proposition~\ref{prop:MMP-entropy}, we obtain, for $2\ls p,r\ls n$ and $u\gr1$,
\begin{align*}
&\ln\mathsf M\left(Z_p(X),Cu\mathcal I_r(X)Z_r(X)^\circ\right)\\
&\quad\ls C\min\left\{ p\left(\frac r{u^2}+\frac{\sqrt r}{u}\right),\; \frac{\sqrt{np\ln(e+p)\ln(e+r)}}{u},\; 1+n\left(\frac{\ln(e+u\sqrt{n/p})}{u\sqrt{n/p}}\right)^{1/3} \right\}.
\end{align*}
At the scale $u=1$, this gives
$$ \ln\mathsf M\left(Z_p(X),C\mathcal I_r(X)Z_r(X)^\circ\right) \ls C\min\left\{ pr,\;\sqrt{np\ln(e+p)\ln(e+r)},\; n^{5/6}p^{1/6}\ln^{1/3}\left(e+\frac np\right) \right\}. $$

%%%%%%%%%%% End of paper body %%%%%%%%%%%%%%%%%%%%%%%%%%%%%%%
\bigskip

\noindent {\bf Acknowledgements.} 
The first named author acknowledges support by a PhD scholarship from the National Technical University of Athens.
The second named author acknowledges support by the Hellenic Foundation for Research and Innovation (H.F.R.I.) under the ``4th Call for H.F.R.I. research projects to support Postdoctoral Researchers'' (Project Number: 28948).
The authors thank Apostolos Giannopoulos for useful discussions.

\bigskip

%%%%%%%%%%%%%%%%%%%%%%%%%%%%%%%%%%%%%%%%%%%%%%%%%%%%%%%%%%%%%%%%%%%%%%%%%
%%%%%%%%%%%%%%%%%%%%%%%%%%%%%%%%%%%%%%%%%%%%%%%%%%%%%%%%%%%%%%%%%%%%%%%%%

\bigskip

\noindent {\bf Keywords:} log-concave random vectors, Sudakov minoration, weak and strong moments, $L_p$-centroid bodies, metric entropy, KLS parameter.

\smallskip

\noindent {\bf 2020 Mathematics Subject Classification:} Primary 60E15; Secondary 52A23, 46B09, 46B07.

\bigskip

\noindent \textsc{Antonios \ Hmadi}: School of Applied Mathematical and Physical Sciences, National Technical University of Athens, Department of Mathematics, Zografou Campus, GR-157 80, Athens, Greece.

\smallskip

\noindent \textit{E-mail:} \texttt{ahmadi@mail.ntua.gr}

\bigskip 

\medskip 

\noindent \textsc{Dimitrios-Marios \ Liakopoulos}: School of Applied Mathematical and Physical Sciences, National Technical University of Athens, 
Department of Mathematics, Zografou Campus, GR-157 80, Athens, Greece.

\smallskip

\noindent \textit{E-mail:} \texttt{dm\_liakopoulos@mail.ntua.gr}

\end{document}